\newcommandx{\unsure}[2][1=]{\todo[linecolor=red,backgroundcolor=red!25,bordercolor=red,#1]{#2}}
\newcommandx{\change}[2][1=]{\todo[linecolor=blue,backgroundcolor=blue!25,bordercolor=blue,#1]{#2}}
\newcommandx{\info}[2][1=]{\todo[linecolor=OliveGreen,backgroundcolor=OliveGreen!25,bordercolor=OliveGreen,#1]{#2}}
\newcommandx{\improvement}[2][1=]{\todo[linecolor=Plum,backgroundcolor=Plum!25,bordercolor=Plum,#1]{#2}}
\newcommandx{\thiswillnotshow}[2][1=]{\todo[disable,#1]{#2}}
\newcommand\numberthis{\addtocounter{equation}{1}\tag{\theequation}}
\newtheorem{theorem}{Theorem}
\newtheorem{assumption}{Assumption}
\newtheorem{condition}{Condition}
\newtheorem{corollary}{Corollary}
\newtheorem{lemma}{Lemma}
\newtheorem{proposition}{Proposition}
\newtheorem{remark}{Remark}
\numberwithin{equation}{section}
\DeclareMathOperator{\diag}{diag}
\newcommand{\calB}{\ensuremath{\mathcal{B}}}
\newcommand{\calC}{\ensuremath{\mathcal{C}}}
\newcommand{\calH}{\ensuremath{\mathcal{H}}}
\newcommand{\calG}{\ensuremath{\mathcal{G}}}
\newcommand{\calP}{\ensuremath{\mathcal{P}}}
\newcommand{\calS}{\ensuremath{\mathcal{S}}}
\newcommand{\calM}{\ensuremath{\mathcal{M}}}
\newcommand{\calN}{\ensuremath{\mathcal{N}}}
\newcommand{\calE}{\ensuremath{\mathcal{E}}}
\newcommand{\calJ}{\ensuremath{\mathcal{J}}}
\newcommand{\norm}[1]{\left\|{#1}\right\|}
\newcommand{\abs}[1]{\left|{#1}\right|}
\newcommand{\ceil}[1]{\lceil{#1}\rceil}
\newcommand{\set}[1]{\left\{{#1}\right\}}
\newcommand{\expec}{\ensuremath{\mathbb{E}}}
\newcommand{\matR}{\ensuremath{\mathbb{R}}}
\newcommand{\prob}{\ensuremath{\mathbb{P}}}
\newcommand{\lt}{l(z^{(t)},z)}
\definecolor{asparagus}{rgb}{0.53, 0.66, 0.42}
\newcommand{\indic}{\ensuremath{\mathbf{1}}} 
\newcommand{\R}{\ensuremath{\mathbb{R}}}
\newcommand{\gpm}{\texttt{GPM}}
\newcommand{\spec}{\texttt{Spec}}
\newcommand{\hl}{\texttt{HL}}
\newcommand{\ov}{\Omega_{i,k}(\delta)}
\newcommand{\zz}{z'}
\newcommand{\rev}[1]{\textcolor{black}{#1}}
\begin{document}

\title{Minimax Optimal Clustering of Bipartite Graphs with a Generalized Power Method}
\author{Guillaume Braun\footnotemark[1]\\ \texttt{guillaume.braun@inria.fr} \and Hemant Tyagi\footnotemark[1]\\  \texttt{hemant.tyagi@inria.fr}}
\date{\today}

\renewcommand{\thefootnote}{\fnsymbol{footnote}}
\footnotetext[1]{Inria, Univ. Lille, CNRS, UMR 8524 - Laboratoire Paul Painlev\'{e}, F-59000 }

\maketitle

\begin{abstract}
    Clustering bipartite graphs is a fundamental task in network analysis. In the high-dimensional regime where the number of rows $n_1$ and the number of columns $n_2$ of the associated adjacency matrix are of different order, existing methods derived from the ones used for symmetric graphs can come with sub-optimal guarantees. Due to increasing number of applications for bipartite graphs in the high dimensional regime, it is of fundamental importance to design optimal algorithms for this setting. The recent work of \cite{Ndaoud2021ImprovedCA} improves the existing upper-bound for the misclustering rate in the special case where the columns (resp. rows) can  be partitioned into $L = 2$ (resp. $K = 2$) communities. Unfortunately, their algorithm cannot be extended to the more general setting where $K \neq L \geq 2$. We overcome this limitation by introducing a new algorithm based on the power method. We derive conditions for exact recovery in the general setting where $K \neq L \geq 2$, and show that it recovers the result in \cite{Ndaoud2021ImprovedCA}. \rev{We also derive a minimax lower bound on the misclustering error when $K=L$ under a symmetric version of our model, which matches the corresponding upper bound up to a factor depending on $K$}.
\end{abstract}

\section{Introduction}

The interactions between objects of two different types can be naturally encoded as a bipartite graph where nodes correspond to objects and edges to the links between the objects of different type. One can find examples of such data in various fields, e.g., interactions between customers and products in e-commerce \citep{customer-product}, interactions between plants and pollinators \citep{plants-pollinators}, investors and assets networks \citep{financial-bipartite}, judges vote predictions \citep{justice} and constraint satisfaction problems \citep{csp}.

Clustering is one of the most important analysis tasks on bipartite graphs aimed at gathering nodes that have similar connectivity profiles. To this end, several methods  have been proposed in the literature, e.g., convex optimization  approaches \citep{convex-bi}, spectral methods \citep{spectral-bi}, modularity function maximization \citep{Beckett2016ImprovedCD},  pseudo-likelihood \citep{optimal-bi} and variational approaches \citep{Keribin2015EstimationAS}. The performance of the algorithms are generally evaluated under the Bipartite Stochastic Block Model (BiSBM), a variant of the Stochastic Block Model (SBM), where the partitions of the rows and the columns are decoupled. In particular, edges are independent Bernoulli random variables with parameters depending only on the communities of the nodes.

When the number of rows $n_1$ (corresponding to type I objects) and the number of columns $n_2$ (corresponding to type II objects) of the adjacency matrix associated to a bipartite graph are of the same order, the BiSBM behaves similarly to the SBM. However, in the high dimensional setting where $n_2 \gg n_1$, classical methods that are applicable when $n_1$ is of the same order as $n_2$ can fail. In particular, when the bipartite graph is very sparse, it becomes impossible to consistently estimate the latent partition of the columns, whereas it is still possible to estimate the latent partition of the rows. Hence, methods based on estimating the latent partitions of both rows and columns will necessarily fail in the high dimensional regime when the bipartite graph is very sparse. However, the high dimensional regime appears in many applications, e.g. hypergraphs where the number of columns corresponds to the number of hyperedges, or in e-commerce where the number of customers could be much smaller than the number of products (or vice-versa). Hence it is important to understand how to design statistically optimal algorithms in this regime.

Recently, the work of \cite{Ndaoud2021ImprovedCA} improved the state-of-the-art conditions for exact recovery of the latent partition of the rows under the BiSBM when $n_2 \gtrsim n_1 \log n_1$. Unfortunately, their method, which can be understood as a generalized power method, uses a centering argument that only works in the special case where there are $K=2$ latent communities for the rows and $L=2$ latent communities for the columns. Moreover, they only consider a  Symmetric BiSBM (SBiSBM) where the edge probabilities can take two values (see Section \ref{sec:stat_frame}). It is not clear how their method can be extended to the more general setting where $K \neq L \geq 2$, and with more general connectivity matrices. To overcome this limitation, we propose a new algorithm -- also based on the generalized power method -- that can be applied to general BiSBMs, and has similar theoretical guarantees when specialized to the setting of \cite{Ndaoud2021ImprovedCA}.

\subsection{Main contributions} Our results can be summarized as follows.
\begin{itemize}
    \item We present a novel iterative clustering method that can be applied to general BiSBMs, unlike the one proposed by \cite{Ndaoud2021ImprovedCA}. We analyze our algorithm under the BiSBM without restrictions on $K$ and $L$, and derive an upper bound on the misclustering error. In particular, we show that our algorithm achieves exact recovery when the sparsity level $p_{max}$ of the graph satisfies $p_{max}^2 = \Omega(\frac{\log n_1}{n_1n_2})$ for fixed $K,L$. This is the same sparsity regime obtained in  \cite{Ndaoud2021ImprovedCA} for the SBiSBM with $K = L = 2$. We remark that our bounds are non-asymptotic and showcase the full dependence on $K,L$. 
    
    \item We derive a minimax lower bound for the misclustering error in the special case of an SBiSBM with $L=K=2$ that matches the corresponding upper bound of our algorithm, and is the first such lower bound for this problem. It completes the work of \cite{Ndaoud2021ImprovedCA} which only shows that an oracle version of their algorithm fails to achieve exact recovery when  $p_{max}^2\leq \epsilon \frac{\log n_1}{n_1n_2}$ where $\epsilon$ is a small enough constant. \rev{We also show that the above analysis extends to the case $K = L \geq 2$, and that the ensuing lower bound matches the upper bound up to a factor depending on $K$.}
    
    \item As part of our analysis, we derive a concentration inequality for matrices with independent centered Binomial entries (see part $3$ of Lemma \ref{lem:conc_mat}) that could be of independent interest.
    
\end{itemize}

 Our findings are complemented by numerical experiments on synthetic data.

\subsection{Related work}
A clustering strategy based on the maximum a posteriori probability (MAP) estimate for discrete weighted bipartite graphs that can encompass the BiSBM as a special case has recently been proposed by \cite{discrete-side-info}. However, their method requires estimating the latent partition of the columns and hence requires the sparsity level of the graph $p_{max}$ to satisfy $p_{max}\gtrsim \frac{\log n_2}{n_1}$. In contrast, we need a far weaker condition $p_{max}\gg \frac{1}{\sqrt{n_1n_2}}$ in the high dimensional regime. This highlights one of the difficulties we face in the high dimensional regime -- while it is impossible to correctly estimate all the model parameters, it is still possible to exactly recover the row partition. A similar phenomenon is known for Gaussian mixture models, see \cite{Ndaoud2018SharpOR}.

Our algorithm design is based on the Generalized Power Method (GPM) which has been applied successfully in various statistical learning problems in recent years. This includes, e.g., group synchronization \citep{boumal2016}, joint alignment from pairwise difference \citep{chen2016_alignment}, graph matching \citep{gpmmatching}, low rank matrix recovery \citep{chi2019} and SBM \citep{Wang2021OptimalNE}.

The work of \cite{Ndaoud2021ImprovedCA} which we extend in the present paper is also based on the GPM. However, there are significant differences between the algorithms. We do not need the centering step used in \citep{Ndaoud2021ImprovedCA} and since we encode the community memberships in a $n_1\times K$ matrix (instead of using the sign), our algorithm can be applied when $K>2$. Our algorithm is closer to the one proposed by \cite{Wang2021OptimalNE} for clustering graphs under the SBM. In contrast to \cite{Wang2021OptimalNE}, we do not add any constraints on the columns, and instead of solving a linear assignment problem, we directly project on the extreme points of the unit simplex. 

The GPM is also related to alternating optimization, a common strategy used to solve non-convex optimization problems in an iterative way. For example, EM-type algorithms \citep{Dempster1977MaximumLF} have been used since decades. In general, these methods are not guaranteed to achieve a global optimum. However, a recent line of research has show that under various statistical models, alternating optimization can actually lead to consistent estimators \citep{ Lu2016StatisticalAC, chen2018_retrieval, Gao2019IterativeAF,Han2020ExactCI, Chen2021OptimalCI, braun2021iterative}. Our proof techniques are based on the work of \cite{braun2021iterative} which itself is based on a general framework developed by \cite{Gao2019IterativeAF}.

\subsection{Notation}

We use lowercase letters ($\epsilon, a, b, \ldots$) to denote scalars and vectors, except for universal constants that will be denoted by $c_1, c_2, \ldots$ for lower bounds, and $C_1, C_2, \ldots $ for upper bounds and some random variables. We will sometimes use the notation $a_n\lesssim b_n$ (or $a_n\gtrsim b_n$ ) for sequences $(a_n)_{n \geq 1}$ and $(b_n)_{n \geq 1}$ if there is a constant $C>0$ such that $a_n \leq C b_n$ (resp. $a_n \geq C b_n$) for all $n$. If the inequalities only hold for $n$ large enough, we will use the notation $a_n=O(b_n)$ (resp. $a_n=\Omega(b_n)$).  If $a_n \lesssim b_n$ (resp. $a_n=O(b_n)$) and $a_n \gtrsim b_n$ (resp. $a_n=\Omega(b_n)$), then we write $a_n \asymp b_n$ (resp. $a_n=\Theta(b_n)$).  

Matrices will be denoted by uppercase letters. The $i$-th row of a matrix $A$ will be denoted as $A_{i:}$. The column $j$ of $A$ will be denoted by $A_{:j}$, and the $(i,j)$th entry by $A_{ij}$. The transpose of $A$ is denoted by $A^\top$ and $A_{:j}^\top$ corresponds to the $j$th row of $A^\top$ by convention. $I_k$ denotes the $k\times k$ identity matrix. For matrices, we use $|| . ||$ and $||.||_F$ to respectively denote the spectral norm (or Euclidean norm in case of vectors) and Frobenius norm.  The number of non-zero entries of $A$ is denoted by $\texttt{nnz}(A)$. The vector of $\R^n$ with all entries equal to one is denoted by $\mathbf{1}_n$. When applied to a vector $x$ of length $K$, $\diag (x)$ will denote the diagonal $K\times K$ matrix formed with the entries of $x$.

\section{The statistical framework} \label{sec:stat_frame}

The BiSBM is defined by the following parameters.\begin{itemize}
    \item A set of nodes of type I, $\calN_1 =[n_1]$, and a set of nodes of type II, $\calN_2 =[n_2]$.
    
    \item A partition of $\calN_1$ into $K$ communities $\calC_1,\ldots, \calC_K$ 
    and a partition of $\calN_2$ into $L$ communities $\calC_1',\ldots, \calC'_L$. 
    
    \item Membership matrices $Z_1 \in \calM_{n_1,K}$ and $Z_2 \in \calM_{n_2,L}$ where $\calM_{n,K}$ denotes the class of membership matrices with $n$ nodes and $K$ communities.  Each membership matrix $Z_1\in \calM_{n_1,K}$ (resp. $Z_2\in \calM_{n_2,L}$) can be associated bijectively with a partition function $z:[n]\to [K]$ (resp. $z':[n]\to [L]$) such that $z(i)=z_i=k$  where $k$ is the unique column index satisfying $(Z_1)_{ik}=1$ (resp. $(Z_2)_{ik}=1$ ). To each matrix $Z_1 \in \mathcal{M}_{n_1,K}$ we can associate a matrix $W$ by normalizing the columns of $Z_1$ in the $\ell_1$ norm: $W=Z_1 D^{-1}$ where $D= Z_1^\top \mathbf{1}_{n_1}$.  This implies that $W^\top Z_1 =I_K= Z_1^\top W$. 
    
    \item A connectivity matrix of probabilities between communities $$\Pi=(\pi_{k k'})_{k\in [K], k'\in [L]} \in [0,1]^{K \times L}.$$ 
\end{itemize}

Let us write $P=(p_{ij})_{i,j \in [n] }:=Z_1\Pi (Z_2)^\top \in [0,1]^{n_1\times n_2}$. A graph $\calG$ is distributed according to BiSBM$(Z_1, Z_2, \Pi)$ if the entries of the corresponding bipartite adjacency matrix $A$ are generated by 
\[ 
A_{ij}  \overset{\text{ind.}}{\sim} \mathcal{B}(p_{ij}), \quad i \in [n_1], \ j \in [n_2],
\] 
where $\calB(p)$ denotes a Bernoulli distribution with parameter $p$. Hence the probability that two nodes are connected depends only on their community memberships. We will frequently use the notation $E$ for the centered noise matrix defined as $E_{ij}=A_{ij}-p_{ij}$, and denote the maximum entry of $P$ by $p_{max} = \max_{i,j} p_{ij}$. The latter can be interpreted as the sparsity level of the graph. We make the following assumptions on the model. 
\begin{assumption}[Approximately balanced communities]\label{ass:balanced_part}
The communities  $\calC_1,\ldots, \calC_K$, 
(resp. $\calC_1',\ldots, \calC'_L$) are approximately balanced, i.e., there exists a constant $\alpha \geq 1$ such that for all $k\in [K]$ and $l\in [L]$ we have 
\[ 
\frac{n_1}{\alpha K}\leq \abs{\calC_k} \leq \frac{\alpha n_1}{K} \text{ and } \frac{n_2}{\alpha L}\leq \abs{\calC'_l} \leq \frac{\alpha n_2}{L}.
\] 
\end{assumption}

\begin{assumption}[Full rank connectivity matrix]\label{ass:eig_low_bd}
The smallest eigenvalue of $\Pi\Pi^\top$, denoted by $\lambda_K(\Pi\Pi^\top)$, satisfies $\lambda_K(\Pi\Pi^\top) \gtrsim p_{\max}^2$ .
\end{assumption}
%
%

\rev{
\begin{remark}\label{rmk:1}
The following remarks are in order.
\begin{itemize}
    \item In the analysis, we will need an upper bound on $\norm{\Pi}$. Note that $\norm{\Pi}\leq c(K,L) p_{max}$ is always satisfied for $c(K,L)=\sqrt{KL}$ since \[ \norm{\Pi}\leq \norm{\Pi}_F\leq \sqrt{KL}p_{max}.\] But in some cases we might have a tighter bound, for e.g., if $\Pi$ is an (approximately) diagonal matrix.
    \item The spectral gap assumption $\lambda_K(\Pi\Pi^\top) \gtrsim p_{\max}^2$ is common in the spectral clustering literature (see for example \cite{lei2015}). However, it should be possible to recover the $K$ communities even if $\Pi$ is rank deficient (for example if one row of $\Pi$ is proportional to another row of $\Pi$ and the gap between these rows is large enough). Unfortunately, except for the isotropic Gaussian mixture setting \citep{loffler21}, it is not known how to analyze the spectral method without the spectral gap assumption.
    \item Instead of using the spectral gap assumption to quantify the signal, we could have used the minimal CH-divergence between the rows of $\Pi \Pi^\top$, similarly to the work of \cite{abbesadon15}. This approach is considerably more technical and would lead to a similar result under the Symmetric BiSBM (defined below).
\end{itemize}
\end{remark}
}

\begin{assumption}[Diagonal dominance]\label{ass:diag_dom} 
There exist $\beta > 0$ and $\eta \geq 1$ such that for all $k' \neq k \in [K]$, we have
\[(\Pi\Pi^\top)_{kk}- \alpha^2(\Pi\Pi^\top)_{kk'}\geq \beta p_{max}^2 \quad \text{ and } \quad (\Pi\Pi^\top)_{kk}- \frac{1}{\alpha^2}(\Pi\Pi^\top)_{kk'} \leq \eta \beta p_{max}^2.\]
\end{assumption}


\begin{remark}
When all communities $\calC'_{l}$ have size equal to $n_2/L$, the first condition in Assumption \ref{ass:diag_dom} simplifies to $(\Pi\Pi^\top)_{kk}- (\Pi\Pi^\top)_{kk'}\geq \beta p_{max}^2$ and corresponds to a diagonal dominance assumption. 
The second condition in Assumption \ref{ass:diag_dom} is useful to show that $\beta$ is (up to a parameter $\eta$ that could depend on $L$ as discussed in the remark below) the parameter that measures the minimum difference between the diagonal and off-diagonal entries of $\Pi\Pi^\top$.
\end{remark}


\paragraph{Symmetric BiSBM.} A particular case of interest is  where $L=K$ and $\Pi = (p-q)I_K +q\indic_{K}\indic_{K}^\top$ where $1\geq p>q\geq 0$ and $q=c p$ for some constant $0 < c < 1$. This model will be referred to as the Symmetric BiSBM, denoted by SBiSBM$(Z_1,Z_2,p,q)$. For $K=2$ this corresponds to the model analyzed in the work of \cite{Ndaoud2021ImprovedCA}. Since we now have (for $k'\neq k$) 
$$(\Pi\Pi^\top)_{kk} = p^2+(K-1)q^2 \text{ and } (\Pi\Pi^\top)_{kk'} = 2qp + (K-2)q^2$$  the following observations are useful to note. 
\begin{itemize}
\item We have $\norm{\Pi}\leq Kp_{max}$, so $c(K,L)=K$.
\item  We have $(\Pi\Pi^\top)_{kk}-(\Pi\Pi^\top)_{kk'} = (p-q)^2 \leq p_{max}^2$, and also $(p-q)^2 \geq (1-c)^2 p_{max}^2$. Hence Assumption \ref{ass:diag_dom} is satisfied for $\eta = \frac{1}{(1-c)^2}$ and $\beta = (1-c)^2$ in the equal-size community case ($\alpha=1$). Additionally, $\lambda_K(\Pi\Pi^\top) = (1-c)p_{\max}$.

\item In the unequal-size case, we need to show choices of $\beta, \eta$ under which Assumption \ref{ass:diag_dom} holds. To this end, note that
\begin{equation*}
  (\Pi\Pi^\top)_{kk}-\alpha^2(\Pi\Pi^\top)_{kk'} = p_{\max}^2 ((1-c)^2 - (\alpha^2-1)K),  
\end{equation*}
and 
\begin{equation*}
  (\Pi\Pi^\top)_{kk}-\frac{1}{\alpha^2}(\Pi\Pi^\top)_{kk'} = p_{\max}^2 ((1-c)^2 - (\frac{1}{\alpha^2}-1)K)\leq p_{max}^2((1-c)^2 + (\alpha^2-1)K).  
\end{equation*}
Hence if, for instance, $\alpha^2 = 1+\frac{(1-c)^2}{2K}$, we see that Assumption \ref{ass:diag_dom} is satisfied for $\beta = \frac{(1-c)^2}{2}$ and $\eta = 3$.
%
\end{itemize}

\begin{remark}
One of the parameters $\beta$ or $\eta$ can scale with $L$ as shown in the following examples. Assume $L$ is even and $\alpha=1$. Let us consider $\mathbf{p}=(p_1, \ldots ,p_{L/2})$, $\mathbf{q}=(q_1, \ldots ,q_{L/2})$ where $p_l>q_l$ for all $l\leq L/2$ and
\[\Pi=
\begin{pmatrix}
\mathbf{p}& \mathbf{q} \\
\mathbf{q}& \mathbf{p} 
\end{pmatrix}\in [0,1]^{2\times L}.
\]
 Then \[ L\max_l(p_l-q_l)^2\geq \min_{k\neq k'}(\Pi\Pi^\top)_{kk}-(\Pi\Pi^\top)_{kk'}=\norm{p-q}^2\geq L\min_l(p_l-q_l)^2,\]  hence $\beta = \Theta (L)$ and $\eta=\Theta(1)$. Let us define $\mathbf{q'}=(q_1',q_2',q_3, \ldots, q_{L/2})$ for $q_1'=0.5q_1$ and $q_2'=0.5q_2$ If we now consider \[\Pi=
\begin{pmatrix}
\mathbf{p}& \mathbf{q} \\
\mathbf{q}& \mathbf{p} \\
\mathbf{q}'& \mathbf{p} \\
\end{pmatrix} \in [0,1]^{3\times L}.
\]
where $q_i$ and $p_i$ are as before and $q'_1\neq q_1$, $q'_2\neq q_2$. Then it is easy to check that $\max_{k\neq k'}(\Pi\Pi^\top)_{kk}-(\Pi\Pi^\top)_{kk'} \leq Lp_{max}^2$ but \[ \min_{k\neq k'}(\Pi\Pi^\top)_{kk}-(\Pi\Pi^\top)_{kk'} = (\Pi\Pi^\top)_{22}-(\Pi\Pi^\top)_{23}= \frac{q_1^2+q_2^2}{2}\] is independent of $L$. Hence $\beta=\Theta(1)$ and $\eta=\Theta(L)$.
\end{remark}

\paragraph{Error criterion.} The \textbf{misclustering rate} associated to an estimated partition $\hat{z}$ is defined by \begin{equation}
 \label{eq:def_misclust}   
 r(\hat{z},z)=\frac{1}{n}\min _{\pi \in \mathfrak{S}}\sum_{i\in [n]} \indic_{\lbrace \hat{z}(i)\neq \pi(z(i))\rbrace},\end{equation} where $\mathfrak{S}$ denotes the set of permutations on $[K]$. It corresponds to the proportion of wrongly assigned nodes labels.
We say that we are in the \textbf{exact recovery} regime if $r(\hat{Z},Z)=0$ with probability $1-o(1)$ as $n$ tends to infinity. If $\prob(r(\hat{Z},Z)=o(1))=1-o(1)$ as $n$ tends to infinity then we are in the \textbf{weak consistency} or \textbf{almost full recovery regime}. A more complete overview of the different types of consistency and the sparsity regimes where they occur can be found in \cite{AbbeSBM}.

\begin{remark}
As already mentioned in the introduction, we will focus in this work on the regime where $n_2\gg n_1 \log n_1$ and $\sqrt{n_1n_2}p_{max}\gtrsim \sqrt{\log n_1}$. In this parameter regime there is no hope to accurately recover $Z_2$ because the columns of $A$ are too sparse. Indeed, consider the setting where $\sqrt{n_1n_2}p_{max} \asymp \sqrt{\log n_1}$. Then in expectation, the sum of the entries of each column is $n_1p_{max} \asymp \sqrt{\frac{n_1\log n_1}{n_2}} \to 0$. But by analogy to the SBM, we would need a condition $n_1p_{max}\to \infty$ in order to recover $Z_2$. This is actually a necessary condition obtained in a related setting by \cite{discrete-side-info}. While it is impossible to estimate $Z_2$ in this sparsity regime, it is still possible to accurately estimate $Z_1$. We will focus on this problem from now onwards.
\end{remark}

\section{Algorithm}
\subsection{Initialization with a spectral method}
We can use a spectral method on the hollowed Gram matrix $B$ to obtain a first estimate of the partition $Z_1$. This is similar to the algorithm in \cite{florescu16}; we only use a different rounding step so the algorithm can be applied to general bipartite graphs with $K>2$ communities, in contrast to \cite{florescu16}.

\begin{algorithm}[hbt!]
\caption{Spectral method on $\calH(AA^\top)$ (\spec)}\label{alg:spec}
\begin{flushleft}
        \textbf{Input:} The number of communities $K$ and the adjacency matrix $A$.
\end{flushleft}
        \begin{algorithmic}[1]
        \State Form the diagonal hollowed Gram matrix $B:=\calH(AA^\top) $ where $\calH(X)=X-\diag(X)$.
       \State Compute the matrix $U\in \R^{n_1\times K}$ whose columns correspond to the top $K$-eigenvectors of $B$.
       \State Apply approximate $(1+\bar{\epsilon})$ approximate $\texttt{k-means}$ on the rows of $U$ and obtain a partition $z^{(0)}$ of $[n_1]$ into $K$ communities.
       \end{algorithmic}
 \textbf{Output:} A partition of the nodes $z^{(0)}$.
\end{algorithm}

\paragraph{Computational complexity of \spec.} The cost for computing $B$ is $O(n_1\texttt{nnz}(A))$  
\renewcommand{\thefootnote}{\arabic{footnote}}
and for $U$ is\footnote{The $\log n_1$ term comes from the number of iterations needed when using the power method to compute the largest (or smallest) eigenvector of a given matrix. 
} $O(n_1^2K\log n_1)$.
 Applying the $(1+\bar{\epsilon})$ approximate $\texttt{k-means}$ has a complexity $O(2^{(K/\bar{\epsilon})^{O(1)}}n_1K)$, see \cite{approxkmeans}. In practice, this operation is fast and the most costly operation is the computation of $B$ which has complexity $O(n_1^2n_2p_{max})$, since one can show that $\texttt{nnz}(A)=O(n_1n_2p_{max})$ with high probability.

\subsection{Iterative refinement with GPM}
Our algorithm is based on the Generalized Power Method. In contrast to the power method proposed recently by \cite{Ndaoud2021ImprovedCA}, we do not require to center the adjacency matrix $A$ and, instead of using the sign to identify the communities, we project on $\calM_{n_1,K}$.  Consequently, our algorithm can be applied to bipartite graphs with $K>2$ and $K\neq L$ while \cite{Ndaoud2021ImprovedCA} require $K = L = 2$.

In the first step we form the diagonal hollowed Gram matrix $B$. This is natural since in the parameter regimes we are interested in, there is no hope to consistently estimate $Z_2$. Then, we iteratively update the current estimate of the partition $Z_1^{(t)}$ by a row-wise projection of $B Z_1^{(t)}$ onto the extreme points of the unit simplex of $\R^K$ denoted by $\calS_K$. The  projection operator $\calP: \matR^K \rightarrow \calS_K$ is formally defined by 
\begin{equation}\label{def:operator}
    \calP (x):= \arg\min_{y\in \calS_K} \norm{x-y} \text{ for all }x\in \R^K.
\end{equation}
This implies that $(Z_1^{(t+1)})_{ik}=1$ for the value of $k$ that maximizes $B_{i:}Z_{:k}^{(t)}$. Ties are broken arbitrarily.  

\begin{algorithm}[hbt!]
\caption{Generalized Power Method (\gpm)}\label{alg:gpm}
\begin{flushleft}
        \textbf{Input:} Number of communities $K$, adjacency matrix $A$, an estimate of the row partition $\hat{Z}^{(0)}_1$ and,  number of iterations $T$.
\end{flushleft}
        \begin{algorithmic}[1]
        \State Form the diagonal hollowed Gram matrix $B:=\calH(AA^\top) $ where $\calH(X)=X-\diag(X)$ and compute $W^{(0)}=(\hat{Z}_1^{(0)})^\top (D^{(0)} )^{-1}$ where $D^{(0)}=\diag( (\hat{Z}_1^{(0)})^\top\mathbf{1}_{n_1})$.
        \For{$0\leq t \leq T$}
            \State Update the partition $Z_1^{(t+1)}=\calP (BW^{(t)})$ where $\calP$ is the operator in \eqref{def:operator} applied row-wise.
            
            \State Compute $D^{(t+1)}=\diag ((Z_1^{(t+1)})^\top\mathbf{1}_{n_1})$.
            Then form $W^{(t+1)}=(Z_1^{(t+1)})^\top (D^{(t+1)} )^{-1}.$ 
        \EndFor
        \end{algorithmic}
 \textbf{Output:} A partition of the nodes $Z_1^{(T+1)}$.
\end{algorithm}

\paragraph{Computational complexity of \gpm.} As mentioned earlier, the cost for computing $B$ is $O(n_1\texttt{nnz}(A))$. For each $t$, the cost of computing $BW^{(t)}$ is $O(n_1^2K)$ and the cost of the projection is $O(K n_1)$.  The cost of computing $D^{(t+1)}$ and $W^{(t+1)}$ is $O(Kn_1)$.
So the total cost over $T$ iterations is $O(Tn_1^2K)$ and doesn't depend on $n_2$. 

\begin{remark}The recent work of \cite{Wang2021OptimalNE} proposed a power method for clustering under the SBM, but instead of using a projection on the simplex as we did, they add an additional constraint on the column on $Z$ so that each cluster has the same size. They showed that computing this projection is equivalent to solving a linear assignment problem (LAP). But the cost of solving this LAP is $O(K^2n_1\log n_1)$ whereas the cost of the projection on the simplex is $O(Kn_1)$. Moreover, their algorithm requires to know in advance the size of each cluster and it is not straightforward to extend their theoretical guarantees to the approximately balanced community setting.
\end{remark}

\section{Spectral initialization}
We show that Algorithm \ref{alg:spec} can recover an arbitrary large proportion of community provided that the sparsity level $p_{\max}$ is sufficiently large, and $n_2/n_1 \rightarrow \infty$ sufficiently fast, as $n_1 \rightarrow \infty$.


\begin{proposition}\label{prop:spec}
Assume that $A \sim BiSBM(Z_1,Z_2,\Pi)$ and Assumptions \ref{ass:balanced_part},  \ref{ass:eig_low_bd} are satisfied. For any $0 < \varepsilon < 1$, suppose additionally that 
\begin{align} 
    \frac{n_2}{n_1} 
    &\gtrsim \max \left(L \log n_1, \rev{\frac{\log^2 n_2}{\log n_1}} \right), \quad p_{\max}^2 \geq \rev{\alpha^{2.5}} \frac{KL \log n_1}{\varepsilon n_1n_2}, \label{eq:n12_pmax_conds_spec} \\
    \rev{\alpha^3}\frac{KL}{\log n_1} &= o(1), \quad \rev{\alpha^{12}}\frac{(KL)^3}{\frac{n_2}{n_1} \log n_1} = o(1) \quad \text{ as } n_1 \rightarrow \infty. \label{eq:n12_KL_conds_spec}
\end{align}
Then the output $z^{(0)}$ of Algorithm \ref{alg:spec} satisfies  with probability at least $1 - n^{-\Omega(1)}$ the bound 
$r(z^{(0)},z) \lesssim K\varepsilon^2.$
\end{proposition}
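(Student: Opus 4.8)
The plan is to follow the standard two-step strategy for analyzing spectral clustering: (i) show that the hollowed Gram matrix $B = \calH(AA^\top)$ is close in operator norm to a ``population'' matrix whose top-$K$ eigenspace exactly encodes the row partition, and (ii) convert this operator-norm bound into a bound on the $\ell_2$-distance between the rows of the empirical eigenvector matrix $U$ and the corresponding population eigenvectors, then use the $(1+\bar\epsilon)$-approximate $\texttt{k-means}$ guarantee to control misclustering. Write $A = P + E$ with $P = Z_1 \Pi Z_2^\top$ and $E$ the centered noise. Then $AA^\top = PP^\top + PE^\top + EP^\top + EE^\top$, and $PP^\top = Z_1 \Pi (Z_2^\top Z_2) \Pi^\top Z_1^\top$. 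The diagonal of $EE^\top$ has entries $\sum_j E_{ij}^2$ of order $n_2 p_{\max}$, which is exactly why the hollowing step $\calH(\cdot)$ is needed: subtracting $\diag(AA^\top)$ removes the leading-order diagonal bias. Define the population target as $M := \calH(PP^\top)$ (or, more precisely, $PP^\top$ minus its diagonal); since $P = Z_1 \Pi Z_2^\top$ and $Z_2^\top Z_2 = \diag(|\calC'_1|,\dots,|\calC'_L|)$, the matrix $PP^\top = Z_1 G Z_1^\top$ where $G = \Pi (Z_2^\top Z_2) \Pi^\top$ is $K\times K$; its column space is spanned by the indicator vectors of $\calC_1,\dots,\calC_K$, and under Assumptions \ref{ass:balanced_part} and \ref{ass:eig_low_bd} its nonzero eigenvalues are of order $n_2 p_{\max}^2 / L$ (the factor $\lambda_K(\Pi\Pi^\top) \gtrsim p_{\max}^2$ times $n_2/(\alpha L)$), with a spectral gap of the same order below the $(K+1)$-th eigenvalue (which is $0$ for $PP^\top$).

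First I would establish the operator-norm bound $\norm{B - M} \lesssim \delta$ with high probability, where $\delta$ collects the contributions of the cross terms $PE^\top + EP^\top$ and $\calH(EE^\top)$. For the cross term, $\norm{PE^\top} \leq \norm{P}\norm{E}$; one has $\norm{P} \lesssim n_1 p_{\max} \sqrt{n_2/n_1} \cdot (\text{stuff})$ more carefully $\norm{P} \asymp \sqrt{n_1 n_2} p_{\max}$ up to $K,L$ factors, and $\norm{E} \lesssim \sqrt{n_2 p_{\max}}$ by standard bounds for sparse random matrices (Bandeira–van Handel / Le–Levina–Vershynin), valid provided $n_2 p_{\max} \gtrsim \log n_2$, which is implied by \eqref{eq:n12_pmax_conds_spec} and \eqref{eq:n12_KL_conds_spec}. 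For the centered quadratic term $\calH(EE^\top)$, I would invoke the concentration inequality for matrices with independent centered Binomial entries advertised as part 3 of Lemma \ref{lem:conc_mat}; this should give $\norm{\calH(EE^\top)} \lesssim \sqrt{n_1 n_2} p_{\max}$ (up to logarithmic and $K,L$ factors), i.e. the same order as the cross term, which is the dominant error. Then I would check that under the stated hypotheses this $\delta$ is a small constant fraction of the eigengap $\gamma \asymp n_2 p_{\max}^2/(\alpha L)$: indeed $\delta/\gamma \lesssim \sqrt{n_1 n_2} p_{\max} / (n_2 p_{\max}^2 / L) = L\sqrt{n_1/n_2}/(\sqrt{n_2} p_{\max}) \cdot \sqrt{n_2} = L \sqrt{n_1/(n_2 p_{\max}^2 n_2)}\cdot n_2^{1/2}$... the point is that $\delta^2/\gamma^2 \lesssim L^2 n_1 n_2 p_{\max}^2/(n_2^2 p_{\max}^4) = L^2 n_1/(n_2 \cdot n_1 n_2 p_{\max}^2/n_1) $, and feeding in $p_{\max}^2 \gtrsim KL\log n_1/(\varepsilon n_1 n_2)$ together with $n_2/n_1 \gtrsim L\log n_1$ makes $\delta/\gamma$ go to zero at a rate controlled by $\varepsilon$.

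Next I would apply the Davis–Kahan $\sin\Theta$ theorem: if $U \in \R^{n_1\times K}$ holds the top-$K$ eigenvectors of $B$ and $U^* \in \R^{n_1\times K}$ those of $M$, then there is an orthogonal $O \in \R^{K\times K}$ with $\norm{U - U^* O}_F \lesssim \sqrt{K}\,\delta/\gamma$. Since $M = Z_1 G Z_1^\top$ has eigenvectors that are constant on communities, $U^* O = Z_1 V$ for some $V \in \R^{K\times K}$ whose rows (corresponding to distinct communities) are separated by $\gtrsim 1/\sqrt{n_1/(\alpha K)} \asymp \sqrt{K/n_1}$ in $\ell_2$, up to constants. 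A standard argument (e.g. Lei–Rinaldo, or Lemma-type results in \cite{braun2021iterative}) then shows that the number of rows of $U$ that a $(1+\bar\epsilon)$-approximate $\texttt{k-means}$ solution can misassign is at most $O(1) \cdot \norm{U - Z_1 V}_F^2 / (\text{min squared center separation})$, i.e. at most $O\big(K \cdot (\delta/\gamma)^2 / (K/n_1)\big) = O\big(n_1 (\delta/\gamma)^2\big)$ nodes; dividing by $n_1$ gives $r(z^{(0)},z) \lesssim (\delta/\gamma)^2 \lesssim K\varepsilon^2$ once the explicit bounds from the previous paragraph are substituted and the $\varepsilon$-dependence is tracked (the $K$ factor arising from the loss in the $\texttt{k-means}$-to-misclustering conversion and from the $\norm{\cdot}_F$ versus $\norm{\cdot}$ discrepancy).

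The main obstacle is the operator-norm control of the centered quadratic term $\norm{\calH(EE^\top)}$ in the sparse regime $p_{\max}$ as small as $\sqrt{\log n_1/(n_1 n_2)}$: naive matrix Bernstein or $\epsilon$-net arguments lose polylogarithmic or $\sqrt{n_1}$ factors that would destroy the claimed $\varepsilon$-dependence, which is precisely why the paper isolates part 3 of Lemma \ref{lem:conc_mat} as a contribution of independent interest. Everything else — the eigenvalue/eigengap computation for $M$ under Assumptions \ref{ass:balanced_part}–\ref{ass:eig_low_bd}, Davis–Kahan, and the $\texttt{k-means}$ rounding bound — is routine, though one must be careful to carry the $K$ and $L$ factors and the $\varepsilon$ through each step, and to verify that the side conditions \eqref{eq:n12_KL_conds_spec} are exactly what is needed to absorb lower-order terms (e.g. $\norm{E}^2 \lesssim n_2 p_{\max}$ from $EE^\top$ off-diagonal, and the $(KL)^3$ term to control the fluctuation of community sizes and the noise in $G$).
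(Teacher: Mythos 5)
Your overall strategy mirrors the paper's (decompose $B = \calH(PP^\top) + \text{noise}$, bound the noise in operator norm, apply Davis--Kahan, then round via the Lei--Rinaldo $\texttt{k-means}$ argument), but there is a genuine quantitative gap in how you control the cross term, and it is not cosmetic.

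You bound $\norm{PE^\top}$ by $\norm{P}\norm{E}$ with $\norm{E} \lesssim \sqrt{n_2 p_{\max}}$. This is too loose in precisely the regime the proposition cares about. Writing the eigengap correctly as $\gamma \asymp n_1 n_2 p_{\max}^2/(KL)$ (not $n_2 p_{\max}^2/L$; you are conflating $\lambda_K(G)$ for $G = \Pi Z_2^\top Z_2 \Pi^\top$ with $\lambda_K(Z_1 G Z_1^\top)$, which picks up an extra $\lambda_K(Z_1)^2 \asymp n_1/K$), the contribution of your cross-term bound to $\delta/\gamma$ is of order $\sqrt{KL}/\sqrt{n_1 p_{\max}}$. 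At the lower edge $p_{\max} \asymp \sqrt{KL\log n_1/(\varepsilon n_1 n_2)}$ with $n_2/n_1$ large (say $n_2 = n_1(\log n_1)^{10}$), $n_1 p_{\max} \to 0$, so this ratio diverges; the conditions \eqref{eq:n12_pmax_conds_spec}--\eqref{eq:n12_KL_conds_spec} do not tame it. The paper avoids this by exploiting the low-rank structure of $P$: write $PE^\top = Z_1 \Pi (E Z_2)^\top$ so that $\norm{PE^\top} \leq \norm{\Pi^\top Z_1^\top}\,\norm{E Z_2}$, and then use the sharp bound $\norm{E Z_2} \lesssim \sqrt{n_1 n_2 p_{\max}/L}$. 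This is exactly part 3 of Lemma \ref{lem:conc_mat} --- the ``centered Binomial entries'' concentration the paper advertises --- which you instead attributed (incorrectly) to the control of $\calH(EE^\top)$; the latter is part 2 of the lemma and gives $\norm{\calH(EE^\top)} \lesssim \max(\log n_1,\sqrt{n_1 n_2}\,p_{\max})$. With the $Z_1\Pi(EZ_2)^\top$ factorization, the cross-term contribution to $\delta/\gamma$ becomes $KL/\sqrt{n_2 p_{\max}}$, which is $o(1)$ under \eqref{eq:n12_KL_conds_spec}. So the missing idea is to bound $EZ_2$ directly rather than $E$, and that is where the new concentration inequality is actually used. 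You should also record the diagonal discrepancy $\norm{\calH(PP^\top) - PP^\top} = \norm{\diag(PP^\top)} \lesssim \alpha n_2 p_{\max}^2$ explicitly, since it is one of the four terms in \eqref{eq:dk_thm_bd} and is dominated only because $\gamma \gtrsim n_1 n_2 p_{\max}^2/(KL)$ carries the extra factor of $n_1/(KL)$.
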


\begin{proof}
First, we will control the noise $\norm{B-PP^\top}$ using Lemma \ref{lem:conc_mat}. Note that the conditions in \eqref{eq:n12_pmax_conds_spec} imply the conditions on $n_1,n_2$ and $p_{\max}$ in Lemma \ref{lem:conc_mat}. Since $\calH(\cdot)$ is a linear operator and $B = \calH(B)$, we have the decomposition 
\begin{align*}
    B = \calH(PP^\top) + \calH(PE^\top) + \calH(EP^\top) + \calH(EE^\top),
\end{align*}
which leads to the following bound by triangle inequality
\begin{align}
    \norm{B - PP^\top} 
    &\leq \norm{\calH(PE^\top) + \calH(EP^\top) } + \norm{\calH(EE^\top)} + \norm{\calH(PP^\top) - PP^\top} \nonumber \\
    &\leq 4\norm{E Z_2}\norm{\Pi^\top Z_1^\top}+ \norm{\calH(EE^\top)} + \norm{\calH(PP^\top) - PP^\top}. \label{eq:B_pp_bd1}
\end{align}
Now let us observe that 
\begin{align*}
 \norm{\calH(PP^\top) - PP^\top} &= \norm{\calH(Z_1\Pi Z_2^\top Z_2\Pi^\top Z_1^\top) - Z_1 \Pi Z_2^\top Z_2\Pi^\top Z_1^\top} \\
 &=  \norm{\diag(Z_1 \Pi Z_2^\top Z_2\Pi^\top Z_1^\top)} \\
 &\leq \frac{\alpha n_2}{L} (L p_{\max}^2) \\
 &\leq  \alpha n_2 p_{\max}^2, 
\end{align*}
while,
\begin{equation*}
  \norm{\Pi^\top Z_1^\top} \leq \norm{Z_1} \norm{\Pi} \leq \rev{\sqrt{\frac{\alpha n_1}{K}} \norm{\Pi}_F \leq \sqrt{\alpha n_1 L} p_{\max}}. \tag{\rev{by the upper bound in Remark \ref{rmk:1}}}
\end{equation*}
Using these bounds along with the bounds on $\norm{E Z_2}$ and $\norm{\calH(EE^\top)}$ from Lemma \ref{lem:conc_mat}, and applying them in \eqref{eq:B_pp_bd1}, it holds with probability at least $1-n^{-\Omega(1)}$ that
\begin{align*}
    \norm{B - PP^\top} \lesssim \max(\log n_1,  \sqrt{n_1 n_2} p_{\max}) + \rev{\alpha}\sqrt{n_2} n_1 p_{\max}^{1.5} + \rev{\alpha} n_2 p_{\max}^2.
\end{align*}
%
%
%
%
Let us denote by $\hat{U}$ (resp. $U$) to be the matrix of top-$K$ eigenvectors of $B$ (resp. $PP^\top$). Denoting $\lambda_K(PP^\top)$ to be the $K$th largest eigenvalue of $PP^\top$, it is not difficult to verify that 
$$\rev{\lambda_K(PP^\top) \gtrsim  \lambda_L(Z_2)^2 \lambda_K(Z_1)^2 \lambda_K(\Pi)^2 \gtrsim \frac{n_1 n_2p_{max}^2}{\alpha^2 KL}.}$$ 
Then by the Davis-Kahan Theorem \citep{Yu2014AUV}, there exists an orthogonal matrix $Q\in \R^{K\times K}$ such that
\begin{align} 
\norm{\hat{U}-UQ}\lesssim \frac{\norm{B-PP^\top}}{\lambda_K(PP^\top)} \lesssim \rev{\alpha^2}\frac{KL \log n_1}{n_1n_2 p_{\max}^2} + \rev{\alpha^2} \frac{KL}{\sqrt{n_1n_2}p_{\max}} + \rev{\alpha^3}\frac{KL}{\sqrt{n_2p_{\max}}} + \rev{\alpha^3} \frac{KL}{n_1}, \label{eq:dk_thm_bd}
\end{align}
where we bounded the $\max$ operation by the sum. Now on account of the conditions in \eqref{eq:n12_pmax_conds_spec} and  \eqref{eq:n12_KL_conds_spec}, it is easily seen that $\rev{\alpha^2}\frac{KL \log n_1}{n_1n_2 p_{\max}^2} \lesssim \varepsilon$, while the remaining three terms in the RHS of \eqref{eq:dk_thm_bd} are $o(1)$ as $n_1 \rightarrow \infty$.
%
Finally, we conclude by using the same argument as in the proof of Theorem $3.1$ in \cite{lei2015} to show that \[ 
r(\hat{z},z) \lesssim \norm{\hat{U}-UQ}_F^2 \lesssim K \norm{\hat{U}-UQ}^2 \lesssim  K\varepsilon^2.
\]
\end{proof}


\begin{remark}
As shown in Section \ref{sec:xp}, \spec\, has very good empirical performance. This suggests that the previous proposition is far from being optimal and doesn't capture the true rate of convergence of this spectral method. Also note that Proposition \ref{prop:spec} gives a meaningful bound only when $\varepsilon =O(\frac{1}{\sqrt{K}})$.  
\end{remark}

\section{Analysis of \gpm}
Our analysis strategy is similar to the one recently considered by \cite{braun2021iterative} for clustering under the contextual SBM, which in turn  is based on the framework recently developed by \cite{Gao2019IterativeAF}. There are however some additional technical difficulties due to the fact that there are more dependencies in the noise since the matrix $B$ is a Gram matrix.

We will assume w.l.o.g. that $\pi^*$ the permutation that best aligns $z^{(0)}$ with see (see equation \eqref{eq:def_misclust}) is the identity, if not, then replace $z$ by $(\pi^*)^{-1}(z)$. Hence there is no label switching ambiguity in the community labels of $z^{(t)}$ because they are determined from $z^{(0)}$. 

First we will decompose the event  ``after one refinement step, the node $i$ will be incorrectly clustered given the current estimation of the partition $z^{(t)}$ at time $t$" into an event independent of $t$, and events that depend on how close $z^{(t)}$ is from $z$. Then we will analyze these events separately. Finally, we will use these results to show that the error contracts at each step.

\subsection{Error decomposition}
By definition, a node $i$ is misclustered at step $t+1$ if there exists a $k \neq z_i \in [K]$ such that \begin{equation}\label{eq:mis_cond}
     B_{i:}W_{:k}^{(t)}\geq B_{i:}W_{:z_i}^{(t)}.
\end{equation}
By decomposing $B$ as \[ B =\underbrace{\calH( PP^\top)}_{\Tilde{P}}+\underbrace{\calH(EP^\top+PE^\top +EE^\top)}_{\Tilde{E}}\] one can show that condition \eqref{eq:mis_cond} is equivalent to 
\begin{equation}\label{eq:mis_cond2}
    \Tilde{E}_{i:}(W_{:z_i}-W_{:k})\leq -\Delta^2(z_i,k)+F_{ik}^{(t)}+G_{ik}^{(t)}
\end{equation}  
where 
\begin{align*}
    \Delta^2(z_i,k) & = \Tilde{P}_{i:}(W_{:z_i}-W_{:k}),\\
    F_{ik}^{(t)}&=\langle\Tilde{E}_{i:}(W^{(t)}-W),e_{z_i}-e_{k} \rangle, \\
    \text{and }G_{ik}^{(t)}&= \langle\Tilde{P}_{i:}(W^{(t)}-W),e_{z_i}-e_{k} \rangle.
\end{align*}
Here  $e_1, \ldots, e_K$ denotes the canonical basis of $\R^{K}$. The terms $F_{ik}^{(t)}$ and $G_{ik}^{(t)}$ can be interpreted as error terms (due to $W^{(t)}\neq W$) while $\Delta^2(z_i,k)$ corresponds to the signal. Indeed, let us denote $Q=\Pi (Z_2^\top Z_2) \Pi^\top$. Then we obtain $PP^\top = Z_1 Q Z_1^\top$ which implies
\begin{align*}
    \Delta^2(z_i,k) =  \Tilde{P}_{i:}(W_{:z_i}-W_{:k})&= Q_{z_iz_i}-Q_{z_ik}-(PP^\top)_{ii}(W_{iz_i}-W_{ik})\\
     &=(1-\frac{K}{n_1})Q_{z_iz_i}-Q_{z_ik}. 
\end{align*}
%

We have for all $k, k'\in [K]$ that \[\frac{n_2}{\alpha L}(\Pi\Pi^\top)_{kk'}\leq  Q_{kk'}\leq  \frac{\alpha n_2}{L}(\Pi\Pi^\top)_{kk'}\]
 which implies  \[\frac{n_2}{ \alpha L}\left((\Pi\Pi^\top)_{z_iz_i}-\alpha^2(\Pi\Pi^\top)_{z_ik}\right) \leq \Delta^2(z_i,k) \leq \frac{\alpha n_2}{  L}\left((\Pi\Pi^\top)_{z_iz_i}-\frac{1}{\alpha^2}(\Pi\Pi^\top)_{z_ik}\right) \]  for $n_1$ large enough. By Assumption \ref{ass:diag_dom} this implies  
\begin{equation}\label{eq:delta_zik}
    \beta \frac{n_2p_{max}^2}{\alpha L} \leq \Delta^2(z_i,k) \leq  \eta\beta \frac{\alpha n_2p_{max}^2}{L}.
\end{equation} 
when $n_1$ is large enough.

\subsection{Oracle error}
We want to show that the condition \eqref{eq:mis_cond2} cannot occur with high probability. First we will show that this is indeed the case when we ignore the $F$ and $G$ error terms; the subsequent error will be referred to as the oracle error 
\[ \xi(\delta) = \sum_{i=1}^{n_1}\sum_{k\in [K]\setminus z_i}\Delta^2(z_i,k)\indic_{\ov } \text{ for } \delta \in (0,1)
\] 
where
\[\ov = \left\lbrace  \tilde{E}_{i:}(W_{:z_i}-W_{:k}) \leq -(1-\delta)\Delta^2(z_i,k) \right\rbrace .\]
Let us denote 
\[\Delta_{min}=\min_{a\neq b \in [K]}\Delta(a,b)\] 
%
to be the minimal separation of the parameters associated with the different communities.  We will also denote by
\[ \tilde{\Delta}^2 := \frac{ \beta^2}{12eL\alpha ^3}\frac{n_1n_2p_{max}^2}{KL}, \]
%
to be the approximate signal-to-noise ratio (SNR) associated with the model.

%
%
In general the rate of decay of $\xi(\delta)$ leads to the convergence rate of iterative refinement algorithms, hence it is important to control this quantity by showing that the following condition is satisfied. Let us define $$\tau^{(0)}=\epsilon' \delta \min(1, \beta^2) \frac{n_1\Delta_{min}^2}{K}$$  for a small enough constant $\epsilon'>0$. This parameter will be referred to as the minimal error required for the initial estimate of our algorithm. 

\begin{condition}[oracle error]\label{cond:ideal} Assume that there exists $\delta\in (0,1)$ such that \[ \xi (\delta) \leq \frac{3}{4}\tau^{(0)} \] holds with probability at least $1-\eta_1$.
\end{condition}

\subsection{Contraction of the error at each step}
 Let 
 \[ l(z,z') = \sum_{i\in [n]} \Delta^2(z_i,z'_i)\indic_{\lbrace z_i\neq z_i'\rbrace } 
 \] 
 be a measure of the distance between two partitions $z, z'\in [K]^n$. We want to show that $\lt$ decreases until reaching the oracle error. To this end, we will need to control the noise level. In particular, we are going to show that the following two conditions are satisfied. 
 
\begin{condition}[F-error type]\label{cond:f}
Assume that \[ \max_{\lbrace z^{(t)}: l(z,z^{(t)})\leq \tau^{(0)} \rbrace}\sum_{i=1}^n\max_{b\in [K]\backslash z_i} \frac{(F_{ib}^{(t)})^2}{\Delta^2(z_i,b)l(z,z^{(t)})} \leq \frac{\delta^2}{256}\] holds with probability at least $1-\eta_2$.
\end{condition}

\begin{condition}[G-error type]\label{cond:g}
Assume that \[ \max_{i\in [n]}\max_{b \in [K]\setminus z_i} \frac{|G_{ib}^{(t)}|}{\Delta^2  (z_i,b)} \leq \frac{\delta}{4} \] holds uniformly on the event $\lbrace z^{(t)}: l(z,z^{(t)})\leq \tau^{(0)} \rbrace$  with probability at least $1-\eta_3$ .
\end{condition}

Under these conditions, we can show that the error contracts at each step. 

\begin{theorem}\label{thm:gao_ext}  Assume that $ l(z^{(0)},z) \leq \tau^{(0)}$.
Additionally assume that Conditions \ref{cond:ideal}, \ref{cond:f}, and \ref{cond:g} hold. Then with probability at least $1-\sum_{i=1}^3\eta_i$ 
\begin{equation}\label{eq:contraction}
    \lt \leq \xi(\delta) + \frac{1}{8}l(z^{(t-1)},z), \, \forall t \geq 1.
\end{equation} 
In particular, we have for all $t\gtrsim \log(1/\delta)$ that  \[ \lt \lesssim \xi(\delta)+ \tau^{(0)}(1/8)^{t-\Theta (\log(1/\delta))}. \] 
\end{theorem}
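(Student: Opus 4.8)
The plan is to establish the one-step contraction \eqref{eq:contraction} by induction on $t$, carrying along the extra invariant $l(z^{(t)},z)\le\tau^{(0)}$ so that the \emph{uniform} controls of Conditions \ref{cond:f} and \ref{cond:g} remain applicable at every iteration; this uniformity over the $\tau^{(0)}$-ball around $z$ is exactly what lets us iterate without paying a union bound over $t$. First I would fix the event $\calE$ on which Conditions \ref{cond:ideal}, \ref{cond:f} and \ref{cond:g} simultaneously hold, so that $\prob(\calE)\ge 1-\sum_{i=1}^{3}\eta_i$ by a union bound, and argue deterministically on $\calE$ from here on. The base case $l(z^{(0)},z)\le\tau^{(0)}$ is the hypothesis of the theorem.

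For the inductive step, fix $t\ge 1$ and assume $l(z^{(t-1)},z)\le\tau^{(0)}$, so Conditions \ref{cond:f} and \ref{cond:g} may be invoked with the partition $z^{(t-1)}$. If node $i$ is misclustered at step $t$, put $k=z_i^{(t)}\ne z_i$; since the row-wise projection $\calP$ in Algorithm \ref{alg:gpm} assigned $i$ to $k$, the misclustering inequality \eqref{eq:mis_cond2} holds for this $k$, namely $\tilde{E}_{i:}(W_{:z_i}-W_{:k})\le -\Delta^2(z_i,k)+F_{ik}^{(t-1)}+G_{ik}^{(t-1)}$. Using $G_{ik}^{(t-1)}\le\tfrac{\delta}{4}\Delta^2(z_i,k)$ from Condition \ref{cond:g} and then splitting on whether $F_{ik}^{(t-1)}\le\tfrac{3\delta}{4}\Delta^2(z_i,k)$, I obtain that either the oracle event $\Omega_{i,k}(\delta)$ occurs, or else $|F_{ik}^{(t-1)}|>\tfrac{3\delta}{4}\Delta^2(z_i,k)$ and hence $\Delta^2(z_i,k)\le\tfrac{16}{9\delta^2}\,(F_{ik}^{(t-1)})^2/\Delta^2(z_i,k)$. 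Summing $\Delta^2(z_i,z_i^{(t)})$ over all misclustered nodes and distributing them over these two alternatives gives
\[
l(z^{(t)},z)\ \le\ \sum_{i}\sum_{b\in[K]\setminus z_i}\Delta^2(z_i,b)\,\indic_{\Omega_{i,b}(\delta)}\ +\ \frac{16}{9\delta^2}\sum_{i}\max_{b\in[K]\setminus z_i}\frac{(F_{ib}^{(t-1)})^2}{\Delta^2(z_i,b)}.
\]
The first term is $\xi(\delta)$ by definition, and Condition \ref{cond:f} bounds the second by $\tfrac{16}{9\delta^2}\cdot\tfrac{\delta^2}{256}\,l(z^{(t-1)},z)=\tfrac{1}{144}l(z^{(t-1)},z)\le\tfrac{1}{8}l(z^{(t-1)},z)$, which is precisely \eqref{eq:contraction}.

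To propagate the invariant I would combine \eqref{eq:contraction} with $\xi(\delta)\le\tfrac{3}{4}\tau^{(0)}$ (Condition \ref{cond:ideal}) and $l(z^{(t-1)},z)\le\tau^{(0)}$ to get $l(z^{(t)},z)\le\tfrac{3}{4}\tau^{(0)}+\tfrac{1}{8}\tau^{(0)}<\tau^{(0)}$, which closes the induction and shows \eqref{eq:contraction} holds for all $t\ge 1$. Unrolling the recursion from $t=1$ and summing the resulting geometric series then yields $l(z^{(t)},z)\le\tfrac{8}{7}\xi(\delta)+8^{-t}\tau^{(0)}$, and since $8^{-t}\le (1/8)^{t-\Theta(\log(1/\delta))}$ this implies the stated bound $l(z^{(t)},z)\lesssim\xi(\delta)+\tau^{(0)}(1/8)^{t-\Theta(\log(1/\delta))}$.

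Given the decomposition \eqref{eq:mis_cond2}, the argument is essentially the contraction template of \cite{Gao2019IterativeAF, braun2021iterative}, so I do not expect a serious conceptual obstacle; the two points needing genuine care are (i) organizing the induction so that $l(z^{(t)},z)\le\tau^{(0)}$ is never violated — otherwise the uniform $F$- and $G$-controls are not licensed at the next step — and (ii) keeping the numerical constants under control through the dichotomy on $F_{ik}^{(t-1)}$, so that the contraction factor lands below $1/8$ while the $\tfrac{3}{4}\tau^{(0)}$ margin from Condition \ref{cond:ideal} still leaves room to reabsorb the previous error. The substantive difficulty of the whole program, namely establishing Conditions \ref{cond:ideal}, \ref{cond:f} and \ref{cond:g} from the concentration bounds of Lemma \ref{lem:conc_mat}, is external to this particular statement.
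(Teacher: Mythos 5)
Your proof is correct and follows exactly the route the paper intends: the paper's own ``proof'' is a one-line citation to Theorem 3.1 of \cite{Gao2019IterativeAF} (and to Corollary 1 of \cite{Braun2021ClusteringMG} for the unrolling), and your argument is precisely that contraction template, with the inductive invariant $l(z^{(t)},z)\le\tau^{(0)}$ correctly maintained so that the uniform $F$- and $G$-controls remain licensed, and with the dichotomy on $F_{ik}^{(t-1)}$ yielding the contraction factor $\frac{16}{9\cdot 256}=\frac{1}{144}\le\frac{1}{8}$ and then the geometric-series unrolling. No gaps.
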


\begin{proof}
 It is an immediate adaptation of Theorem 3.1 in \cite{Gao2019IterativeAF}. The last part can be derived in the same way as in Corollary 1 in \cite{Braun2021ClusteringMG}.
\end{proof}

\subsection{Application to BiSBM}
When applied to the BiSBM the previous theorem leads to the following result. \rev{Recall $c(K,L)$ as defined within Remark \ref{rmk:1}.}
\begin{theorem}\label{thm:sbisbm} Assume that  $A\sim BiSBM(Z_1,Z_2,\Pi)$,  $K^2L\leq \tilde{\Delta}^2$, $\tilde{\Delta}\to \infty$,  \eqref{eq:n12_pmax_conds_spec} and \eqref{eq:n12_KL_conds_spec} are satisfied. 
Under Assumptions \ref{ass:balanced_part}, \ref{ass:eig_low_bd} and \ref{ass:diag_dom}, if \gpm\, is initialized with a $z^{(0)}$ such that \[ \rev{ l(z,z^{(0)}) \leq \tau^{(0)}=\epsilon' \delta \min(1, \beta^2) \frac{n_1\Delta_{min}^2}{\alpha^{3}c(K,L)^2K}}\] for a small enough constant $\epsilon'>0$ and $\delta=\frac{1}{4\eta \alpha}$, then with probability at least $1-n_1^{-\Omega(1)}$ we have for all $t\gtrsim \log n_1$ \[ r(z^{(t)},z) \leq \exp (-(1-o(1))\tilde{\Delta}^2).\]  In particular, if $\tilde{\Delta}^2>\log n_1$, we can exactly recover $Z_1$.
\end{theorem}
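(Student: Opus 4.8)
The plan is to combine the spectral initialization of Proposition~\ref{prop:spec} with the contraction mechanism of Theorem~\ref{thm:gao_ext}: first put $z^{(0)}$ into the attraction basin $\{l(z,\cdot)\le\tau^{(0)}\}$, then verify the three noise Conditions~\ref{cond:ideal}, \ref{cond:f}, \ref{cond:g}, apply Theorem~\ref{thm:gao_ext}, and finally turn the resulting bound on $l(z^{(t)},z)$ into one on $r(z^{(t)},z)$. For the initialization: by \eqref{eq:delta_zik} we have $\max_{a\ne b}\Delta^2(a,b)\le\eta\alpha^2\Delta_{min}^2$, so $l(z,z^{(0)})\le n_1\,r(z^{(0)},z)\,\eta\alpha^2\Delta_{min}^2$, and it suffices to make $r(z^{(0)},z)$ smaller than a constant times $\epsilon'\delta\min(1,\beta^2)/(K\eta\alpha^2)$. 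We thus run Algorithm~\ref{alg:spec} with target accuracy $\varepsilon=\varepsilon_n$ equal to that quantity divided by $\sqrt K$ (so $K\varepsilon_n^2$ has the right size); one checks $\varepsilon_n=O(1/\sqrt K)$ and, using $K^2L\le\tilde{\Delta}^2$, that $p_{\max}^2\ge KL\log n_1/(\varepsilon_n n_1 n_2)$, so Proposition~\ref{prop:spec} applies under \eqref{eq:n12_pmax_conds_spec}--\eqref{eq:n12_KL_conds_spec} and gives $l(z,z^{(0)})\le\tau^{(0)}$ with probability $1-n^{-\Omega(1)}$.

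\emph{Verifying the conditions.} Write $B=\tilde P+\tilde E$ with $\tilde E=\calH(EP^\top+PE^\top+EE^\top)$. Condition~\ref{cond:g} holds deterministically on the basin: $G_{ik}^{(t)}$ depends only on $z^{(t)}$, and $\tilde P_{i:}W^{(t)}_{:k}$ is an average of entries $Q_{z_i z_j}$ over the estimated community $k$, so it differs from $Q_{z_i k}$ by at most $\max_{a,b,c}|Q_{ab}-Q_{ac}|\lesssim n_2p_{\max}^2/L$ times the fraction $\le l(z,z^{(t)})/\Delta_{min}^2$ of misclustered nodes; on $\{l(z,z^{(t)})\le\tau^{(0)}\}$ this is $\le(\delta/4)\Delta^2(z_i,k)$ precisely because $\tau^{(0)}$ was calibrated so. Condition~\ref{cond:f} reduces to a single spectral estimate: with $H=W^{(t)}-W$ (supported on the disagreement set of $z^{(t)}$ and $z$, with $\|H\|_F^2\lesssim(K^2/n_1^2)\,l(z,z^{(t)})/\Delta_{min}^2$), the left side of Condition~\ref{cond:f} is $\le 2\|\tilde E\|^2\|H\|_F^2/(\Delta_{min}^2 l(z,z^{(t)}))\lesssim K^2\|\tilde E\|^2/(n_1^2\Delta_{min}^4)$, uniformly in $z^{(t)}$, so it suffices that $\|\tilde E\|\lesssim\delta n_1\Delta_{min}^2/K$; the bound $\|\tilde E\|\lesssim\log n_1+\sqrt{n_1 n_2}p_{\max}+n_1\sqrt{n_2}p_{\max}^{3/2}+n_2p_{\max}^2$ from Lemma~\ref{lem:conc_mat} (as in the proof of Proposition~\ref{prop:spec}) makes each term $\lesssim n_1 n_2 p_{\max}^2/(KL)\lesssim\delta n_1\Delta_{min}^2/K$ under $K^2L\le\tilde{\Delta}^2$, \eqref{eq:n12_pmax_conds_spec}, \eqref{eq:n12_KL_conds_spec}, with probability $1-n^{-\Omega(1)}$. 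Condition~\ref{cond:ideal} is the heart of the matter: for each $i$ and $k\ne z_i$ we must tail-bound $\tilde E_{i:}(W_{:z_i}-W_{:k})$. Its $\calH(EP^\top)$ and $\calH(PE^\top)$ parts are sums of independent bounded variables (handled by Bernstein), while $\calH(EE^\top)_{i:}(W_{:z_i}-W_{:k})=\langle E_{i:},v\rangle$ with $v$ a random vector independent of $E_{i:}$ — we condition on $v$ (Bernstein again) and control $\|v\|$ and $\|v\|_\infty$ via part~3 of Lemma~\ref{lem:conc_mat}. The effective variance is $\Theta(n_2p_{\max}^2K/n_1)$, and with the constant in $\tilde{\Delta}^2$ and the choice $\delta=1/(4\eta\alpha)$ calibrated to this, and $n_2/n_1\gtrsim\log^2 n_2$ suppressing the Bernstein correction, we get $\prob(\Omega_{i,k}(\delta))\le\exp(-(1-o(1))\tilde{\Delta}^2)$. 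Hence $\expec[\xi(\delta)]\le n_1 K(\max_{a\ne b}\Delta^2(a,b))\exp(-(1-o(1))\tilde{\Delta}^2)=o(\tau^{(0)})$ (using $K^2L\le\tilde{\Delta}^2\to\infty$), and Markov gives Condition~\ref{cond:ideal} with $\eta_1=n^{-\Omega(1)}$.

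\emph{Contraction and the rate.} With $l(z^{(0)},z)\le\tau^{(0)}$ and Conditions~\ref{cond:ideal}--\ref{cond:g}, Theorem~\ref{thm:gao_ext} gives, on an event of probability $1-n^{-\Omega(1)}$, $l(z^{(t)},z)\le\xi(\delta)+\tau^{(0)}(1/8)^{t-\Theta(\log(1/\delta))}$ for all $t\ge1$; since $\delta=1/(4\eta\alpha)$ (so $\log(1/\delta)=O(\log n)$), for $t\gtrsim\log n$ the geometric term falls below $\Delta_{min}^2$ and therefore contributes nothing (being nonnegative and dominated by a sum of terms each $\ge\Delta_{min}^2$), leaving $l(z^{(t)},z)\lesssim\xi(\delta)$. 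Each misclustered node contributes $\ge\Delta_{min}^2$ to $l$, so $r(z^{(t)},z)\le l(z^{(t)},z)/(n_1\Delta_{min}^2)\lesssim\xi(\delta)/(n_1\Delta_{min}^2)$; combining with the bound on $\expec[\xi(\delta)]$, with $\max\Delta^2\le\eta\alpha^2\Delta_{min}^2$, and with $K\eta\alpha^2=\exp(o(\tilde{\Delta}^2))$ (again from $K^2L\le\tilde{\Delta}^2$), a Markov step whose polynomial loss is absorbed into the exponent's $o(1)$ yields $r(z^{(t)},z)\le\exp(-(1-o(1))\tilde{\Delta}^2)$. When $\tilde{\Delta}^2>\log n_1$ the right side is below $1/n_1$; directly, $\prob(\xi(\delta)\ge\Delta_{min}^2)\le\expec[\xi(\delta)]/\Delta_{min}^2\to0$, so $\xi(\delta)=0$, hence $l(z^{(t)},z)=0$ for $t\gtrsim\log n$, i.e.\ exact recovery of $Z_1$.

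\emph{Main obstacle.} The substantive difficulty is Condition~\ref{cond:ideal}: isolating the correct effective variance $\Theta(n_2p_{\max}^2K/n_1)$ of the Gram-type noise $\calH(EE^\top)_{i:}(W_{:z_i}-W_{:k})$ — cleanly separating it from the linear parts $\calH(EP^\top),\calH(PE^\top)$ and the cross terms — and producing a tail bound sharp enough for the $(1-o(1))$ constant in the exponent. This is exactly what part~3 of Lemma~\ref{lem:conc_mat} is for; the additional noise dependencies created by $B$ being a Gram matrix are what make this step genuinely harder than in the symmetric-SBM analyses of \cite{Gao2019IterativeAF,braun2021iterative}.
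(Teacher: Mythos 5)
Your overall architecture is the same as the paper's: spectral initialization (this is actually the content of the subsequent Corollary — the theorem only assumes $l(z,z^{(0)})\le\tau^{(0)}$), then verify Conditions~\ref{cond:ideal}, \ref{cond:f}, \ref{cond:g}, apply Theorem~\ref{thm:gao_ext}, and convert $l$ to $r$ via $r\le l/(n_1\Delta_{\min}^2)$. Your treatment of Conditions~\ref{cond:f} and~\ref{cond:g} by Cauchy--Schwarz plus spectral-norm bounds on $\tilde E$ and $\tilde P$ mirrors the paper's. You also correctly identify that Condition~\ref{cond:ideal} is the crux and that the effective variance is $\Theta(n_2p_{\max}^2K/n_1)$.

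Where your route diverges — and where I see a genuine gap — is the oracle-error bound (Lemma~\ref{lem:oracle}). The paper does \emph{not} split $\tilde E$ into $\calH(EP^\top)$, $\calH(PE^\top)$, $\calH(EE^\top)$. Instead it rewrites the discriminant exactly as
\[
B_{i:}(W_{:k}-W_{:z_i})=\bigl\langle A_{i:},\,\tilde A_k-\tilde A_{z_i}\bigr\rangle,
\]
where $\tilde A_k$ and $\tilde A_{z_i}$ are column-averages of rows of $A$ \emph{other than} $i$, hence independent of $A_{i:}$. Conditioning on $A_{i:}$, each coordinate's contribution has an explicitly computable Bernoulli/Binomial m.g.f.; choosing the tilt $t^*=\epsilon n_1/(\alpha K)$ small enough to Taylor-expand $e^{\pm t/|\calC|}-1$ gives the sharp exponent $\exp(-\tilde\Delta^2)$ with a clean constant, and no separate treatment of linear vs.\ quadratic noise is needed. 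Your route instead conditions on $v=E^\top(W_{:z_i}-W_{:k})$ and applies Bernstein to $\langle E_{i:},v\rangle$. Two problems. First, Bernstein's moderate-deviations denominator $p_{\max}\|v\|^2+\|v\|_\infty t/3$ stays in the sub-Gaussian regime at the relevant threshold $t\asymp\Delta^2(z_i,k)\asymp n_2p_{\max}^2/L$ only if $\|v\|_\infty\ll KL/n_1$; but $v_j$ is a normalized centered Binomial and one gets (by Bernstein on $v_j$ itself plus a union over $n_2$ columns) $\|v\|_\infty\gtrsim\sqrt{p_{\max}K\log n_2/n_1}+K\log n_2/n_1$, which is not forced to be $o(KL/n_1)$ by~\eqref{eq:n12_pmax_conds_spec}--\eqref{eq:n12_KL_conds_spec} — the theorem puts a lower bound, not an upper bound, on $p_{\max}$. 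So the generic Bernstein bound degrades to a linear-in-$t$ exponent in part of the parameter range and does not obviously yield the sharp $(1-o(1))\tilde\Delta^2$ constant, whereas the paper's m.g.f.\ computation is custom-tailored to the Bernoulli structure and does. Second, part~3 of Lemma~\ref{lem:conc_mat} controls $\|EZ_2\|$ (with $Z_2$ the \emph{column}-membership matrix), which is a different object from $\|E^\top(W_{:z_i}-W_{:k})\|$ or $\|v\|_\infty$; the concentration you need would have to be re-proved, not cited. Third, splitting into three pieces and bounding each at a fraction of $(1-\delta)\Delta^2$ burns a union bound and allocation constants; this is survivable but adds friction precisely where the constant in the exponent matters.

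A minor but related slip in the conditions check: in your heuristic for Condition~\ref{cond:g} the fraction of misclustered nodes is $h(z,z^{(t)})/n_1\le l/(n_1\Delta_{\min}^2)$, not $l/\Delta_{\min}^2$, and the paper's actual route is through $\|\tilde P_{i:}\|\cdot\|W^{(t)}-W\|$ with Lemma~\ref{lem:wt}; your version works up to tracking a factor of $K$ but is imprecise as written. In short: same skeleton, same treatment of the two easy conditions, but the oracle-error step — the place where the paper does real work — is done by a different and, as stated, not sharp-enough argument. The fix is to drop the decomposition of $\tilde E$ altogether, work directly with $\langle A_{i:},\tilde A_k-\tilde A_{z_i}\rangle$, and bound its conditional m.g.f.\ as in Lemma~\ref{lem:oracle}.
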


\begin{corollary}
Under the assumptions of Theorem \ref{thm:sbisbm}, \spec\, returns an estimate $z^{(0)}$ such that \[\rev{l(z,z^{(0)}) \leq \eta \beta \frac{\alpha n_1n_2p_{max}^2}{L}r(z,z^{(0)})\lesssim  \eta \beta K^2\varepsilon^2 \frac{n_1 \Delta_{min}^2}{c(K,L)^2 K}}\] and hence satisfies $l(z,z^{(0)}) \leq \tau^{(0)}$ for $\rev{\varepsilon=O(\frac{ \min(1,\beta^2)}{Kc(K,L)\eta \sqrt{ \eta \beta}})}$.
\end{corollary}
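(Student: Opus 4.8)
The plan is to obtain the Corollary purely by chaining results already in the paper: the deterministic link between the loss $l(z,\cdot)$ that drives the \gpm\ analysis and the misclustering rate $r(z,\cdot)$, and the high-probability guarantee of Proposition \ref{prop:spec} for \spec. No new randomness or estimates enter; the whole content is a careful bookkeeping of the parameters $\alpha,\beta,\eta$ through the bounds \eqref{eq:delta_zik}, Proposition \ref{prop:spec}, and the definition of $\tau^{(0)}$.

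The first step is the left inequality. Retaining the label-alignment convention of the \gpm\ analysis, namely that the optimal permutation in \eqref{eq:def_misclust} for $z^{(0)}$ is the identity, I would write $\sum_{i\in[n_1]}\indic_{\{z_i\neq z^{(0)}_i\}}=n_1\,r(z,z^{(0)})$. Plugging this into the definition of $l(z,z^{(0)})$ together with the uniform upper bound $\Delta^2(z_i,k)\le \eta\beta\,\alpha n_2p_{max}^2/L$ from \eqref{eq:delta_zik} gives
\[
l(z,z^{(0)})\;=\;\sum_{i\in[n_1]}\Delta^2\!\big(z_i,z^{(0)}_i\big)\indic_{\{z_i\neq z^{(0)}_i\}}\;\le\;\eta\beta\,\frac{\alpha n_1n_2p_{max}^2}{L}\,r(z,z^{(0)}),
\]
which is exactly the first displayed inequality.

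Next I would feed in the spectral rate. First I would verify that the hypotheses of Proposition \ref{prop:spec} are in force: Assumptions \ref{ass:balanced_part} and \ref{ass:eig_low_bd} hold by the assumptions of Theorem \ref{thm:sbisbm}, as do \eqref{eq:n12_pmax_conds_spec} and \eqref{eq:n12_KL_conds_spec}, and the accuracy $\varepsilon\in(0,1)$ is the one appearing in the statement. Proposition \ref{prop:spec} then yields, with probability at least $1-n^{-\Omega(1)}$, that $r(z,z^{(0)})\lesssim K\varepsilon^2$. To rewrite $n_2p_{max}^2/L$ in terms of $\Delta_{min}^2$, I would use the matching lower bound $\Delta^2(z_i,k)\ge \beta n_2p_{max}^2/(\alpha L)$ from \eqref{eq:delta_zik}, so that $n_2p_{max}^2/L\le(\alpha/\beta)\Delta_{min}^2$. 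Substituting both estimates into the bound above produces
\[
l(z,z^{(0)})\;\lesssim\;\eta\,\alpha^2\,K\varepsilon^2\,n_1\Delta_{min}^2\;=\;\eta\,\alpha^2\,K^2\varepsilon^2\,\frac{n_1\Delta_{min}^2}{K},
\]
which, treating $\alpha=\Theta(1)$ (and reinstating a factor $\beta$ where convenient when $\beta\gtrsim1$), is the displayed upper bound. Finally I would substitute $\delta=\tfrac{1}{4\eta\alpha}$ from Theorem \ref{thm:sbisbm} into $\tau^{(0)}=\epsilon'\delta\min(1,\beta^2)\,n_1\Delta_{min}^2/K$ and impose $l(z,z^{(0)})\le\tau^{(0)}$; cancelling the common factor $n_1\Delta_{min}^2/K$ reduces this to $\eta K^2\varepsilon^2\lesssim \epsilon'\delta\min(1,\beta^2)$, i.e., after inserting $\delta$ and rearranging, the stated condition $\varepsilon=O\!\big(\min(1,\beta^2)/(K\eta\sqrt{\eta\beta})\big)$, on whose defining event (the one from Proposition \ref{prop:spec}) the bound $l(z,z^{(0)})\le\tau^{(0)}$ then holds.

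I expect the only real friction to lie in the last two steps: carrying the three model parameters $\alpha,\beta,\eta$ through the whole chain so that the \emph{exact} exponents in the final condition on $\varepsilon$ come out as claimed — the naive bookkeeping sketched above already gives a valid, merely slightly weaker, sufficient condition — and checking that the numerical constants coming from Proposition \ref{prop:spec}, \eqref{eq:delta_zik}, and the \gpm\ analysis are mutually compatible. Everything else is immediate.
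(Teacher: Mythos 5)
The paper itself gives no proof of this corollary, and your chain of inequalities is the natural one: bound each $\Delta^2(z_i,z^{(0)}_i)$ by the uniform upper bound $\eta\beta\alpha n_2p_{max}^2/L$ from \eqref{eq:delta_zik} to get the first display, then feed in $r(z,z^{(0)})\lesssim K\varepsilon^2$ from Proposition~\ref{prop:spec} together with the matching lower bound $\Delta_{min}^2\ge \beta n_2p_{max}^2/(\alpha L)$, and finally compare with $\tau^{(0)}$ using $\delta=1/(4\eta\alpha)$. This is correct, and your honest accounting of the parameter exponents is actually tighter than the stated corollary: the $\beta$'s cancel in the middle step, leaving a factor $\eta\alpha^2$ rather than the $\eta\beta$ printed in the statement (which can only be recovered when $\beta\gtrsim\alpha^2$); likewise your derived sufficient condition $\varepsilon\lesssim \sqrt{\min(1,\beta^2)}/(\eta K)$ (up to $\alpha$-factors) is implied by, but weaker than, the condition $\varepsilon=O\bigl(\min(1,\beta^2)/(K\eta\sqrt{\eta\beta})\bigr)$ in the paper, since $\min(1,\beta^2)\le \eta\beta$ always. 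So the corollary as stated is verified (the more restrictive $\varepsilon$ is still a valid sufficient condition), and the apparent $\beta$ in the middle display looks like a typo for $\alpha^2$. The only point worth tightening in your write-up is the parenthetical "reinstating a factor $\beta$ where convenient when $\beta\gtrsim1$": this should be made explicit rather than left as an aside, since it is precisely where the printed bound departs from the careful computation.
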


\begin{remark} Specialized to the SBISBM case where $K=2=L$, $q=c p$ for some $0 < c < 1$, the condition $\tilde{\Delta}^2=\Omega(\log n_1)$ needed for exact recovery in the above theorem is equivalent to the condition $p_{max} = \Omega(\sqrt{\frac{\log n_1}{n_1n_2})}$ needed by \cite{Ndaoud2021ImprovedCA}. We will show in the next section that this rate is optimal. \end{remark}

\begin{remark}
The condition $n_1n_2p_{max}^2\gtrsim KL\log n_1$ is needed because we used the concentration inequality $||\calH(EE^\top)|| \lesssim   \max(\log n_1, \sqrt{n_1n_2}p_{max}) $ in Lemma \ref{lem:conc_mat}. If this last condition could have been replaced by $||\calH(EE^\top)|| \lesssim   \sqrt{n_1n_2}p_{max} $ then we would only need the condition \rev{$\tilde{\Delta}\geq c \log n_1$ for any constant $c>0$}. This is the principal step that needs to be improved in order to get more general weak consistency guarantees. \rev{A possible way to do that would be to extend the combinatorial techniques developed by \cite{freige05} for generalized Erdös-Renyi graphs to similarity matrices. When $n_1 n_2 p^2_{max} \to \infty$ at a rate slower than $\log n_1$, one could also try to extend the trimming method of \cite{vershynin_levina2015}.}
\end{remark}


\begin{proof}[Proof of Theorem \ref{thm:sbisbm}]
In order to apply Theorem \ref{thm:gao_ext}, we need to show that Conditions \ref{cond:ideal}, \ref{cond:f} and \ref{cond:g} are satisfied.

\paragraph{Oracle error.}
Let $\delta= \frac{1}{4\eta \alpha}$. We obtain by Lemma \ref{lem:oracle} (see appendix) and Assumption \ref{ass:diag_dom} that \[ \expec(\xi(\delta)) \leq \alpha\eta\beta Kn_1 \frac{ n_2}{ L}p^2_{max}  e^{- \tilde{\Delta}^2} \lesssim \frac{\alpha^4}{\beta} K^2L \tilde{\Delta}^2e^{-\tilde{\Delta}^2} =e^{-(1-o(1))\tilde{\Delta}^2}\] by assumptions on $\tilde{\Delta}$ and $KL$. Then, by Markov inequality, we obtain \[ \prob \left(\xi(\delta)\geq \exp(\tilde{\Delta})\expec\xi(\delta)\right)  \leq \exp(-\tilde{\Delta}). \]
Consequently we get \[ \exp(\tilde{\Delta})\expec\xi(\delta) \leq \exp(-(1-o(1))\tilde{\Delta}^2)\] and hence with probability at least $1-\exp(-\tilde{\Delta})$ \[ \xi(\delta) \leq \exp(-(1-o(1))\Tilde{\Delta}^2)\leq \frac{3}{4}\tau^{(0)} \] because $\exp(-(1-o(1))\Tilde{\Delta}^2)=o(1) \ll \tau^{(0)}= \Omega(1)$ for $n_1$ large enough. This shows that Condition \ref{cond:ideal} is satisfied.
 
\paragraph{F-error term.}
With high probability, for all $z^{(t)}$ such that $\lt \leq \tau^{(0)}$ we have
\begin{align*}
    \sum_{i=1}^n\max_{b\in [K]\backslash z_i} \frac{(F_{ib}^{(t)})^2}{\Delta^2(z_i,b)l(z,z^{(t)})}& \leq \sum_i \norm{\tilde{E}_{i:}(W^{(t)}-W)}^2\max_{b\in [K]\backslash z_i}\frac{\norm{e_{z_i}-e_b}^2}{\Delta^2(z_i,b)l(z,z^{(t)})}\tag{by Cauchy-Schwartz}\\
    &\leq \frac{2\norm{\tilde{E}(W^{(t)}-W)}_F^2}{\Delta_{min}^2\lt}\\
    &\leq 2\frac{\norm{\tilde{E}}^2\norm{W^{(t)}-W}_F^2}{\Delta_{min}^2\lt}.
    \end{align*}
    
    
    By Lemma \ref{lem:conc_mat}, we have w.h.p. \[ \norm{\calH(EE^\top)} \lesssim \norm{EE^\top -\expec(EE^\top )} \lesssim \max(\log n_1,\sqrt{n_1n_2}p_{max})\] and \[ \norm{EP^\top}\leq \norm{EZ_2}\norm{\Pi Z_1^\top}\lesssim \rev{\alpha }\sqrt{\frac{n_1n_2p_{max}}{L}}\rev{c(K,L)}p_{max}\sqrt{\frac{n_1}{K}}=\rev{\alpha }(\sqrt{n_1n_2}\rev{c(K,L)}p_{max})\sqrt{\frac{n_1p_{max}}{KL}}.\]
    Since $\norm{\tilde{E}}\lesssim \norm{\calH(EE^\top)}+\norm{EP^\top}$ we obtain
    \begin{align*}
    \sum_{i=1}^n\max_{b\in [K]\backslash z_i} \frac{(F_{ib}^{(t)})^2}{\Delta^2(z_i,b)l(z,z^{(t)})} &\lesssim \frac{\max\left(\log ^2n_1,n_1n_2p_{max}^2, \rev{\alpha^2c(K,L)^2}(KL)^{-1}n_1^2n_2p_{max}^3\right) \rev{\alpha^3}K^3\lt }{n_1^3\Delta_{min}^6} 
    \\
    &\lesssim \rev{\alpha^3}K^2\frac{\max\left(\log ^2n_1,n_1n_2p_{max}^2, \rev{\alpha^5c(K,L)^2}(KL)^{-1}n_1^2n_2p_{max}^3\right)}{n_1^2\Delta_{min}^4}\frac{K\tau^{(0)}}{n_1\Delta_{min}^2}\\
    &\lesssim \frac{\rev{\alpha^5}K^2L^2\max\left(\log ^2n_1,n_1n_2p_{max}^2,\rev{\alpha^5c(K,L)^2}(KL)^{-1}n_1^2n_2p_{max}^3\right)}{\beta^2 n_1^2n_2^2p_{max}^4} \epsilon' \delta \beta^2 \tag{by definition of $\tau^{(0)}$ and \eqref{eq:delta_zik}}\\
    &\lesssim \frac{\rev{\alpha^5}K^2L^2\left(\log ^2n_1+n_1n_2p_{max}^2+\rev{\alpha^2 c(K,L)^2}(KL)^{-1}n_1^2n_2p_{max}^3\right)}{ n_1^2n_2^2p_{max}^4} \epsilon' \delta.
\end{align*}
\rev{From the assumptions of the theorem, one can verify that},
\[ 
     \frac{\rev{\alpha^5}K^2L^2\log ^2n_1}{ n_1^2n_2^2p_{max}^4} =O(1),\quad \frac{\rev{\alpha^7}K^2L^2}{n_1n_2p_{max}^2}=o(1) \quad  \text{ and } \quad \frac{\rev{\alpha^7} KL\rev{c(K,L)^2}}{n_2p_{max}}=o(1).\]
Hence for an appropriately small choice of $\epsilon'$ Condition \ref{cond:f} is satisfied.

\paragraph{G-error term.}
With high probability, for all $z^{(t)}$ such that $\lt \leq \tau^{(0)}$ we have
\begin{align*}
     \frac{|G_{i}^{(t)}|}{\Delta ^2(z_i,b) } &\leq \sqrt{2} \frac{\norm{\tilde{P}_{i:}(W^{(t)}-W)}}{\Delta_{min}^2}\tag{ by Cauchy-Schwartz }.
\end{align*}
But since $\norm{\tilde{P_{i:}}}\lesssim \norm{P_{i:}P^\top}\leq \norm{\Pi}^2 \norm{Z^\top_2Z_2}\norm{Z_1}\leq \rev{c(K,L)^2}\frac{\alpha n_2}{L}p_{max}^2\sqrt{\frac{\rev{\alpha}n_1}{K}}$
we obtain 
\begin{align*}
     \frac{|G_{i}^{(t)}|}{\Delta^2(z_i,b) } &\lesssim \rev{\alpha^{1.5}c(K,L)^2}\frac{n_2p_{max}^2\sqrt{n_1}\norm{(W^{(t)}-W)}}{L\sqrt{K}\Delta_{min}^2}\\
     &\lesssim \rev{\alpha^{3}c(K,L)^2}\frac{n_2p_{max}^2}{L\Delta_{min}^2} \frac{K\lt}{n_1\Delta_{min}^2} \tag{ by Lemma \ref{lem:wt}}\\
     &\lesssim K\rev{\alpha^{3}c(K,L)^2} \frac{\lt}{\beta n_1 \Delta_{min}^2} \\
     &\lesssim \rev{\alpha^{3} c(K,L)^2}\frac{K \tau^{(0)}}{\beta n_1 \Delta^2_{\min}}. 
\end{align*}

Now by choosing $\epsilon'$ to be a suitably small constant ($< 1$), we obtain

\begin{equation*}
    \frac{|G_{i}^{(t)}|}{\Delta^2(z_i,b)} \leq \frac{\delta}{4}.
\end{equation*}
 This shows that Condition \ref{cond:g} is satisfied.
\end{proof}

\section{Minimax lower bound}
Let us denote  the admissible parameters space \rev{for a SBiSBM (where $K = L$)} by
 \begin{align*}
     \Theta = \lbrace & P\in [0,1]^{n_1\times n_2}: P=Z_1\Pi Z_2^\top \text{ where }\Pi = q\indic_{K}\indic_{K}^\top +(p-q)I_K, 1>p>0,\\
     & q=cp \text{ for some constant }c\in (0,1), Z_1 \in \calM_{n_1,K}, Z_1 \in \calM_{n_2,K} \text{ with } \alpha=1+O\left(\sqrt{\frac{\log n_1}{n_1}}\right) \rbrace.
\end{align*}
 

 We want to lower bound $\inf_{\hat{z}} \sup_{\theta \in \Theta} \expec (r(\hat{z},z))$ where $\hat{z}$ is an estimator of $Z_1$. \rev{We will first present the core argument to obtain a lower bound for the special case $K=2$ (Theorem \ref{thm:minimax}). Then, we show that the the general case (where $K = L \geq 2$) can be handled by a reduction argument to the setting $K = 2$ (Theorem \ref{thm:minimax2}).}
 
 In the supervised case, i.e. when $Z_2$ is known, we can use the same strategy as the one use for the degree-corrected SBM by \cite{dcsbmgao2016} and obtain a lower bound of the order $e^{-(1+o(1))n_2(p-q)/2}$ corresponding to the failure probability of the optimal test associated to the following two hypothesis problem 
 \begin{align*}
     H_0:&  \otimes_{i=1}^{n_2/2} \mathcal{B}(p) \otimes_{i=n_2/2+1}^{n_2} \mathcal{B}(q), \text{ vs}\\
     H_1:&  \otimes_{i=1}^{n_2/2} \mathcal{B}(q) \otimes_{i=n_2/2+1}^{n_2} \mathcal{B}(p).
 \end{align*}  
 However when $n_2\gg n_1\log n_1$ the error associated with this hypothesis testing problem is of order $\exp(-n_2(p-q)/2) \approx \exp(-\sqrt{\frac{n_2\log n_1}{n_1}})$ but this is far smaller than $\exp(-n_1n_2p_{max}^2)$, the misclustering rate obtained for our algorithm. 
 A similar phenomena appears for high-dimensional Gaussian mixture models. Indeed, as shown in \cite{Ndaoud2018SharpOR}, it is essential to capture the hardness of estimating the model parameters in the minimax lower bound in order to get the right rate of convergence. The argument developed for obtaining a lower bound of the minimax risk in \cite{Ndaoud2018SharpOR} relies heavily on the Gaussian assumption (they set a Gaussian prior on the model parameters and use the fact that the posterior distribution is also Gaussian) and cannot directly be extended to this setting. 
 
 %
  \begin{theorem}\label{thm:minimax} 
  Suppose that $A \sim SBiSBM(Z_1, Z_2, p, q)$ with $K=L=2$,  $n_2\gg n_1 \log n_1$, $n_1n_2p_{max}^2\to \infty$ and $n_1n_2p_{max}^2=O( \log n_1$).
  Then there exists a constant $c_1 > 0$ such that \[ \inf_{\hat{z}} \sup_{\theta \in \Theta} \expec (r(\hat{z},z)) \geq \exp(-c_1n_1n_2p_{max}^2) \] where the infimum is taken over all measurable functions $\hat{z}$ of $A$. Moreover, if $n_1n_2p_{max}^2=\Theta(1)$, then $\inf_{\hat{z}} \sup_{\theta \in \Theta} \expec (r(\hat{z},z)) \geq c_2$ for some positive constant $c_2$. 
  
\end{theorem}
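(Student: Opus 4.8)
The plan is to use a standard information-theoretic reduction via Fano's inequality (or more precisely Assouad's lemma, since we want a rate on the expected misclustering fraction rather than just an exact-recovery threshold). First I would restrict attention to a carefully chosen finite sub-family of $\Theta$ on which the problem is already hard. The natural choice is to fix the connectivity matrix $\Pi$ (hence $p$, $q$, $c$) at values saturating $n_1 n_2 p_{\max}^2 \asymp \log n_1$, fix the column partition $Z_2$ to be exactly balanced, and let only $Z_1$ vary over an appropriate packing of $\calM_{n_1,2}$. Since $K=2$, a membership matrix is determined by a subset $S\subseteq[n_1]$ (the nodes in community $1$), and the misclustering distance between two partitions is (up to the global flip) the Hamming distance $|S\triangle S'|/n_1$. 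So the problem reduces to estimating a hidden bit-string in $\{0,1\}^{n_1}$ from independent observations, which is the exact setting of Assouad's lemma.

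The key computation is then the following: for two row-partitions $z, z'$ that differ only in the label of a single node $i$ (moving node $i$ from community $1$ to community $2$), I need an upper bound on the KL-divergence (or Hellinger/chi-square distance) between the two induced product-Bernoulli laws on the adjacency matrix $A$. Only row $i$ of $A$ changes distribution: under $z$ the entries $A_{ij}$ have parameter $p$ or $q$ according to $z_2(j)$, and under $z'$ they are swapped. The total KL-divergence is therefore $\sum_{j=1}^{n_2} \mathrm{KL}(\mathcal{B}(p_{ij})\|\mathcal{B}(p'_{ij}))$, and since exactly half the columns have their parameter switched from $p$ to $q$ and the other half from $q$ to $p$, this equals $\tfrac{n_2}{2}\big(\mathrm{KL}(\mathcal{B}(p)\|\mathcal{B}(q)) + \mathrm{KL}(\mathcal{B}(q)\|\mathcal{B}(p))\big)$. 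In the sparse regime $p,q\to 0$ with $q = cp$, a Taylor expansion gives $\mathrm{KL}(\mathcal{B}(p)\|\mathcal{B}(q)) + \mathrm{KL}(\mathcal{B}(q)\|\mathcal{B}(p)) = (1+o(1))\,\tfrac{(p-q)^2}{p}\cdot\tfrac{1}{c}$ — in any case it is $\Theta((p-q)^2/p) = \Theta(p_{\max})$ up to the constant $c$. Hence a single-coordinate flip costs KL-divergence $\asymp n_2 p_{\max}$, and iterating over the block structure, the $\chi^2$ or KL budget for distinguishing the full string is controlled by $n_1 n_2 p_{\max} \cdot p_{\max}^{?}$ — wait, I must be careful: Assouad needs the divergence per flipped coordinate, which is $\asymp n_2 p_{\max}$, not $\asymp n_1 n_2 p_{\max}^2$. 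Let me reconsider: the right scaling emerges because we also need the number of nodes $n_1$ to enter, and indeed Assouad's lemma yields a lower bound of the form $\tfrac12\big(1 - \sqrt{\tfrac12 \max_i \mathrm{KL}}\big)$ times the per-coordinate loss, which only gives a constant lower bound, not the exponentially small rate $\exp(-c_1 n_1 n_2 p_{\max}^2)$.

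So Assouad alone is too crude for the exponential rate; instead I would follow the two-point testing / Le Cam approach per node refined by a union/counting argument, exactly as in the lower bounds for the SBM (e.g. the approach behind the $\exp(-(1+o(1))\mathrm{SNR})$ rates). Concretely: fix all but one node $i$, condition on a "typical" configuration of the other $n_1-1$ labels, and show that the Bayes-optimal test for $z_i=1$ vs.\ $z_i=2$ has error probability $\ge \exp(-(1+o(1)) I_n)$ where $I_n$ is the Chernoff information between the two row-distributions of $A_{i:}$, namely $I_n \asymp n_2\, C(\mathcal{B}(p),\mathcal{B}(q)) \asymp n_2 p_{\max}$... which still gives $\exp(-cn_2 p_{\max}) = \exp(-c\sqrt{n_2\log n_1/n_1})$, the wrong (too small) rate already dismissed in the text. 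The resolution — and the real content of the theorem — must be that the hardness comes from \emph{not knowing $Z_2$}: the correct construction perturbs $Z_2$ as well, so that the effective signal for classifying row $i$ is degraded. The right reduction is: put a prior on $Z_2$ (say each column's label uniform and independent), and then the observation for row $i$ is a mixture over the unknown column labels; by convexity / data-processing the testing error can only increase, and one shows it is now $\ge \exp(-c_1 n_1 n_2 p_{\max}^2)$ because distinguishing requires \emph{jointly} resolving row $i$'s label and enough of $Z_2$, and the latter costs $\asymp n_1 p_{\max}$ bits per column over $n_2$ columns interacting with the $n_1 p_{\max}^2$-scale signal. I would make this precise by a chi-square computation on the mixture: $\expec_{Z_2}\!\big[\prod_j \tfrac{d\mathcal{B}(p_{ij})}{d\mathcal{B}(\bar p_j)}\big]$ where $\bar p_j = (p+q)/2$, expand, and obtain a bound of the form $\exp(O(n_1 n_2 p_{\max}^2))$ on the second moment, whence Le Cam gives the matching lower bound.

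The main obstacle, then, is \textbf{identifying and correctly handling the correct hard instance} — the construction must randomize $Z_2$ (not just $Z_1$), and the core estimate is a second-moment/$\chi^2$ bound on a Bernoulli product-mixture in the sparse regime, showing $\log\mathbb{E}[\,(\text{likelihood ratio})^2\,] = \Theta(n_1 n_2 p_{\max}^2)$. Everything else (reducing $r(\hat z,z)$ to a worst-case two-point testing error via Markov's inequality and the pigeonhole over the $n_1$ nodes, handling the $\alpha = 1+O(\sqrt{\log n_1 / n_1})$ near-balance constraint by a concentration argument on the random $Z_2$, and the final case $n_1 n_2 p_{\max}^2 = \Theta(1)$ giving a constant lower bound directly from Le Cam's two-point method without the exponential) is routine. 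I would organize the write-up as: (i) reduce the minimax risk to a single-node testing problem; (ii) build the prior on $(Z_1, Z_2)$ and verify it lies in $\Theta$ with high probability; (iii) the $\chi^2$ computation for the mixture likelihood ratio; (iv) conclude via Le Cam, and separately dispatch the $\Theta(1)$ regime.
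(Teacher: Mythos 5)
You correctly home in on the single non-routine insight: the hardness must come from not knowing $Z_2$, so one randomizes $Z_2$ and works with the resulting mixture, and a direct per-row two-point test (with $Z_2$ known) gives the wrong, too-small rate $\exp(-\Theta(n_2 p_{\max}))$, while Assouad only gives a constant. That matches the paper's Step 1--2, where a Rademacher prior is put on both $z$ and $z'$. Where your plan diverges from the paper is in how the mixture is handled, and it is there that the plan as written has a real gap.

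The paper does \emph{not} compute a $\chi^2$ of the mixture. Instead it conditions on $A_{-i:}$ and $z_{-i}$, which turns the law of $A_{i:}$ under the prior on $Z_2$ into a genuine product of Bernoullis with parameters $\alpha_j p + (1-\alpha_j) q$ (resp. $\alpha_j q + (1-\alpha_j) p$), where $\alpha_j$ is the posterior probability that $z'_j = 1$ given $A_{-ij}, z_{-i}$. The key combinatorial fact is then that $\alpha_j \approx 1/2$ for all but $\Theta(n_1 n_2 p)$ columns (those $j$ for which $\sum_{i' \in \calC_+} A_{i'j} - \sum_{i' \in \calC_-} A_{i'j} \neq 0$), and on the remaining informative columns the component tests look like $\mathcal{B}(p)$ vs. $\mathcal{B}(q)$. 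The exponential lower bound $\exp(-\Theta(n_1 n_2 p^2))$ then follows by bounding the uninformative factors from below by the density of $\mathcal{B}((p+q)/2)$ and applying a standard Neyman--Pearson lower bound for a product of $\Theta(n_1 n_2 p)$ Bernoulli tests (Lemma~4 in \cite{dcsbmgao2016}), together with a Paley--Zygmund argument to show the ``informative set'' has the right size with high probability.

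Your alternative --- compute $\E_{Z_2}\bigl[\prod_j dP_1^{Z_2}/dQ\bigr]$ second moments and ``whence Le Cam gives the matching lower bound'' --- has two problems as stated. First, a second-moment bound $\chi^2(P_1,P_0) \leq e^{Cn_1n_2p^2}$ combined with Le Cam's two-point bound $\tfrac12(1 - \mathrm{TV})$ gives nothing when $\chi^2$ is exponentially large (the bound $\mathrm{TV} \leq \sqrt{\chi^2/2}$ is vacuous); to extract an exponential lower bound on the error from a second-moment bound one needs an additional device such as Paley--Zygmund applied to the likelihood ratio ($\prob_{P_0}(dP_1/dP_0 > 1/2) \geq \tfrac14 (\chi^2+1)^{-1}$), or a direct Bhattacharyya-affinity lower bound. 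You do not mention this step. Second, the mixture $\chi^2$ computation is not the innocuous per-column factorization you sketch: the second moment of a mixture likelihood ratio involves two independent copies $Z_2, Z_2'$ of the latent column labels, and the resulting overlap terms need to be controlled; it is not automatic that the answer is $\exp(O(n_1 n_2 p_{\max}^2))$ rather than something larger. The paper's conditioning device deliberately avoids this: after conditioning, the object to be lower bounded is a plain product-Bernoulli test with \emph{fixed} parameters $\alpha_j$, for which the exponential rate is classical. So the core insight is right, but the technical route you propose has a missing step (the likelihood-ratio tail argument) and an underestimated difficulty (the mixture second moment), both of which the paper sidesteps by conditioning first.
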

\begin{remark}
This lower-bound shows that the rate of convergence of our estimator is optimal up to a constant factor. Indeed, for exact recovery, the minimax lower bound implies that $n_1n_2p_{max}^2 \gtrsim \log n_1$ is necessary, while for weak recovery, we need $n_1n_2p_{max}^2\to \infty$. It also shows that when $n_1n_2p_{max}^2=O(1)$ it is not possible to consistently estimating $Z_1$.
\end{remark}


\rev{In the general case where $K=L \geq 2$, we obtain by a reduction argument (to the case $K = 2$) the following theorem.}

 \begin{theorem}\label{thm:minimax2} 
 \rev{
  Suppose that $A \sim SBiSBM(Z_1, Z_2, p, q)$ with $K=L\geq 2$,  $n_2\gg n_1 \log n_1$, $n_1n_2p_{max}^2/(KL)\to \infty$ and $n_1n_2p_{max}^2/(KL)=O( \log n_1$).
  Then there exists a constant $c_1 > 0$ such that \[ \inf_{\hat{z}} \sup_{\theta \in \Theta} \expec (r(\hat{z},z)) \geq \exp(-c_1n_1n_2p_{max}^2/(KL)) \] where the infimum is taken over all measurable functions $\hat{z}$ of $A$. Moreover, if $n_1n_2p_{max}^2/(KL)=\Theta(1)$, then $\inf_{\hat{z}} \sup_{\theta \in \Theta} \expec (r(\hat{z},z)) \geq c_2/K^3$ for some positive constant $c_2$. }
\end{theorem}

\begin{remark}
\rev{
The minimax lower bound is of order $\exp(-\Theta(\frac{n_1n_2p^2}{KL}))$ and matches the upper bound $\exp(-O(\frac{n_1n_2p^2}{KL^2}))$ up to a $1/L$ factor. Extending our proof technique to the general non-symmetric case seems more challenging because the posterior distributions that appears in Step 2 (in the proof of Theorem \ref{thm:minimax}) have a far more complex expression.} 
\end{remark}

\subsection{\rev{Proof of Theorem \ref{thm:minimax}}}

The general idea of the proof is to first lower bound the minimax risk by an error occuring in a two hypothesis testing problem and then to replace this hypothesis testing problem by a simpler one. The steps are detailed below.

 \paragraph{Step 1.}
Recall that $z, z'$ denote the partition functions associated with $Z_1$ and $Z_2$ respectively.  We choose as a prior on $z$ and $z'$ a product of independent, centered Rademacher distributions. Since the marginals of $z,z'$ are sign invariant, then by using standard arguments, the results in \cite{dcsbmgao2016} (see the proof of Theorem 2) or \cite{Ndaoud2018SharpOR} show that (\rev{for any given $i=1,\dots,n_1$}) \[ \inf_{\hat{z}} \sup_{\theta \in \Theta} \expec (r(\hat{z},z)) \gtrsim \rev{ \inf_{\hat{z_i}} \expec_{z,z'} \expec_{A|z,z'}(\phi_i(A))}\] 
where $\hat{z}$ is a measurable function in $A$ and $\phi_i(A)=\indic_{\hat{z_i}\neq z_i}$. 
 
\paragraph{ Step 2.} 

We can write  \[ \expec_{z,z'} \expec_{A|z,z'}( \phi_i(A)) = \expec_{z_{-i}}\expec_{z_i} \expec_{\zz} \expec_{A|z,\zz}( \phi_i(A)) =\underbrace{ \expec_{z_{-i}}\expec_{z_i}  \expec_{A|z}( \phi_i(A))}_{R_i}\] where $z_{-i}:=(z_j)_{j\neq i}$ and $\expec_{X|Y}$ means that we integrate over the random variable $X$ conditioned on $Y$. Since $z'$ is random, the entries of $A$ are no longer independent (this is the reason why we use an uninformative prior on $\zz$). 
Note that the quantity $\expec_{\zz} \expec_{A|z,\zz}( \phi_i(A))=\expec_{A|z}( \phi_i(A))$ only depends
on $f(A|z)$ the density of $A$ conditionally on $z$. Since the columns of $A$ are independent conditionally on $z$ (because $(\zz_{j})_{j=1}^{n_2}$ are independent) we have that $$f(A|z) = \prod_j f(A_{:j}|z),$$ i.e. $f(A|z)$ is the product of the densities of the columns $A_{:j}$ conditionally on $z$. Let us denote $A_{-ij}:=(A_{i'j})_{i'\neq i}$ and $A_{-i:}=(A_{i'j})_{i'\neq i, j}$. Now for each $j$ we can write
\[ f(A_{:j}|z) = f(A_{ij}|A_{-ij},z) f(A_{-ij}|z). \]
Since $A_{-ij}$ doesn't depend on $z_i$ we have $f(A_{-ij}|z)= f(A_{-ij}|z_{-i})$. Assume that $z_i=1$. Then $A_{ij} \sim \mathcal{B}(p)$ if $\zz_{j}=1$ or $A_{ij} \sim \mathcal{B}(q)$ if $\zz_{j}=-1$. 
This in turn implies 
\begin{align*} 
\prob (A_{ij}=1|A_{-ij},z)
&= p \prob(\zz_{j}=1 |A_{-ij},z )+q\prob(\zz_{j}=-1 |A_{-ij},z ) \\
&= p \prob(\zz_{j}=1 |A_{-ij},z_{-i} )+q\prob(\zz_{j}=-1 |A_{-ij}, z_{-i})
\end{align*}
Let us denote by $\alpha_j$ the random variable $\prob(\zz_{j}=1 |A_{-ij},z_{-i} )$. When $z_i = -1$, similar considerations imply 
\begin{align*} 
\prob (A_{ij}=1|A_{-ij},z_{-i})
= q \alpha_j + p (1-\alpha_j).
\end{align*}
This shows that the conditional distribution  \begin{equation}\label{eq:cond_dist1}
    A_{ij}|A_{-ij},z \sim \calB(\alpha_jp+(1-\alpha_j)q) \quad \text{when } z_i=1 
\end{equation}  and  \begin{equation}\label{eq:cond_dist2}
    A_{ij}|A_{-ij},z \sim \calB(\alpha_jq+(1-\alpha_j)p) \quad \text{when } z_i=-1. 
\end{equation}
We can write $R_i$ as \begin{align*}
   R_i &= \expec_{z_{-i}}\expec_{z_i}  \expec_{A_{-i:}|z}\expec_{A_{i:}|z,A_{-i:}}( \phi_i(A))\\
   &=\expec_{z_{-i}}\expec_{z_i}  \expec_{A_{-i:}|z_{-i}}\expec_{A_{i:}|z,A_{-i:}}( \phi_i(A))\\
   &=\expec_{z_{-i}}  \expec_{A_{-i:}|z_{-i}}\underbrace{\expec_{z_i}\expec_{A_{i:}|z,A_{-i:}}( \phi_i(A))}_{R'_i}.
\end{align*}

The term $R_i'$ corresponds to the risk
associated with the following two hypothesis testing problem (conditionally on $(A_{i'j})_{i'\neq i}$ and $z_{-i}$)
\begin{equation}\label{eq:minimax_test_hyp1}
     H_0:  \otimes_{j=1}^{n_2} \mathcal{B}(\alpha_jp+(1-\alpha_j)q) \text{ vs }
     H_1:  \otimes_{j=1}^{n_2} \mathcal{B}(\alpha_jq+(1-\alpha_j)p).
\end{equation}  
%

Our goal is to replace this two-hypothesis testing problem by a simpler hypothesis test associated with a smaller error. The proof strategy is the following. When $\alpha_j$ is very close to $1/2$ it is not possible to statistically distinguish $(\mathcal{B}(\alpha_jp+(1-\alpha_j)q)$ from $(\mathcal{B}(\alpha_jq+(1-\alpha_j)p)$ and these factors can be dropped. When $\alpha_j$ is significantly different from $1/2$, then the risk associated to the test can be lower bounded by a the risk of testing between  a product of $\mathcal{B}(p)$ vs. a product of $\mathcal{B}(q)$. More precisely, we will show that the number of indices $j$ for which $\alpha_j$ is significantly different from $1/2$ is of order $n_1n_2p$ and hence the risk is lower bounded by the one associated by the two hypothesis testing problem
%
\begin{align*}
     H'_0:  \otimes_{i=1}^{n_2n_1p} \mathcal{B}(p)\otimes_{i=1}^{n_2n_1p} \mathcal{B}(q)  \text{ vs }
     H'_1:  \otimes_{i=1}^{n_2n_1p} \mathcal{B}(q)\otimes_{i=1}^{n_2n_1p} \mathcal{B}(p).
 \end{align*} 
It is well known that the error associated with the above testing problem is of the order $e^{-\Theta(n_2n_1p^2)}$ (see e.g. \cite{dcsbmgao2016}) 
which corresponds to the rate of convergence of our algorithm. Let us now formalize the above argument.

\paragraph{Step 3.} 
First, let us define 
\begin{align*}
    \calC_+&=\lbrace i'\neq i: z_{i'}=1\rbrace,\\
    \calC_-&=\lbrace i'\neq i: z_{i'}=-1\rbrace,\\
    \theta_j&=\frac{\alpha_j}{1-\alpha_j}\text{ for } j=1\ldots n_2,\\
    \epsilon&= \Theta(p\sqrt{n_1\log n_1})=o(1),\\
    J_b&=\lbrace j\in [n_2] : \theta_j\in [1-\epsilon, 1+\epsilon]\rbrace,\\ 
    J_g&=\lbrace j\in [n_2] : \theta_j\notin [1-\epsilon, 1+\epsilon]\rbrace,\\
    T_j&\overset{ind.}{\sim}\calB(\alpha_j), \text{ for all } j\in J_g,
\end{align*}
 and the events
 \begin{align*}
      \calE_1 &=\left\lbrace  |\calC_+|-|\calC_-|\in [-C\sqrt{n_1\log n_1}, C\sqrt{n_1 \log n_1}]\right\rbrace \text{ for some constant }C>0,\\
       \calE_2 &=\left\lbrace  \sum_j \indic_{\lbrace \sum_{i'\in \calC_+}A_{i'j}-\sum_{i'\in \calC_-}A_{i'j}\neq 0 \rbrace} = \Theta( n_2n_1p) \right\rbrace,\\
        \calE_3&=\left\lbrace \sum_{j\in J_b}A_{ij} = \Theta(n_2p) \right\rbrace .
 \end{align*}
We will show later that these events occur with high probability. They are useful for obtaining a lower bound on the densities $f(A_{ij}|A_{-ij},z)$.
Since we are integrating over positive functions one can write \begin{equation}\label{eq:minimax_cond1}
     R_i\geq \expec_{z_{-i}} \indic_{\calE_1} \expec_{A_{-i:}|z_i} \indic_{\calE_2}\expec_{z_i}\expec_{A_{i:}|z,A_{-i:}}( \indic_{\calE_3} \phi_i(A)).
\end{equation} 

\paragraph{Step 4.} For all $j\in J_b$, the set for which $\alpha_j\approx \frac{1}{2}$, we are going to lower bound the densities $f(A_{ij}|A_{-ij},z)$ by $g(A_{ij})$ corresponding to the density of $\mathcal{B}(\frac{p+q}{2})$. 
A simple calculation shows that \[ \theta_j=\frac{\alpha_j}{1-\alpha_j}= \frac{\prod_{i' \neq i: z_{i'}=1}p^{A_{i'j}}(1-p)^{1-A_{i'j}} \prod_{i' \neq i: z_{i'}=-1}q^{A_{i'j}}(1-q)^{1-A_{i'j}} }{\prod_{i' \neq i: z_{i'}=1}q^{A_{i'j}}(1-q)^{1-A_{i'j}} \prod_{i' \neq i: z_{i'}=-1}p^{A_{i'j}}(1-p)^{1-A_{i'j}}}.\]
The previous expression can be rewritten as \[ \theta_j = \left(\frac{p(1-q)}{q(1-p)}\right)^{\sum_{i'\in \calC_+}A_{i'j}-\sum_{i'\in \calC_-}A_{i'j}}\left(\frac{1-p}{1-q}\right)^{|\calC_+|-|\calC_-|} .\]  We also have the relation $\alpha_j = \frac{\theta_j}{1+\theta_j}$, so $\alpha_j$ is close to $1/2$ if and only if $\theta_j$ is close to $1$. If $z_{-i}$ were an exactly balanced partition, i.e., $|\calC_+|-|\calC_-|=0$, then $\theta_j=1$ would be equivalent to $\sum_{i'\in \calC_+}A_{i'j}-\sum_{i'\in \calC_-}A_{i'j}=0$. 
However the contribution of the term $\left(\frac{1-p}{1-q}\right)^{|\calC_+|-|\calC_-|}$ is small under $\calE_1$. Indeed, we have $|\calC_+|-|\calC_-|\in [-C\sqrt{n_1\log n_1}, C\sqrt{n_1 \log n_1}]$.
Note that $\log (\frac{1-p}{1-q})\in [-c(p-q),c(p-q)]$ by using Taylor's formula for some constant $c>0$. Hence, under $\calE_1$ 
\[  
\left(\frac{1-p}{1-q}\right)^{|\calC_+|-|\calC_-|} \in [e^{-c'\sqrt{n_1\log n_1}p},e^{c'\sqrt{n_1\log n_1}p} ]
\] 
for some constant $c'>0$. But we have \[ \max (|e^{c'p\sqrt{n_1\log n_1}}-1|,|e^{-c'p\sqrt{n_1\log n_1}}-1|)\leq c'p\sqrt{n_1\log n_1}:= \epsilon .\] Therefore, it follows that  $(\frac{1-p}{1-q})^{|\calC_+|-|\calC_-|} \in [1-\epsilon , 1+ \epsilon]$ under $\calE_1$.
It is easy to check  $\theta_j\in [1-\epsilon, 1+\epsilon]$ implies $\alpha_j\in [1/2-\epsilon', 1/2+\epsilon']$ for $\epsilon'$ proportional to $\epsilon$. Since the constant involved here doesn't matter, we won't make a distinction between $\epsilon$ and $\epsilon'$. 

Now recall that by \eqref{eq:cond_dist1} and \eqref{eq:cond_dist2} we have \[ f(A_{ij}|A_{-ij},z)=\alpha_jp^{A_{ij}}(1-p)^{1-A_{ij}}+(1-\alpha_j)q^{A_{ij}}(1-q)^{1-A_{ij}} \text{ when } z_i=1\] and  \[ f(A_{ij}|A_{-ij},z)=\alpha_jq^{A_{ij}}(1-q)^{1-A_{ij}}+(1-\alpha_j)p^{A_{ij}}(1-p)^{1-A_{ij}} \text{ when } z_i=-1.\]
\begin{itemize}
    \item When $z_i=1$, we have for  all $j\in [n_2]$ such that $\alpha_j\in [1/2-\epsilon, 1/2+\epsilon]$ that \[ \alpha_jp^{A_{ij}}(1-p)^{1-A_{ij}}+(1-\alpha_j)q^{A_{ij}}(1-q)^{1-A_{ij}}\geq (1-\epsilon) \frac{p+q}{2}\indic_{A_{ij}=1} + (1-p\epsilon)\frac{1-p+1-q}{2}\indic_{A_{ij}=0} \] and \scriptsize \[ \prod_{j \in J_b}\alpha_j p^{A_{ij}}(1-p)^{1-A_{ij}}+(1-\alpha_j)q^{A_{ij}}(1-q)^{1-A_{ij}} \geq (1-\epsilon)^{\sum_{j\in J_b}A_{ij}} (1-p\epsilon)^{\sum_{j\in J_b}(1-A_{ij})}\prod_{j \in J_b}\frac{p^{A_{ij}}(1-p)^{1-A_{ij}}+q^{A_{ij}}(1-q)^{1-A_{ij}} }{2}. \]
    \normalsize
    \item  When $z_i=-1$, we have for  all $j\in [n_2]$ such that $\alpha_j\in [1/2-\epsilon, 1/2+\epsilon]$ \[ \alpha_jq^{A_{ij}}(1-q)^{1-A_{ij}}+(1-\alpha_j)p^{A_{ij}}(1-p)^{1-A_{ij}}\geq (1-\epsilon) \frac{p+q}{2}\indic_{A_{ij}=1} + (1-q\epsilon)\frac{1-p+1-q}{2}\indic_{A_{ij}=0}\] and \scriptsize \[ \prod_{j \in J_b}\alpha_j q^{A_{ij}}(1-q)^{1-A_{ij}}+(1-\alpha_j)p^{A_{ij}}(1-p)^{1-A_{ij}} \geq (1-\epsilon)^{\sum_{j\in J_b}A_{ij}} (1-q\epsilon)^{\sum_{j\in J_b}(1-A_{ij})}\prod_{j \in J_b}\frac{p^{A_{ij}}(1-p)^{1-A_{ij}}+q^{A_{ij}}(1-q)^{1-A_{ij}} }{2}. \]
\normalsize
\end{itemize}
Since on $\calE_3$, $\sum_{j\in J_b}A_{ij}=\Theta(n_2p)$ we obtain that \[ (1-\epsilon)^{\sum_{j\in J_b}A_{ij}}\geq 1-n_2\epsilon p=1-o(1)  \]  and similarly \[ (1-p\epsilon)^{\sum_{j\in J_b}(1-A_{ij})}\geq 1-o(1)\text{ and } (1-q\epsilon)^{\sum_{j\in J_b}(1-A_{ij})}\geq 1-o(1).\]
This implies the lower bound \[ \indic_{\calE_3}\prod_{j\in J_b}f(A_{ij}|A_{-ij},z)\geq (1-o(1))\indic_{\calE_3} \prod_{j\in J_b}g(A_{ij}).\]
Consequently, we obtain the lower bound \begin{align*}
    \expec_{z_i}&\expec_{A_{i:}|z,A_{-i:}}( \indic_{\calE_3} \phi_i(A))\\
    &\geq (1-o(1))\int_{z_i}\int_{A_{i:}}\indic_{\calE_3} \phi_i(A) \prod_{j\in J_b}g(A_{ij})\prod_{j\in J_g}f(A_{ij}|A_{-ij},z)\mathrm{d}A_{i:}\mathrm{d}\prob( z_{i}) \\
     &\gtrsim \int_{z_i}\int_{(A_{ij})_{j\in J_b}}\int_{(A_{ij})_{j\in J_g}}\prod_{j\in J_b}g(A_{ij}) \left( \phi_i(A) \prod_{j\in J_g}f(A_{ij}|A_{-ij},z)\mathrm{d}(A_{ij})_{j\in J_g} \right) \mathrm{d}(A_{ij})_{j\in J_b}\mathrm{d}\prob( z_{i})\\
    &\quad -\tilde{\prob}_{A_{i:}|A_{-i:},z}(\calE_3^c)
    \\
      &\gtrsim \int_{z_i}\int_{(A_{ij})_{j\in J_b}}\prod_{j\in J_b}g(A_{ij}) \int_{(A_{ij})_{j\in J_g}} \left( \phi_i(A) \prod_{j\in J_g}f(A_{ij}|A_{-ij},z)\mathrm{d}(A_{ij})_{j\in J_g} \right) \mathrm{d}(A_{ij})_{j\in J_b}\mathrm{d}\prob( z_{i})\\
      &\quad -\tilde{\prob}_{A_{i:}|A_{-i:},z}(\calE_3^c)\tag{since $J_b$ and $J_g$ are disjoint}
    \\
    &\gtrsim \int_{(A_{ij})_{j\in J_b}}\prod_{j\in J_b}g(A_{ij}) \underbrace{\left(\int_{z_i} \int_{(A_{ij})_{j\in J_g}}\phi_i(A) \prod_{j\in J_g}f(A_{ij}|A_{-ij},z)\mathrm{d}(A_{ij})_{j\in J_g} \right)}_{R''_i} \mathrm{d}(A_{ij})_{j\in J_b}\mathrm{d}\prob( z_{i})\\
    &\quad -\tilde{\prob}_{A_{i:}|A_{-i:},z}(\calE_3^c) \tag{since $g(A_{ij})$ is independent from $z_i$}
\end{align*}  
where $\tilde{\prob}_{A_{i:}|A_{-i:},z}$ is the conditional distribution corresponding to the density product \[\prod_{j\in J_b}g(A_{ij})\prod_{j\in J_g}f(A_{ij}|A_{-ij},z)\mathrm{d}\prob( z_{i}).\]

\paragraph{Step 5.} $R''_i$ corresponds to risk associated to the testing problem \eqref{eq:minimax_test_hyp1} where the product is restricted to the set $J_g$. One can lower bound this risk as follows. 

By definition of $T_j$, we have  when $z_i=1$ by equation \eqref{eq:cond_dist1} \[ f(A_{ij}|A_{-ij},z)= \expec_{T_j}\left(T_jp^{A_{ij}}(1-p)^{1-A_{ij}}+ (1-T_j)q^{A_{ij}}(1-q)^{1-A_{ij}}\right):=\expec_{T_j}\left(f_{T_j}(A_{ij})\right). \] A similar result holds when $z_i=-1$.
Since $T_j$ is independent of $z_i$ (it only depends on $A_{-ij}$ and $z_{-i}$ through $\alpha_j$) we can write
\[ R''_i=\expec_{T}\expec_{z_i}\underbrace{\int_{(A_{ij})_{j\in J_g}}\phi_i(A) \prod_{j\in J_g}f_{T_j}(A_{ij})\mathrm{d}(A_{ij})_{j\in J_g}}_{R'''_i}=\expec_{T}(R_i''')\] where $T=(T_j)_{j\in J_g}$.
But now $R'''_i$ is the risk associated with the following two hypothesis testing problem
\begin{align*}
     H''_0: &  \otimes_{T_j=1} \mathcal{B}(p)\otimes_{T_j=0} \mathcal{B}(q) , \text{ vs}\\
     H''_1:&  \otimes_{T_j=1} \mathcal{B}(q)\otimes_{T_j=0} \mathcal{B}(p).
 \end{align*} 
The error associated with this test is lower bounded by the error associated with the test
\begin{align*}
     H'''_0: &  \otimes_{j\in J_g} \mathcal{B}(p)\otimes_{j\in J_g} \mathcal{B}(q) , \text{ vs}\\
     H'''_1:&  \otimes_{j\in J_g} \mathcal{B}(q)\otimes_{j\in J_g} \mathcal{B}(p).
 \end{align*} 
since adding more information can only decreases the error  (more formally, the lower bound follows from the fact that we multiply by density functions inferior to one). Under $\calE_2\cap \calE_1$ we have \begin{equation}
    \label{eq:jb}
    |J_g|=\Theta(n_1n_2p)\quad \text{ and }  |J_b|=(1-o(1))n_2.
\end{equation}  
Hence, by Lemma 4 in \cite{dcsbmgao2016} the risk is lower bounded by $R_i''\rev{\geq} e^{-\Theta(n_1n_2p^2)}$. 

\paragraph{Conclusion.}  It remains to integrate over all the events we conditioned on. We have so far shown that \begin{align}
    R_i& \geq \expec_{z_{-i}}\indic_{\calE_1}\expec_{A_{-i:}|z_{-i}}\indic_{\calE_2} e^{-\Theta(n_1n_2p^2)}-\expec_{z_{-i}}\indic_{\calE_1}\expec_{A_{-i:}|z_{-i}}\indic_{\calE_2}\tilde{\prob}_{A_{i:}|A_{-i:},z}(\calE_3^c)\nonumber\\
    &\geq e^{-\Theta(n_1n_2p^2)} \expec_{z_{-i}}(\indic_{\calE_1} \prob_{A_{-i:}|z_{-i}}(\calE_2))-\expec_{z_{-i}}\indic_{\calE_1}\expec_{A_{-i:}|z_{-i}}\indic_{\calE_2}\tilde{\prob}_{A_{i:}|A_{-i:},z}(\calE_3^c).\label{eq:minimax_concl}
\end{align}
Now these probabilities can be controlled with the following lemma proved in the appendix, see Section \ref{app:proof_lem_1}.
\begin{lemma}\label{lem:cond_prob_minimax} We have 
\begin{enumerate}
    \item \ $\prob_{z_{-i}}(\calE_1)\geq 1-n_1^{-\Omega(1)}$ ;
    \item \ for all realizations $z_{-i} $, $\prob_{A_{-i:}|z_{-i}}(\calE_2)\geq 1-e^{-\Omega(n_1n_2p)}$;
    \item \ for all $z_{-i}\in \calE_1$ and $A_{-i:}\in \calE_2$, $\tilde{\prob}_{A_{i:}|A_{-i:},z}(\calE_3)\geq 1-e^{-\Omega(n_2p)}$.
\end{enumerate}
\end{lemma}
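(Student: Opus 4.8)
The plan is to prove the three bounds by separate, mostly elementary, concentration estimates. First I would record the consequences of the standing hypotheses (note $p = p_{max}$ here): from $n_2 \gg n_1\log n_1$, $n_1n_2p^2\to\infty$ and $n_1n_2p^2 = O(\log n_1)$ one gets $p = o(1)$, $n_1p = o(1)$ (since $n_1^2p^2 = O(n_1\log n_1/n_2) = o(1)$), $n_1n_2p = (n_1n_2p^2)/p\to\infty$, and $n_2p = \sqrt{(n_2/n_1)\,(n_1n_2p^2)}\to\infty$; these are precisely the quantities that appear in the target exponents.

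\textbf{First claim.} Under the Rademacher prior $|\calC_+| - |\calC_-| = \sum_{i'\neq i} z_{i'}$ is a sum of $n_1-1$ i.i.d.\ centered Rademacher variables, so Hoeffding's inequality gives $\prob_{z_{-i}}\big(\,||\calC_+|-|\calC_-||> C\sqrt{n_1\log n_1}\,\big) \le 2\exp(-C^2(\log n_1)/2) = 2 n_1^{-C^2/2}$. Choosing the constant $C$ large enough (this simultaneously fixes the constant in the definition of $\calE_1$, and hence the admissible range of $\alpha$) makes this $n_1^{-\Omega(1)}$.

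\textbf{Second claim.} Fix an arbitrary realization $z_{-i}$ and write $S_j := \sum_{i'\in\calC_+}A_{i'j} - \sum_{i'\in\calC_-}A_{i'j}$. The key sublemma is that $\prob(S_j\neq 0)\asymp n_1p$ uniformly in $z_{-i}$: for the upper bound, $\{S_j\neq 0\}$ forces column $j$ to contain at least one edge among the rows $\neq i$, an event of probability at most $(|\calC_+|+|\calC_-|)p \le n_1p$; for the lower bound, the larger of $\calC_+,\calC_-$ has at least $(n_1-1)/2$ elements and edge probability at least $q = cp$, so the event that it contains exactly one edge and the complementary set none has probability $\gtrsim n_1 p\,(1-p)^{n_1}\gtrsim n_1p$ (using $n_1p=o(1)$), and on that event $S_j=\pm1\neq 0$. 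Since conditional on $z_{-i}$ the pairs $(A_{-ij},z'_j)$ are independent across $j$, the indicators $\indic_{\{S_j\neq 0\}}$ are independent with sum of mean $\Theta(n_1n_2p)$, so a Chernoff bound gives $\prob\big(\sum_j\indic_{\{S_j\neq 0\}}\notin\Theta(n_1n_2p)\big)\le 2e^{-\Omega(n_1n_2p)}$, which is the claim because $n_1n_2p\to\infty$.

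\textbf{Third claim and the main obstacle.} Condition on $z_{-i}\in\calE_1$ and $A_{-i:}\in\calE_2$; then $J_b$ is a deterministic set with $|J_b| = (1-o(1))n_2$ by \eqref{eq:jb}. Under $\tilde{\prob}_{A_{i:}|A_{-i:},z}$ the coordinates $(A_{ij})_{j\in J_b}$ are independent $\calB((p+q)/2)$, so $\sum_{j\in J_b}A_{ij}\sim \mathrm{Bin}(|J_b|,(p+q)/2)$ has mean $\Theta(n_2p)$, and a Chernoff bound concentrates it within a constant factor of its mean with probability $1-e^{-\Omega(n_2p)}$, finishing the proof since $n_2p\to\infty$. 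The one genuinely delicate point is the uniform-in-$z_{-i}$ two-sided estimate $\prob(S_j\neq 0)\asymp n_1p$ of the second claim: it must hold for arbitrarily unbalanced $z_{-i}$ (including $|\calC_-|=0$) and must rule out accidental cancellations $\sum_{\calC_+}A_{i'j} = \sum_{\calC_-}A_{i'j}\neq 0$; both are controlled by $n_1p = o(1)$, which forces a typical column to be either empty or a single edge. Everything else is a routine Hoeffding/Chernoff computation.
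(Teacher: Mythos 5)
Your proof is correct and follows the same overall template as the paper (Hoeffding for the first claim, a per-column estimate $\prob(S_j\neq 0)\asymp n_1 p$ plus independence across $j$ and Chernoff for the second, Chernoff again for the third), but the second claim is handled by a genuinely different route. Where the paper lower-bounds $\prob(S_j\neq 0)$ by applying the Paley--Zygmund inequality to $Z=|S_j|$ (estimating $\expec Z\gtrsim n_1(p-q)$ and $\expec Z^2\asymp n_1 p$), you instead construct an explicit event forcing $S_j\neq 0$: exactly one edge in the larger of $\calC_+,\calC_-$ and no edges elsewhere, which has probability $\gtrsim n_1 p(1-p)^{n_1}\gtrsim n_1 p$ once $n_1 p=o(1)$ is observed. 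This is more elementary, avoids any moment computation, and is visibly uniform over arbitrary (including extremely unbalanced) $z_{-i}$ -- a point the second claim requires and which is slightly delicate to check through the Paley--Zygmund moment ratio when $|\calC_+|\pi_+\approx|\calC_-|\pi_-$. For the third claim you also state directly that under $\tilde{\prob}$ the $(A_{ij})_{j\in J_b}$ are i.i.d.\ $\calB\bigl((p+q)/2\bigr)$ and apply Chernoff to the binomial sum; the paper reaches the same conclusion by stochastic domination of the ``mixture'' by a $\calB(p)$, which is equivalent since a mixture of Bernoullis is again Bernoulli. Your prefatory regime deductions ($n_1 p=o(1)$, $n_1 n_2 p\to\infty$, $n_2 p\to\infty$) are exactly the ones needed to make the three Chernoff exponents nonvanishing; the paper leaves these implicit.
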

Lemma \ref{lem:cond_prob_minimax} and \eqref{eq:minimax_concl} directly imply  $R_i\gtrsim e^{-\Theta(n_1n_2p^2)}$ since $e^{-\Theta(n_2p)}=o(e^{-\Theta(n_1n_2p^2)})$ under the assumption on $p$.

When $n_1n_2p_{max}^2=\Theta (1)$, we can use the same proof except that now the final two-hypothesis testing problem we reduced to has a non vanishing risk (see e.g. \cite{minimaxsbm}).

\subsection{\rev{Proof of Theorem \ref{thm:minimax2}}}
The main idea of the proof is to reduce the problem to the case $K=L=2$, after which we can proceed along the lines of the proof of Theorem \ref{thm:minimax}. The steps are outlined below. 
\paragraph{Step 1.} Instead of considering independent vectors with marginals given by independent Rademacher random variables, we will use $z,z'$ that are random vectors with independent marginals following the uniform law on $[K]$. The argument from \cite{dcsbmgao2016} used in the proof of Theorem \ref{thm:minimax} leads to the following lower bound (for any given $i=1,\dots,n_1$)
\[ \inf_{\hat{z}} \sup_{\theta \in \Theta} \expec (r(\hat{z},z)) \gtrsim  \frac{1}{K^3}\inf_{\hat{z_i}} \expec_{z,z'} \expec_{A|z,z'}(\phi_i(A))= \frac{1}{K^3} \inf_{\hat{z_i}} \expec_{z} \expec_{A|z}(\phi_i(A)).\]
%
\paragraph{Step 2.}
Let us fix $k\neq k'\in [K]$, we then have 
\[ \phi_i(A) = \indic_{\hat{z_i}\neq z_i} \geq \tilde{\phi}_i(A):=\indic_{\lbrace \hat{z}_i=k, z_i=k'\rbrace}+\indic_{\lbrace\hat{z}_i=k', z_i=k\rbrace}.\] 
So if $z_i\not \in \lbrace k,k'\rbrace$ then $\tilde{\phi}_i(A)=0$ whatever the choice of the estimator $\hat{z_i}$. 
%
%
%
Denoting $\calS$ to be the event $ \left\lbrace z_i \in \lbrace k , k' \rbrace \right\rbrace $, we then have  
\begin{align*}
    \expec_{z_i}  \expec_{A|z}( \phi_i(A)) &\geq  \expec_{z_i}\expec_{A|z}( \tilde{\phi}_i(A))\\
    &=\prob(\calS) \expec_{z_i}\left(  \expec_{A|z}( \tilde{\phi}_i(A))|\calS \right)\\
    &=\frac{2}{K}\expec_{z_i}\left(  \expec_{A|z}( \tilde{\phi}_i(A))|\calS \right).
\end{align*}  
%
%
%
Now let us define $z''=h(z')$ as follows. If $z'_j\in  [K]\setminus \lbrace k,k'\rbrace$ then $z''_j=z'_j$. Otherwise $z''_j=*$ where ``$*$'' is any symbol that does not belong to $[K]$. In other words, $z''$ only keeps the information about the column labels that are different from $k$ and $k'$.
%
%
\begin{remark}
 Note that the point of the above construction is to reduce our problem to the case where $K = L = 2$. First, we reduce to the case where $z_i$ can take only two values, namely $k$ or $k'$. Then we `display' only the column labels which are different from $k$ and $k'$ -- those belonging to $\set{k,k'}$ are masked by the symbol `$*$'. Indeed, the columns with labels different from $k, k'$ do not provide any information for distinguishing $z_i = k$ from $z_i = k'$.
\end{remark}
Since $z''$ is independent of $z$, we obtain\footnote{By definition the distribution of $A|z,z''_j=*$ is the same as $A|z, z'_j\in \lbrace k,k' \rbrace $ and the distribution of $A|z,z''_j=l$ is the same as $A|z, z'_j=l \not\in \lbrace k,k' \rbrace $} \[ \expec_{z_i}\left(  \expec_{A|z}( \tilde{\phi}_i(A)) |\calS \right)=\expec_{z_i}\left(\expec_{z''}  \expec_{A|z,z''}( \tilde{\phi}_i(A))|\calS \right)=\expec_{z''} \expec_{z_i}\left( \expec_{A|z,z''}( \tilde{\phi}_i(A))|\calS \right).\] 
 
Conditionally on $z$, $\calS,z''$, note that the columns of $A$ are independent since $(z'_{j})_{j=1}^{n_2}$ are independent, hence $$f(A|z, \calS, z'') = \prod_j f(A_{:j}|z, \calS, z_j''),$$ i.e. $f(A|z, \calS, z'')$ is the product of the densities of the columns $A_{:j}$ conditionally on $z, \calS,$ and $z''$. 
%
%
For each $j$ we can write
\[ f(A_{:j}|z, \calS,z_j'') = f(A_{ij}|A_{-ij},z, \calS, z_j'') f(A_{-ij}|z, \calS, z_j'')=f(A_{ij}|A_{-ij},z, \calS, z_j'') f(A_{-ij}|z_{-i},z_j'') \] and $f(A_{-ij}|z_{-i},z_j'')$ doesn't depends on $z_i$. We have the following scenarios depending on the value of $z_i$.
\begin{enumerate}
\item \ \underline{Assume that $z_i=k$}. Then $A_{ij} \sim \mathcal{B}(p)$ if $\zz_{j}=k$ or $A_{ij} \sim \mathcal{B}(q)$ if $\zz_{j}\neq k$. \begin{itemize}
    \item If $z_j''=*$, we have
        \begin{align*} 
            \prob (A_{ij}=1|A_{-ij},z, \calS, z_j''=*)
            &= p \prob(\zz_{j}=k |A_{-ij},z, \calS, z''_j=* )+q \prob(\zz_{j}=k' |A_{-ij},z, \calS, z''_j=*  )\\
            &= p \underbrace{\prob(\zz_{j}=k |A_{-ij},z_{-i},z''_j=* )}_{\alpha_j}+q\underbrace{\prob(\zz_{j}=k' |A_{-ij},z_{-i},z''_j=* ) }_{1-\alpha_j}.
        \end{align*}
    \item If $z_j''\neq *$, then 
    $$\prob (A_{ij}=1|A_{-ij},z, \calS, z_j'')=q.$$
\end{itemize}

\item \ \underline{Assume that $z_i=k'$.} 
\begin{itemize}
    \item If $z_j''=*$, similar considerations as before show that 
    \[ \prob (A_{ij}=1|A_{-ij},z, \calS, z_j''=*)= q \alpha_j+p(1-\alpha_j). \]
    
    \item If $z_j''\neq *$, then $$\prob (A_{ij}=1|A_{-ij},z, \calS, z_j'')=q.$$
\end{itemize}
\end{enumerate}

%
Denote $\calJ=\lbrace j\in [n_2]: z''_j=*\rbrace$. Then by a similar argument as in the proof of Theorem \ref{thm:minimax} we can show that we have obtained a lower bound corresponding to the risk between the two-hypothesis testing problem (conditionally on $(A_{i'j})_{i'\neq i}$, $z_{-i}$ and $z''$)
\begin{equation}\label{eq:minimax_test_hyp1b}
     H_0:  \otimes_{j\in \calJ} \mathcal{B}(\alpha_jp+(1-\alpha_j)q)\otimes_{j\in \calJ^c} \mathcal{B}(q)\text{ vs }
     H_1:  \otimes_{j\in \calJ} \mathcal{B}(\alpha_jq+(1-\alpha_j)p)\otimes_{j\in \calJ^c}\mathcal{B}(q).
\end{equation} 
This is equivalent to
\begin{equation}\label{eq:minimax_test_hyp1bb}
     H_0:  \otimes_{j\in \calJ} \mathcal{B}(\alpha_jp+(1-\alpha_j)q) \text{ vs }
     H_1:  \otimes_{j\in \calJ} \mathcal{B}(\alpha_jq+(1-\alpha_j)p).
\end{equation}  
By using a conditioning argument on an event that occurs with high probability as in the proof of Theorem \ref{thm:minimax}, one can assume that $|\calJ|=(2+o(1))n_2/K$. We have reduced to the problem of testing whether $\tilde{z_i}=k$ or $k'$ with the knowledge of all the values of $z'$ that are different from $k$ and $k'$.
Then, to obtain the stated result it remains to apply the proof of Theorem \ref{thm:minimax} from Step 3 with \begin{align*}
    \calC_+&=\lbrace i'\neq i: z_{i'}=k\rbrace,\\
    \calC_-&=\lbrace i'\neq i: z_{i'}=k'\rbrace,\\
    \theta_j&=\frac{\alpha_j}{1-\alpha_j}\text{ for } j=1\ldots n_2,\\
    \epsilon&= \Theta(p\sqrt{n_1\log n_1})=o(1),\\
    J_b&=\lbrace j\in \calJ : \theta_j\in [1-\epsilon, 1+\epsilon]\rbrace,\\ 
    J_g&=\lbrace j\in \calJ : \theta_j\notin [1-\epsilon, 1+\epsilon]\rbrace,\\
    T_j&\overset{ind.}{\sim}\calB(\alpha_j), \text{ for all } j\in J_g,
\end{align*}
 and the events
 \begin{align*}
      \calE_1 &=\left\lbrace  |\calC_+|-|\calC_-|\in [-C\sqrt{n_1/K\log (n_1/K)}, C\sqrt{n_1/K\log (n_1/K)}]\right\rbrace \text{ for some constant }C>0,\\
       \calE_2 &=\left\lbrace  \sum_j \indic_{\lbrace \sum_{i'\in \calC_+}A_{i'j}-\sum_{i'\in \calC_-}A_{i'j}\neq 0 \rbrace} = \Theta( n_2n_1p/K^2) \right\rbrace,\\
        \calE_3&=\left\lbrace \sum_{j\in J_b}A_{ij} = \Theta(n_2p/K) \right\rbrace .
 \end{align*}
 
 \paragraph{Steps 4 and 5.} It is easy to check that \[ \theta_j=\frac{\alpha_j}{1-\alpha_j}= \frac{\prod_{i' \neq i: z_{i'}=k}p^{A_{i'j}}(1-p)^{1-A_{i'j}} \prod_{i' \neq i: z_{i'}=k'}q^{A_{i'j}}(1-q)^{1-A_{i'j}} }{\prod_{i' \neq i: z_{i'}=k}q^{A_{i'j}}(1-q)^{1-A_{i'j}} \prod_{i' \neq i: z_{i'}=k'}p^{A_{i'j}}(1-p)^{1-A_{i'j}}}.\] By using the same argument as in the proof of Theorem \ref{thm:minimax} the error associated with the test \eqref{eq:minimax_test_hyp1bb} can be lower bounded by
 \begin{equation}\label{eq:minimax_test_hyp2bb}
     H_0:  \otimes_{j\in \calJ_g } \mathcal{B}(p) \text{ vs }
     H_1:  \otimes_{j\in \calJ_g } \mathcal{B}(q).
\end{equation}  
We can conclude by using a similar conditioning argument so that $|\calJ_g|=\Theta(\frac{n_1n_2p^2}{K^2})$ and the risk of the test \eqref{eq:minimax_test_hyp2bb} is lower bounded by $e^{-\Theta(\frac{n_1n_2p^2}{K^2})}$. The stated result is obtained by using the same steps as in Theorem \ref{thm:minimax} and multiplying by $2/K$ (see Step 2). This factor could possibly be removed by lower-bounding $\phi_i(A)$ by the sum over all $k'\neq k$ of functions of the form $\tilde{\phi}_i(A)$. But when $\frac{n_1n_2p^2}{K^2}\to \infty$, the factor $2/K$ can be absorbed by the $\Theta(\frac{n_1n_2p^2}{K^2})$. 
This improvement is only interesting when $\frac{n_1n_2p^2}{K^2}=\Theta(1)$.
\color{black}
\section{Numerical experiments}\label{sec:xp}
In this section, we empirically compare the performance of our algorithm (\gpm) with the spectral algorithm (\spec)  and the algorithm introduced by \cite{Ndaoud2021ImprovedCA} (referred to as \hl). 

\paragraph{Case $K=L=2$.} In this setting, we generate a SBiSBM with parameters $n_1=500$, $n_2=\ceil{Cn_1\log n_1}$ (where $C\geq 1$ is a constant), $p \in (0,1)$, and $q=c p$ where $c>0$ is a constant. 
The accuracy of the clustering is measured by the Normalized Mutual Information (NMI); it is equal to one when the partitions match exactly and is zero when they are independent. The results are averaged over $20$ Monte-Carlo runs.
For the experiment presented in Figure \ref{fig:xp1}, we fixed $C=10$; for the experiment in Figure \ref{fig:xp2}, we fixed $C=3$ and $c=0.5$. For the experiment presented in Figure \ref{fig:xp3}, we fixed $p=0.01$ and $c=0.5$. We observe that \hl\, and \gpm\, have similar performance in all the aforementioned experiments. Thus, there is no gain in using the specialized method \hl\, instead of the general algorithm \gpm. The spectral method \spec has only slightly worst performance than the iterative methods \hl\, and \gpm. In particular, when approaching the threshold for exact recovery, the performance gap disappears. This suggests that \spec\, also reaches the threshold for exact recovery. It would be interesting to obtain stronger theoretical guarantees to explain the good performance of \spec.

 \begin{figure*}
        \centering
        \begin{subfigure}[b]{0.475\textwidth}
        \centering
         \includegraphics[width=\textwidth]{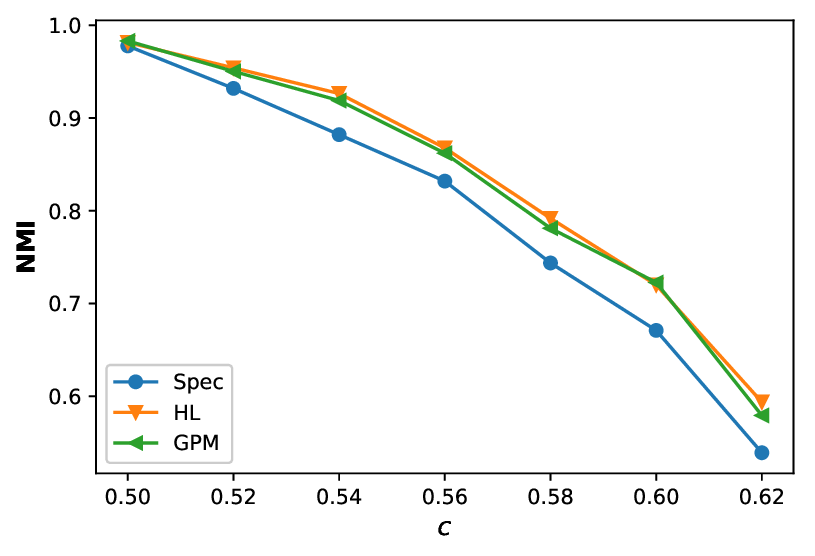}
            \caption[Network2]%
            {{\small Relative performance of \spec, \hl\, and \gpm\, for varying $c$. }}    
            \label{fig:xp1}
        \end{subfigure}
        \hfill
        \begin{subfigure}[b]{0.475\textwidth}  
            \centering 
            \includegraphics[width=\textwidth]{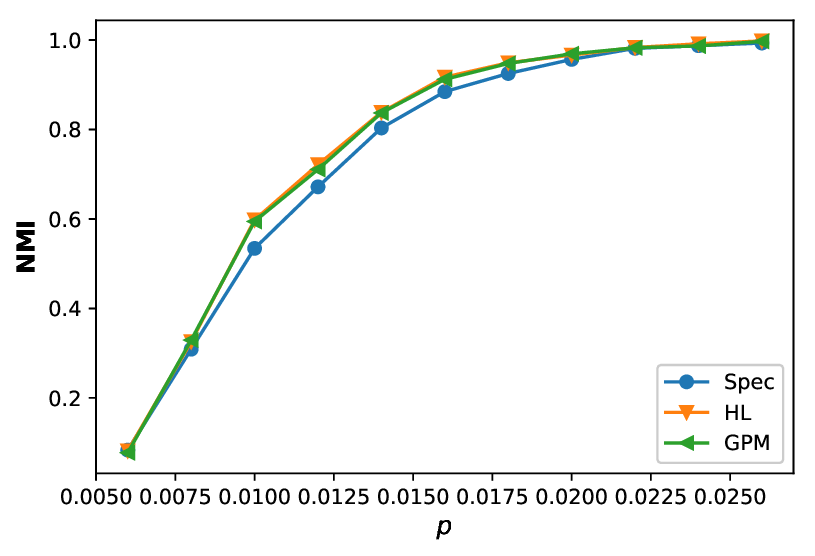}
            \caption[]%
            {{\small Relative performance of \spec, \hl\, and \gpm\, for varying $p$.}}    
            \label{fig:xp2}
        \end{subfigure}
        \vskip\baselineskip
        \begin{subfigure}[b]{0.475\textwidth}   
            \centering 
            \includegraphics[width=\textwidth]{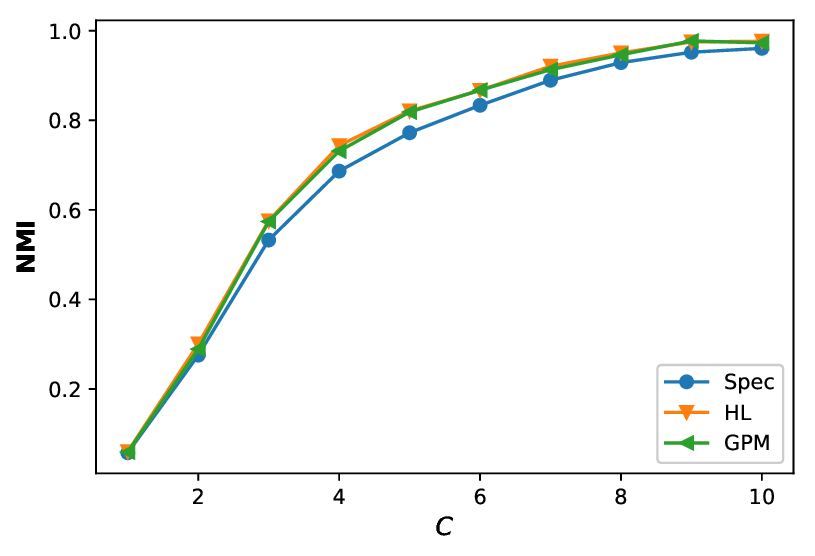}
            \caption[]%
            {{\small Relative performance of \spec, \hl\, and \gpm\, for varying $C$.}}    
            \label{fig:xp3}
        \end{subfigure}
        \hfill
        \begin{subfigure}[b]{0.475\textwidth}   
            \centering 
            \includegraphics[width=\textwidth]{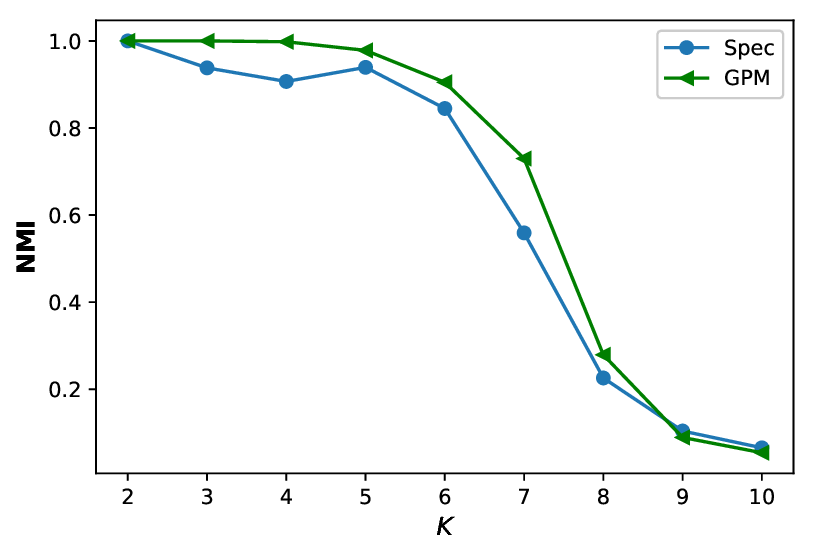}
            \caption[]%
            {{\small Relative performance of \spec\, and \gpm\, for varying $K$.}}    
            \label{fig:xp4}
        \end{subfigure}
        \caption[ ]
        {\small Relative performance of \spec, \hl\, and \gpm\, for varying parameters} 
        \label{fig:mean and std of nets}
    \end{figure*}

\paragraph{Case $K=L\neq 2$.}
We fix $n_1=1000, n_2=10000, p=0.05$ and $c=0.5$ and vary $K$ from $2$ to $10$. As can be seen from Figure \ref{fig:xp4}, the performance of \spec\, decreases faster than \gpm\, when $K$ increases.

\rev{For these experiments, we assumed that the number of communities $K$ was known. In practice, one can try to estimate the number of clusters by using the simple and popular elbow method which chooses a $K$ that maximizes the spectral gap $\lambda_k(B)- \lambda_{k+1}(B)$ (recall $B$ from Algorithm \ref{alg:spec}). 
From a more theoretical point of view, it would be interesting to see if the selection method proposed by \cite{zheng20}, which also comes with optimality guarantees, can be extended to the BiSBM setting.}
\section{Conclusion and future work}
In this work, we proposed an algorithm based on the GPM to cluster the rows of bipartite graphs in the high-dimensional regime where $n_2\gg n_1\log n_1$. We analyzed our algorithm under a relatively general BiSBM, that incorporates as a special case the model with $K=L=2$ studied recently by \cite{Ndaoud2021ImprovedCA}. When specialized to the SBiSBM with $K=L=2$ communities, our rate of convergence matches the one obtained by \cite{Ndaoud2021ImprovedCA}. We also extend the aforementioned  work in another direction by showing that the derived rate of convergence of the GPM for the SiSBM (with $K=L=2$)  is minimax optimal.

We only considered binary bipartite graphs in this work, but our algorithm could possibly be extended to the weighted case. Some heterogeneity amongst blocks could also be introduced, similar to the Degree-Corrected SBM. Another interesting direction would be to improve the theoretical guarantees obtained for the spectral method as we observed that it has very good performance in practice. \rev{A natural way do that would be to adapt the entrywise eigenvector techniques developed by \cite{abbe20} to the BiSBM setting.}


\bibliography{references}
\clearpage
\appendix
\begin{center}
\Large \textbf{Supplementary Material}
\end{center}

\section{Useful inequalities}
\begin{lemma}\label{lem:ht}
For all $z, z' \in [K]^{n_1}$ we have \[ h(z,z') \leq \frac{l(z,z')}{\Delta_{min}^2}\] where $h(z,z')= \sum_{i\in [n_1]}\indic_{z_i \neq z'_i} $ is the Hamming distance between $z$ and $z'$.
\end{lemma}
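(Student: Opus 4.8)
The plan is to exploit the fact that $l(z,z')$ is nothing but a weighted Hamming distance in which every nonzero contribution carries a weight bounded below by $\Delta_{min}^2$. First I would recall the two definitions: by construction $l(z,z') = \sum_{i\in[n_1]} \Delta^2(z_i,z'_i)\,\indic_{\{z_i\neq z'_i\}}$ and $\Delta_{min} = \min_{a\neq b\in[K]}\Delta(a,b)$. Consequently, for every index $i$ with $z_i\neq z'_i$ we have $\Delta^2(z_i,z'_i)\geq \Delta_{min}^2$, since the pair $(z_i,z'_i)$ is one of the pairs over which the minimum is taken.

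Then I would simply bound the defining sum term by term, replacing each nonzero summand by its minimal possible value:
\[
l(z,z') \;=\; \sum_{i\in[n_1]} \Delta^2(z_i,z'_i)\,\indic_{\{z_i\neq z'_i\}} \;\geq\; \Delta_{min}^2 \sum_{i\in[n_1]} \indic_{\{z_i\neq z'_i\}} \;=\; \Delta_{min}^2\, h(z,z').
\]
Dividing through by $\Delta_{min}^2$ gives $h(z,z')\leq l(z,z')/\Delta_{min}^2$, which is the claim. The division is legitimate because $\Delta_{min}^2>0$: indeed, by \eqref{eq:delta_zik} (a consequence of Assumptions \ref{ass:balanced_part} and \ref{ass:diag_dom}) we have $\Delta^2(a,b)\geq \beta n_2 p_{max}^2/(\alpha L)>0$ for all $a\neq b\in[K]$, so the minimum $\Delta_{min}^2$ is strictly positive.

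There is essentially no obstacle in this argument; the only point worth an explicit remark is the strict positivity of $\Delta_{min}^2$ needed to divide, which is immediate from the model assumptions already in force. The lemma is used downstream to convert the bound on $l(z^{(t)},z)$ produced by the contraction estimate of Theorem \ref{thm:gao_ext} into a bound on the number of misclustered nodes $h(z^{(t)},z)$, and hence on the misclustering rate $r(z^{(t)},z)=h(z^{(t)},z)/n_1$.
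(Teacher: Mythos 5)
Your proof is correct and follows essentially the same route as the paper's: you bound each nonzero summand of $l(z,z')$ below by $\Delta_{min}^2$ and rearrange, which is exactly the one-line estimate in the paper (the paper just writes the chain of inequalities starting from $h$ instead of $l$). Your extra remark on the strict positivity of $\Delta_{min}^2$ is a harmless and sensible precaution but not a departure in method.
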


\begin{proof}
For all $z,z'$ we have \[ \sum_{i \in [n_1]} \indic_{z_i \neq z'_i} \leq \sum_{i \in [n_1]} \frac{\Delta(z_i, z_i')^2}{\Delta_{min}^2}\indic_{z_i \neq z_i'} =  \frac{l(z,z')}{\Delta_{min}^2}. \]
\end{proof}

\begin{lemma}\label{lem:nt} Assume that for some $\alpha \geq 1$ we have \[  \frac{n_1}{\alpha K} \leq |\calC_k| \leq \frac{\alpha n_1}{K} \text{ for all } k\in [K].\] 
Let $\calC_k^{(t)}=\lbrace i\in [n_1]: z^{(t)}=k\rbrace$ be the nodes associated with the community $k$ at some iteration $t \geq 0$, with $z^{(t)} \in [K]^{n_1}$. 
If $l(z,z^{(t)}) \leq  n_1\Delta_{min}^2/(2\alpha K)$ then for all $k \in [K]$,
\[  \frac{ n_1}{2\alpha K} \leq |\calC_k^{(t)}| \leq \frac{2\alpha n_1}{K}.\]
\end{lemma}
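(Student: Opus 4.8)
The plan is to transfer the hypothesis on $l(z,z^{(t)})$ into a bound on the Hamming distance $h(z,z^{(t)})$ via Lemma \ref{lem:ht}, and then argue that moving at most $h(z,z^{(t)})$ labels cannot change any cluster size by more than $h(z,z^{(t)})$. First I would apply Lemma \ref{lem:ht} to get
\[
h(z,z^{(t)}) \leq \frac{l(z,z^{(t)})}{\Delta_{min}^2} \leq \frac{n_1}{2\alpha K},
\]
using the assumed bound on $l(z,z^{(t)})$. This is the one nontrivial input; everything afterward is counting.

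Next I would observe that for any fixed community index $k$, the symmetric difference between $\calC_k$ and $\calC_k^{(t)}$ is contained in the set of nodes $i$ with $z_i \neq z^{(t)}_i$, so $\bigl| \, |\calC_k^{(t)}| - |\calC_k| \, \bigr| \leq h(z,z^{(t)})$. Combining this with the balancedness assumption $\frac{n_1}{\alpha K} \leq |\calC_k| \leq \frac{\alpha n_1}{K}$ gives, for the upper bound,
\[
|\calC_k^{(t)}| \leq |\calC_k| + h(z,z^{(t)}) \leq \frac{\alpha n_1}{K} + \frac{n_1}{2\alpha K} \leq \frac{2\alpha n_1}{K},
\]
where the last step uses $\alpha \geq 1$ so that $\frac{1}{2\alpha} \leq \frac{\alpha}{2} \leq \alpha$. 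For the lower bound, similarly
\[
|\calC_k^{(t)}| \geq |\calC_k| - h(z,z^{(t)}) \geq \frac{n_1}{\alpha K} - \frac{n_1}{2\alpha K} = \frac{n_1}{2\alpha K}.
\]

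There is essentially no hard step here: the only thing to be careful about is that the permutation aligning $z^{(t)}$ with $z$ has already been fixed to the identity (as arranged at the start of Section \ref{sec:gpm_analysis}'s analysis), so that "community $k$" refers to the same label on both sides and the symmetric-difference argument is legitimate; this is why the statement can speak of $|\calC_k^{(t)}|$ versus $|\calC_k|$ for the \emph{same} $k$ without a minimizing permutation. If anything, the mild bookkeeping point is just checking the inequality $\frac{\alpha n_1}{K} + \frac{n_1}{2\alpha K} \leq \frac{2\alpha n_1}{K}$, which reduces to $\frac{1}{2\alpha} \leq \alpha$, true for all $\alpha \geq 1$.
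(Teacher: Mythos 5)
Your proof is correct and follows essentially the same route as the paper: bound the Hamming distance via Lemma \ref{lem:ht} using the hypothesis on $l(z,z^{(t)})$, then note that relabeling $h(z,z^{(t)})$ nodes can shift each cluster size by at most $h(z,z^{(t)})$, and conclude with the balancedness assumption. The only cosmetic difference is that you phrase the counting step via the symmetric difference $\calC_k \triangle \calC_k^{(t)}$ (and spell out both the upper and lower bounds), whereas the paper writes the inclusion $\calC_k \cap \calC_k^{(t)} \subseteq \calC_k^{(t)}$ and leaves the upper bound to the reader.
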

\begin{proof}
This is a standard result, we recall the proof for completeness. Since for all $k \in [K]$ we have $n_1/(\alpha  K) \leq |\calC_k| \leq \alpha n_1/K$, \begin{align*}
    \sum_{i \in \calC_k^{(t)}}1 \geq \sum_{i \in \calC_k \cap \calC_k^{(t)}}1 &\geq \sum_{i \in \calC_k}1-\sum_{i \in [n_1]}\indic_{z_i \neq z_i^{(t)}}\\
    & \geq  \frac{n_1}{\alpha K} - h(z,z^{(t)}) 
    \\ & \overset{\text{Lemma }\ref{lem:ht}}{\geq } \alpha \frac{n_1}{K} - \frac{l(z,z^{(t)})}{\Delta_{min}^2}\\
    &\geq  \frac{\alpha n_1}{2K}
\end{align*}
by assumption. The other inequality can be proved in a similar way.
\end{proof}

\begin{lemma}\label{lem:wt}Assume that $A \sim BiSBM(Z_1,Z_2,\Pi)$ with approximately balanced communities.  Then for all $t \geq 0$ and $z^{(t)}$ such that $\lt \leq  n_1\Delta_{min}^2/(2\alpha K)$, the following holds.
\begin{enumerate}
    \item $ \max_{k\in [K]}|| W^{(t)}_{:k}-W_{:k}|| \lesssim \frac{(\alpha K)^{1.5}}{n_1^{1.5}\Delta_{min}^2} l(z^{(t)},z),$
    \item $||Z_1^\top W^{(t)}|| \lesssim 1 $.
\end{enumerate}
\end{lemma}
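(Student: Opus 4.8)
The plan is to use that $W$ and $W^{(t)}$ are column-normalised membership matrices, $W_{:k} = |\calC_k|^{-1}\mathbf{1}_{\calC_k}$ and $W^{(t)}_{:k} = |\calC_k^{(t)}|^{-1}\mathbf{1}_{\calC_k^{(t)}}$ (writing $\calC_k^{(t)} = \{i : z^{(t)}_i = k\}$), and to control everything through the Hamming distance $h := h(z,z^{(t)}) = \sum_i \mathbf{1}_{z_i \neq z^{(t)}_i}$. By Lemma~\ref{lem:ht}, $h \leq l(z^{(t)},z)/\Delta_{min}^2$, so the standing hypothesis $l(z^{(t)},z) \leq n_1\Delta_{min}^2/(2\alpha K)$ simultaneously yields $h \leq l(z^{(t)},z)/\Delta_{min}^2$ and $h \leq n_1/(2\alpha K)$; the latter lets us invoke Lemma~\ref{lem:nt} to get $|\calC_k^{(t)}| \geq n_1/(2\alpha K)$ for every $k$, while Assumption~\ref{ass:balanced_part} gives $|\calC_k| \in [n_1/(\alpha K), \alpha n_1/K]$.

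For part~1, fix $k$ and split the coordinates into the blocks $\calC_k \cap \calC_k^{(t)}$, $\calC_k \setminus \calC_k^{(t)}$, $\calC_k^{(t)} \setminus \calC_k$, and the remainder (where $W^{(t)}_{ik} = W_{ik} = 0$). On the first block the entrywise difference equals $|\calC_k^{(t)}|^{-1} - |\calC_k|^{-1}$, whose modulus is $\big||\calC_k| - |\calC_k^{(t)}|\big| / (|\calC_k|\,|\calC_k^{(t)}|) \leq h / (|\calC_k|\,|\calC_k^{(t)}|)$, since $\big||\calC_k|-|\calC_k^{(t)}|\big| \leq \max\{|\calC_k\setminus\calC_k^{(t)}|,\,|\calC_k^{(t)}\setminus\calC_k|\} \leq h$; on the second and third blocks it equals $|\calC_k|^{-1}$ resp.\ $|\calC_k^{(t)}|^{-1}$ in modulus, and each of those blocks has at most $h$ elements. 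Squaring, summing and inserting the size lower bounds reduces the quantity to a routine estimate polynomial in $h$, $\alpha K$, $n_1^{-1}$ and $\Delta_{min}^{-1}$; substituting $h \leq l(z^{(t)},z)/\Delta_{min}^2$, and for the lower-order contributions also the hypothesis on $l(z^{(t)},z)$ to trade a leftover factor of $h$ for $n_1/(2\alpha K)$, then gives the estimate in part~1 after maximising over $k$.

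For part~2, set $M := Z_1^\top W^{(t)}$, so $M_{kk'} = |\calC_k \cap \calC_{k'}^{(t)}| / |\calC_{k'}^{(t)}|$, and decompose $M = \diag(M) + (M - \diag(M))$. Each diagonal entry lies in $[0,1]$ because $|\calC_k \cap \calC_k^{(t)}| \leq |\calC_k^{(t)}|$, so $\norm{\diag(M)} \leq 1$. For the off-diagonal part I estimate the Frobenius norm,
\[
\norm{M - \diag(M)}_F^2 = \sum_{k \neq k'} M_{kk'}^2 \leq \left(\frac{2\alpha K}{n_1}\right)^{2}\left(\sum_{k \neq k'} |\calC_k \cap \calC_{k'}^{(t)}|\right)^{2} = \left(\frac{2\alpha K}{n_1}\right)^{2} h^2,
\]
using $|\calC_{k'}^{(t)}| \geq n_1/(2\alpha K)$ and the observation that $\sum_{k \neq k'} |\calC_k \cap \calC_{k'}^{(t)}|$ counts precisely the misclustered nodes, hence equals $h$. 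Since $h^2 \leq h \cdot \frac{n_1}{2\alpha K} \leq \frac{n_1}{2\alpha K} \cdot \frac{l(z^{(t)},z)}{\Delta_{min}^2} \leq \left(\frac{n_1}{2\alpha K}\right)^2$, we get $\norm{M - \diag(M)}_F \leq 1$, and the triangle inequality gives $\norm{Z_1^\top W^{(t)}} \leq 2 \lesssim 1$.

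The only place that needs care is the bookkeeping in part~1: each of the three non-trivial blocks contributes a term that is quadratic or linear in $h$, and one must decide block by block whether to spend the budget $h \leq l(z^{(t)},z)/\Delta_{min}^2$ or the budget $h \leq n_1/(2\alpha K)$ so that the exponents of $\alpha K$, $n_1$, $\Delta_{min}$ and $l(z^{(t)},z)$ match the claim; everything else — in particular part~2, once one notices that $M$ is column-stochastic and concentrated on its diagonal — is immediate.
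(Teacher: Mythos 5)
For part~2, your argument is fine and is a small variant of the paper's: the paper writes $\norm{Z_1^\top W^{(t)}} \leq 1 + \norm{I - Z_1^\top W^{(t)}}$ and bounds the latter by $\sqrt{2}\sum_k \delta_k$ with $\delta_k = 1-(Z_1^\top W^{(t)})_{kk}$, whereas you split into $\diag(M)$ plus off-diagonal; both use $|\calC_{k'}^{(t)}|^{-1}\lesssim \alpha K/n_1$ and the hypothesis on $l$ to conclude.

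For part~1, however, your route is genuinely different from the paper's (the paper argues through $\norm{W^{(t)}-W}\le n_{\min}^{-1/2}\norm{I-Z_1^\top W^{(t)}}_F$ and then bounds the Frobenius norm of $I-Z_1^\top W^{(t)}$, while you work entrywise), and the ``routine estimate'' you defer does not actually close. Concretely, the blocks $\calC_k\setminus\calC_k^{(t)}$ and $\calC_k^{(t)}\setminus\calC_k$ contribute
\[
\frac{|\calC_k\setminus\calC_k^{(t)}|}{|\calC_k|^2}+\frac{|\calC_k^{(t)}\setminus\calC_k|}{|\calC_k^{(t)}|^2}
\;\lesssim\; \frac{(\alpha K)^2\,h}{n_1^2}
\]
to $\norm{W^{(t)}_{:k}-W_{:k}}^2$, so the column norm is of order $\alpha K\sqrt{h}/n_1$. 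To reach the claimed bound one needs $\alpha K\sqrt{h}/n_1 \lesssim (\alpha K)^{1.5} l/(n_1^{1.5}\Delta_{\min}^2)$, and since $l\ge h\Delta_{\min}^2$ the most favorable case $l=h\Delta_{\min}^2$ forces $h\gtrsim n_1/(\alpha K)$ — but the standing hypothesis gives $h\le n_1/(2\alpha K)$, the opposite inequality. Your proposed ``trade a leftover factor of $h$ for $n_1/(2\alpha K)$'' is exactly the step that would be needed, but it runs in the wrong direction: $h\le n_1/(2\alpha K)$ lets you \emph{upgrade} $h^2$ to $h\cdot n_1/(2\alpha K)$, not the reverse. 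A single transposed label ($h=1$, $l=\Delta_{\min}^2$) already gives $\norm{W^{(t)}_{:k}-W_{:k}}\gtrsim K/n_1$, exceeding the claimed $(\alpha K)^{1.5}/n_1^{1.5}$ for $n_1$ large, so the mismatch is not a constant-factor issue that bookkeeping can absorb. This is worth flagging because the paper's own proof sidesteps the blockwise computation via the inequality $\norm{W^{(t)}-W}\le n_{\min}^{-1/2}\norm{I - Z_1^\top W^{(t)}}_F$, which in turn presumes that $W^{(t)}-W$ lies in $\range(Z_1)$; it does not, since $W^{(t)}$ is supported on $\range(Z_1^{(t)})$, so your honest entrywise calculation is actually surfacing a real problem with the stated bound rather than committing an error of its own.
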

%
%


\begin{proof} The proof is an adaptation of Lemma $4$ in \cite{Han2020ExactCI}, see also Lemma $14$ in \cite{braun2021iterative}. Since we consider a slightly different setting, we outline the details for completeness.

Let $n_{min}=\min_k |\calC_k|$. First observe that $Z_1$ is rank $K$ and $\lambda_K(Z)= \sqrt{n_{min}}$ so that 
\[ || W^{(t)}_{:k}-W_{:k}|| \leq || W^{(t)}-W|| \leq n_{min}^{-1/2} ||I-Z_1^\top W^{(t)} ||_F.\] For any $k \in [K]$, denote $\delta_k=1-(Z_1^\top W^{(t)})_{kk}$. Since for all $k, k' \in [K]$ \[ (Z_1^\top W^{(t)})_{kk'}=\frac{\sum_{i \in \calC_k}\indic_{z_i^{(t)}=k'}}{|\calC_{k'}^{(t)}|},\] we have \[ 0 \leq \delta_k \leq 1, \quad \sum_{k'\in [K]\backslash k} (Z_1^\top W^{(t)})_{k'k} = \delta_k .\] Therefore, 
\begin{align*}
    ||Z_1^\top W^{(t)}-I ||_F & = \sqrt{\sum_{k \in [K]} \left( \delta_k^2+ \sum_{k'\in [K]\backslash k} (Z_1^\top W^{(t)})_{k'k}^2 \right)}\\
    & \leq \sqrt{\sum_{k \in [K]} \left( \delta_k^2+ \left(\sum_{k'\in [K]\backslash k} (Z_1^\top W^{(t)})_{k'k}\right)^2 \right)}\\
    & \leq \sqrt{2\sum_{k \in [K]} \delta_k^2} \leq \sqrt{2}\sum_{k \in [K]}\delta_k \\
    & = \sqrt{2} \sum_{k\in [K]}\frac{\sum_{i \in \calC^{(t)}_k} \indic_{z_i \neq k}}{|\calC_{k'}^{(t)}|}\\
    & \leq \sqrt{2}\max_k(|\calC_{k'}^{(t)}|)^{-1} \sum_{i \in [n]}\indic_{z_i \neq z_i^{(t)}}\\
    & \overset{\text{Lemma \ref{lem:nt}}}{\lesssim } \frac{K\alpha}{n_1}h(z,z^{(t)}) \overset{\text{Lemma \ref{lem:ht}} }{\lesssim }  K\alpha \frac{l(z,z^{(t)})}{n_1 \Delta_{min}^2} \numberthis \label{eq:izw}.
\end{align*}

For the second inequality, observe that since $Z_1^\top W =I_K$ we have \begin{align*}
        ||Z_1^\top W^{(t)}|| &\leq 1 + || Z_1^\top (W^{(t)}-W)||\\
        & = 1 + ||I - Z_1^\top W^{(t)}||\\
        & \lesssim 1+\alpha K\frac{\lt}{n_1\Delta^2_{min}} \tag{by Equation \eqref{eq:izw}}\\
        &\lesssim 1 \tag{by assumption on $\lt$}.
    \end{align*}
\end{proof}

\section{Concentration inequalities}
\subsection{General results}
\begin{lemma}
\label{lem:conc_mat} Assume that $A \sim BiSBM(Z_1,Z_2,\Pi)$, and recall 
$E=A-\expec(A)$. For $n_1$ large enough, the following holds true.
\begin{enumerate}
    \item $\prob(||E||\lesssim \sqrt{n_2p_{max}}) \geq 1-n_1^{-\Omega(1)}$ if $n_2 p_{\max} \gtrsim \log n_2$ and $n_2 \geq n_1$.
    
    \item $\prob(||EE^\top -\expec(EE^\top)|| \lesssim   \max(\log n_1, \sqrt{n_1n_2}p_{max})) \geq 1-n_1^{-\Omega(1)} $. Moreover, this event implies $\norm{\calH(EE^\top)} \lesssim \max(\log n_1, \sqrt{n_1n_2}p_{max}))$.
    
    \item $\prob(||EZ_2|| \lesssim \sqrt{\rev{\alpha}n_1n_2p_{max}/L}) \geq 1-n_1^{-\Omega(1)}$ if $\rev{\alpha}n_2 p_{\max} \gtrsim L \log n_1$ and $\sqrt{\frac{\rev{\alpha} n_1 n_2 p_{\max}}{L}} \gtrsim \log n_1$.
\end{enumerate}
\end{lemma}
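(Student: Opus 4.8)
The plan is to handle the three parts separately, relying in each case on standard concentration tools for random matrices with independent, bounded, centered entries. For part 1, I would invoke a Bernstein-type bound for the spectral norm of a random rectangular matrix with independent entries (e.g.\ the matrix Bernstein inequality, or the bound of Bandeira--Van Handel): each entry $E_{ij}$ is centered, bounded by $1$ in absolute value, and has variance $p_{ij}(1-p_{ij}) \le p_{\max}$. The row and column variance sums are then at most $n_2 p_{\max}$ and $n_1 p_{\max} \le n_2 p_{\max}$ respectively, so the subgaussian part of the bound is $\sqrt{n_2 p_{\max}}$; the condition $n_2 p_{\max} \gtrsim \log n_2$ (together with $n_2 \ge n_1$) ensures this dominates the ``$\log$'' correction term coming from the boundedness, yielding $\norm{E} \lesssim \sqrt{n_2 p_{\max}}$ with probability $1 - n_1^{-\Omega(1)}$ (after tuning the constant in the tail so the probability is expressed in terms of $n_1$; since $n_2 \ge n_1$ this is immediate).

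For part 3, I would write $EZ_2$ as a sum over the $L$ column-blocks: the $\ell$-th column of $EZ_2$ is $\sum_{j \in \calC'_\ell} E_{:j}$, a sum of $|\calC'_\ell| \asymp n_2/L$ independent centered bounded vectors. Equivalently, $EZ_2$ is itself an $n_1 \times L$ matrix with independent centered entries bounded by $\sqrt{|\calC'_\ell|} \lesssim \sqrt{n_2/L}$ in a suitable sense; more cleanly, apply matrix Bernstein to the sum $\sum_{j} E_{:j} (Z_2)_{j:}$ of independent rank-one terms. The variance proxy is controlled by $\norm{\sum_j \expec[E_{:j}E_{:j}^\top] \otimes (\text{diag})} \lesssim n_1 n_2 p_{\max}/L$ on one side and $\norm{\sum_j (Z_2)_{j:}^\top (Z_2)_{j:} \expec[E_{ij}^2]} \lesssim n_2 p_{\max} \cdot \max_\ell |\calC'_\ell| / n_1 \cdot n_1$, so the subgaussian term is $\sqrt{n_1 n_2 p_{\max}/L}$; the hypotheses $n_2 p_{\max} \gtrsim L\log n_1$ and $\sqrt{n_1 n_2 p_{\max}/L} \gtrsim \log n_1$ guarantee this term dominates the boundedness correction, giving the claim.

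Part 2 is the crux and, I expect, the main obstacle: $EE^\top$ has \emph{dependent} entries, and its diagonal $\sum_j E_{ij}^2$ has expectation of order $n_2 p_{\max}$, which is exactly why we must subtract $\expec[EE^\top]$ (and why the hollowed version $\calH(EE^\top)$ appears). The approach: write $EE^\top - \expec[EE^\top] = \sum_{j=1}^{n_2} (E_{:j}E_{:j}^\top - \expec[E_{:j}E_{:j}^\top])$, a sum of $n_2$ independent, centered, self-adjoint $n_1 \times n_1$ matrices. Each summand has operator norm at most $\norm{E_{:j}}^2 \le n_1$ (deterministically), but this crude bound is too lossy; instead I would either (i) apply a truncated/Bernstein argument using that $\norm{E_{:j}}^2 \lesssim n_1 p_{\max} + \log n_1$ with high probability, so the effective per-term bound is $\max(n_1 p_{\max}, \log n_1)$, and estimate the matrix variance $\norm{\sum_j \expec[(E_{:j}E_{:j}^\top)^2]} \lesssim n_1 n_2 p_{\max}^2 \cdot (\text{something})$, or (ii) more robustly, appeal to the concentration results for $\norm{EE^\top - \expec EE^\top}$ that follow from the general machinery for products of random matrices (this is presumably where ``part 3 of Lemma \ref{lem:conc_mat}'' — advertised in the introduction as a new concentration inequality for centered Binomial matrices — does the real work, so I would state and prove that auxiliary bound first and then specialize). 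Combining the variance term $\sqrt{n_1 n_2 p_{\max}^2} \cdot \sqrt{\cdots}$ and the boundedness term $\max(n_1 p_{\max}, \log n_1)$, and simplifying under the ambient sparsity regime, yields $\norm{EE^\top - \expec EE^\top} \lesssim \max(\log n_1, \sqrt{n_1 n_2} p_{\max})$. Finally, since $\calH(EE^\top) = (EE^\top - \expec EE^\top) - \diag(EE^\top - \expec EE^\top) + (\expec EE^\top - \diag \expec EE^\top)$ and $\expec EE^\top$ is itself nearly diagonal-dominant in a way that cancels, the diagonal removal only changes the bound by a constant factor, giving $\norm{\calH(EE^\top)} \lesssim \max(\log n_1, \sqrt{n_1 n_2}p_{\max})$ on the same event. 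The delicate points to get right are the exact form of the matrix-variance parameter for $\sum_j E_{:j}E_{:j}^\top$ and ensuring the $\log n_1$ term (rather than $\log n_2$) appears in the final bound, which requires using $n_2 \gg n_1$ to absorb $\log n_2$ into the polynomial factors.
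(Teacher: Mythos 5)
Your part 1 is fine and matches the paper: the paper also reduces to a symmetric dilation and cites Bandeira--Van Handel to get $\norm{E}\lesssim\sqrt{n_2 p_{\max}}$ under $n_2 p_{\max}\gtrsim\log(n_1+n_2)$, so that step is unproblematic. The issue is with parts 2 and 3, where your chosen tool --- matrix Bernstein --- is systematically too weak.

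For part 3 you write $EZ_2=\sum_j E_{:j}(Z_2)_{j:}$ and propose matrix Bernstein. The matrix variance proxy indeed comes out to $\sigma^2 \lesssim n_1 n_2 p_{\max}/L$, but the matrix Bernstein inequality always attaches a factor $\sqrt{\log(n_1+L)}$ to $\sigma$, so the best you would obtain is $\norm{EZ_2}\lesssim\sqrt{(n_1 n_2 p_{\max}/L)\log n_1}$, which is $\sqrt{\log n_1}$ worse than the claimed $\sqrt{n_1 n_2 p_{\max}/L}$. There is no regime of the hypotheses where this extra factor can be absorbed: the log factor in matrix Bernstein is intrinsic, not a boundedness correction. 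This is exactly the point that makes part 3 nontrivial and, in fact, part 3 \emph{is} the ``new concentration inequality for matrices with independent centered Binomial entries'' advertised in the introduction (you attribute that credit to an auxiliary step in your proof of part 2, which is a misreading). The paper avoids the log loss by observing that $EZ_2$ has independent centered entries which are (differences of) Binomials, and then adapting the moment-comparison argument of Bandeira--Van Handel (the route through Corollaries 3.2--3.3 of that paper): one only needs to control moments of order $\asymp \log n_1$, and a Binomial moment bound shows these are dominated (after rescaling by $n_2 p_{\max}/L$) by Gaussian moments, so the sharp $\sqrt{n_1 n_2 p_{\max}/L}$ bound without a $\sqrt{\log}$ factor goes through. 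Matrix Bernstein cannot reproduce this.

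For part 2 the same objection applies to your option (i): a Bernstein-type sum $\sum_j(E_{:j}E_{:j}^\top - \expec E_{:j}E_{:j}^\top)$ again carries a $\sqrt{\log n_1}$ on the variance term, which spoils the $\sqrt{n_1 n_2}\,p_{\max}$ leading order. Your option (ii), deferring to ``machinery for products of random matrices,'' is the right instinct but not a proof; the paper invokes a specific result, Theorem 4 of Cai, Liang and Zhou (2020) for $\expec\norm{EE^\top-\expec EE^\top}$, together with Theorem 5 and Lemma 9 there to upgrade the expectation bound to a tail bound for bounded sub-Gaussian entries, with $\sigma_R^2\leq n_1 p_{\max}$, $\sigma_C^2\leq n_2 p_{\max}$. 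You would need to cite (or reprove) a result of that sharpness; a generic truncated Bernstein argument does not give it. On the last point of part 2, your discussion of the hollowing step is more elaborate than necessary: $\expec[EE^\top]$ is exactly diagonal (for $i\neq i'$, $\expec[(EE^\top)_{ii'}]=\sum_j\expec[E_{ij}]\expec[E_{i'j}]=0$), so $\calH(EE^\top)=\calH(EE^\top-\expec EE^\top)$ and $\norm{\calH(M)}\leq 2\norm{M}$ gives the conclusion immediately.
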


\begin{proof}
\begin{enumerate}
     \item The first inequality follows from classical proof techniques. We first convert $E$ into a square symmetric matrix as follows 
     \[\tilde{E}=
     \begin{pmatrix}
     0 & E\\
     E^\top &0
     \end{pmatrix}.
     \]
     It is easy to verify that $\norm{\tilde{E}}=\norm{E}$. Since $\tilde{E}$ is a square symmetric matrix with independent entries, we can use the result of Remark 3.13 in \cite{bandeira2016} and obtain with probability at least $1-n_1^{-\Omega(1)}$ that  \[ \norm{\tilde{E}}\lesssim \sqrt{n_2p_{max}} \] since $n_2p_{max}\gtrsim \log(n_1+ n_2)$. This condition is ensured since $n_2 \geq n_1$ and $n_2 p_{\max} \gtrsim \log n_2$.
     
     \item The second inequality follows from the recent work of \cite{Cai2020OnTN}. First observe that
    \[ ||\calH(EE^\top)|| \leq 2|| EE^\top -\expec(EE^\top)||\]
    since $||\calH(A)||\leq ||A||+||\diag(A)|| \leq 2||A||$ for all square matrices $A$ and $\calH(EE^\top) = \calH(EE^\top -\expec(EE^\top))$. Theorem 4 in \cite{Cai2020OnTN} shows that 
    \[ \expec (|| EE^\top -\expec(EE^\top)||)\lesssim \max(\sqrt{n_1n_2} p_{max}, \log n_1).\]
    It is however more difficult to get a high probability bound from this last inequality since we can no longer use Talagrand's inequality as in \cite{bandeira2016}. However, we can use  the moments to obtain a tail bound as in Theorem $5$ in \cite{Cai2020OnTN}. This theorem is stated for  matrices with Gaussian entries, but if instead of Lemma $1$ we use Lemma $9$ of \cite{Cai2020OnTN}, we obtain a similar result for bounded sub-Gaussian entries. Since the variance parameters $\sigma_R$ and $\sigma_C$ that appear in the statement of Theorem $5$ in \cite{Cai2020OnTN} satisfy $\sigma_R^2\leq n_1p_{max} $ and $\sigma_C^2\leq n_2p_{max} $ we obtain the result.
    
    \item The last inequality can be obtained by using the proof techniques in \cite{bandeira2016} as follows. In order to extend the concentration result from a matrix $Y$ with independent standard Gaussian entries to a matrix $X$ with symmetric sub-Gaussian entries, the key is to upper bound all the moments of $X_{ij}$ by moment of $Y_{ij}$. This can be done by using the boundedness of $Z_{ij}$ as in Corollary $3.2$, or the sub-Gaussian norm of $X_{ij}$ as in Corollary $3.3$ of \cite{bandeira2016}. But in our case, none of these bounds gives a good result. However, the proof of Theorem $1.1$ (and its extensions) only requires control of the moments of the order $\log n_1$.  For a Binomial r.v. $X$ with parameters $\rev{\alpha} n_2/L$ and $p_{max}$, we have, according to Theorem 1 in \cite{AHLE2022109306}, for all $c\in \mathbb{N} ^*$, 
    \[ \expec(X^c)\leq (\rev{\alpha} n_2p_{max}/L)^c e^{c^2/(2\rev{\alpha} n_2p_{max}/L)}. \]
    Let $X'$ be an independent copy of $X$. Since $\rev{\alpha} n_2p_{max}/L\gtrsim \log n_1$ by assumption, $e^{c^2/(2\rev{\alpha}n_2p_{max}/L)}\leq e^{\gamma c}$ for $c \asymp \log (n_1)$ and an absolute constant $\gamma>0$. Hence
    \[ \expec((X-X')^c)\leq \sum_{i\leq c}\binom{c}{i}\expec(X^i)\expec(X^{c-i}) \leq 2^c(\rev{\alpha} n_2p_{max}/L)^c e^{\gamma c}.\]
    Observe that $(2e^\gamma)^c\lesssim \expec Y_{ij}^{2c} = O((2c)^{c})$ for every $c$, so that for all even $c$ we have
    \[ \expec\left(\frac{L(X-X')}{2e^\gamma \rev{\alpha} n_2p_{max}}\right)^{c} \lesssim \expec Y_{ij}^{2c}. \]
    %
    We can now use the same argument as in Corollary $3.2$ of \cite{bandeira2016} to conclude that the matrix $M$ with independent entries generated with the same law as $X-X'$ satisfies with probability at least $1-O(n_1^{\Omega(1)})$ (since $\sqrt{\rev{\alpha}n_1n_2p_{max}/L} \gtrsim \log n_1$ by assumption)
    \[ \norm{M} \leq \sqrt{\rev{\alpha}n_1n_2p_{max}/L} .\]
    When the random variables are only centered, we can use the symmetrization argument of Corollary $3.3$ to finally obtain
    \[ || EZ_2^\top || \lesssim \sqrt{\rev{\alpha}n_1n_2p_{max}/L},\]
    with probability at least $1-O(n_1^{\Omega(1)})$. 
\end{enumerate}
\end{proof}

\begin{remark} The second concentration inequality of the above lemma slightly improves Proposition 1  and Theorem 4 in \cite{Ndaoud2021ImprovedCA}. The third concentration could be of independent interest and be applied  for example in multilayer network analysis where matrices with independent Binomial entries arise naturally as the sum of the adjacency matrices of the layers, see e.g. \cite{ paul_chen,Braun2021ClusteringMG}.
\end{remark}

\subsection{Control of the oracle error term}

\begin{lemma}\label{lem:oracle} Assume that the assumptions of Theorem \ref{thm:sbisbm} hold.  Recall that \[\ov = \left\lbrace  \tilde{E}_{i:}(W_{:z_i}-W_{:k}) \leq -(1-\delta)\Delta^2(z_i,k) \right\rbrace .\] We have for all $0<\delta\leq \frac{1}{4\eta \alpha}$, $k\neq z_i\in [K]$ and $i\in [n_1]$
 \[ \prob(\ov) \leq  e^{-\frac{ \beta^2 }{12eL\alpha^3} \frac{n_1n_2p_{max}^2}{KL}}=e^{-\tilde{\Delta}^2}.\]
\end{lemma}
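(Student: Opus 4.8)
The plan is to reduce $\prob(\ov)$ to a lower‑tail estimate for a sum of independent bounded random variables and then apply Bernstein's inequality. Set $v:=W_{:z_i}-W_{:k}\in\R^{n_1}$, so that $Z_1^\top v=e_{z_i}-e_k$, $\norm{v}_1=2$, $\norm{v}^2=|\calC_{z_i}|^{-1}+|\calC_k|^{-1}\le 2\alpha K/n_1$ and $\norm{v}_\infty\le\alpha K/n_1$. Since $\tilde{E}=\calH(EP^\top+PE^\top+EE^\top)$ and hollowing only removes the $j=i$ contribution, expanding row $i$ gives
\[
\tilde{E}_{i:}v\;=\;\underbrace{E_{i:}\mu}_{=:X_1}\;+\;\underbrace{A_{i:}c}_{=:X_2},\qquad \mu:=\sum_{j\neq i}v_j P_{j:}^\top,\quad c:=\sum_{j\neq i}v_j E_{j:}^\top,
\]
with $\mu\in\R^{n_2}$ deterministic and $c\in\R^{n_2}$ a function of $E_{-i:}$ only; equivalently $X_2=\sum_{j\neq i}v_j\langle A_{i:},E_{j:}\rangle=\sum_{(j,l):\,A_{il}=1}v_j E_{jl}$. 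Because $\tilde{P}_{i:}v=\Delta^2(z_i,k)$, the event $\ov$ is $\{X_1+X_2\le-(1-\delta)\Delta^2(z_i,k)\}$, and on $\{|X_1|\le\frac{\delta}{4}\Delta^2(z_i,k)\}$ this forces $X_2\le-(1-\frac{3\delta}{4})\Delta^2(z_i,k)$. So it is enough to bound $\prob(|X_1|>\frac{\delta}{4}\Delta^2(z_i,k))$ and $\prob(X_2\le-(1-\frac{3\delta}{4})\Delta^2(z_i,k))$ separately.

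The first is straightforward: $X_1=\sum_l E_{il}\mu_l$ is a sum of independent centered variables with $|E_{il}\mu_l|\le|\mu_l|\le p_{\max}\norm{v}_1=2p_{\max}$ and total variance at most $p_{\max}\norm{\mu}^2\le p_{\max}\,v^\top PP^\top v\le p_{\max}\norm{PP^\top}\norm{v}^2\lesssim \alpha^3 n_2 p_{\max}^3/L$, so Bernstein together with \eqref{eq:delta_zik} yields $\prob(|X_1|>\frac{\delta}{4}\Delta^2(z_i,k))\le e^{-\Omega(n_2 p_{\max})}=o(e^{-\tilde{\Delta}^2})$ in the present regime (where $n_1 p_{\max}=o(1)$, so $n_2 p_{\max}$ dominates $\tilde{\Delta}^2$). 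For $X_2$ I would condition on the row $A_{i:}$; then $X_2=\sum_{(j,l):A_{il}=1}v_j E_{jl}$ is a sum of independent centered variables bounded by $\norm{v}_\infty\le\alpha K/n_1$, with conditional variance $\sum_{(j,l):A_{il}=1}v_j^2 p_{jl}(1-p_{jl})\le\sum_{j\neq i}v_j^2\bigl(\sum_{l:A_{il}=1}p_{jl}\bigr)$. Since $p_{jl}$ depends on $j$ only through $z_j$, the inner sum equals $\sum_l A_{il}p_{il}$ when $j\in\calC_{z_i}$ and $\sum_l A_{il}p_{j_k l}$ for a fixed $j_k\in\calC_k$ when $j\in\calC_k$; these are sums of independent $[0,p_{\max}]$‑valued variables with means $(PP^\top)_{ii}=Q_{z_iz_i}$ and $Q_{z_ik}$ respectively, both of order at most $\alpha n_2 p_{\max}^2/L$ by Assumption \ref{ass:eig_low_bd} (via $Q_{z_iz_i}\le\frac{\alpha n_2}{L}\norm{\Pi\Pi^\top}$). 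I then introduce the $A_{i:}$‑measurable good event $\calG_i$ on which these two sums are at most $2Q_{z_iz_i}$ and $2Q_{z_ik}$; a Chernoff bound gives $\prob(\calG_i^c)\le e^{-\Omega(n_2 p_{\max}/L)}=o(e^{-\tilde{\Delta}^2})$.

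On $\calG_i$ the conditional variance of $X_2$ is at most $2(Q_{z_iz_i}+Q_{z_ik})\norm{v}^2\lesssim\frac{\alpha^3 K}{\beta n_1}\Delta^2(z_i,k)$, using $Q_{z_iz_i}+Q_{z_ik}\lesssim\alpha n_2 p_{\max}^2/L$ together with the lower bound $\Delta^2(z_i,k)\ge\beta n_2 p_{\max}^2/(\alpha L)$ from \eqref{eq:delta_zik}. Feeding this variance bound, the boundedness $\norm{v}_\infty\le\alpha K/n_1$, and $s=(1-\frac{3\delta}{4})\Delta^2(z_i,k)$ into Bernstein's inequality — and checking the two terms of its denominator, which for $\delta\le1/(4\eta\alpha)$ leaves (whether the variance term or the linear term dominates) an exponent of order $\frac{\beta n_1\Delta^2(z_i,k)}{\alpha^3 K}$ up to an absolute constant — yields, for every realization of $A_{i:}$ in $\calG_i$,
\[
\prob\bigl(X_2\le-(1-\tfrac{3\delta}{4})\Delta^2(z_i,k)\,\big|\,A_{i:}\bigr)\;\le\;\exp\!\Bigl(-c\,\frac{\beta\,n_1\,\Delta^2(z_i,k)}{\alpha^3 K}\Bigr)\;\le\;\exp\!\Bigl(-c'\,\frac{\beta^2 n_1 n_2 p_{\max}^2}{\alpha^4 K L}\Bigr)
\]
for absolute constants $c,c'>0$. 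Taking the expectation over $A_{i:}\in\calG_i$ and adding back $\prob(\calG_i^c)$ and $\prob(|X_1|>\frac{\delta}{4}\Delta^2(z_i,k))$, both $o(e^{-\tilde{\Delta}^2})$, gives $\prob(\ov)\le e^{-\tilde{\Delta}^2}$; the precise constant $\frac{1}{12e}$ in $\tilde{\Delta}^2$ (carrying an extra factor $1/L$ relative to the exponent above) is a conservative bookkeeping choice that leaves room for $c'$ and for the lower‑order terms.

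The step I expect to be the main obstacle is the conditional‑variance estimate for $X_2$: the naive bound $\sum_{l:A_{il}=1}p_{jl}\le p_{\max}d_i\asymp n_2 p_{\max}^2$ (with $d_i=\sum_l A_{il}$) loses a factor $L$ and is not enough to reach $\tilde{\Delta}^2$, so one genuinely needs both the good event $\calG_i$ and the spectral bound $\norm{\Pi\Pi^\top}\lesssim p_{\max}^2$ of Assumption \ref{ass:eig_low_bd} to recover that factor, and then Assumption \ref{ass:diag_dom} through \eqref{eq:delta_zik} to convert the bound into a clean multiple of $\Delta^2(z_i,k)$. Everything else — the tail of $X_1$, the concentration of $d_i$, and the probability of $\calG_i$ — is a routine Chernoff/Bernstein computation producing only negligible terms of the form $e^{-\Omega(n_2 p_{\max})}$, $e^{-\Omega(n_2 p_{\max}/L)}$ or $n_1^{-\omega(1)}$.
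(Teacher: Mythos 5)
Your proposal is correct in substance but takes a genuinely different route from the paper's proof. The paper rewrites the event in terms of $B_{i:}(W_{:k}-W_{:z_i}) = \langle A_{i:},\,\tilde{A}_k - \tilde{A}_{z_i}\rangle$ (with $\tilde{A}_k,\tilde{A}_{z_i}$ the community-averaged rows of $A$, independent of $A_{i:}$), conditions on $A_{i:}$, and carries out a single Chernoff/moment-generating-function computation with the specific parameter $t^*=\epsilon n_1/(\alpha K)$, $\epsilon=\beta/(3eL\alpha)$; integrating the resulting $\prod_j\expec(e^{t_{ij}A_{ij}})$ over $A_{i:}$ then directly produces the explicit exponent $\tilde{\Delta}^2$. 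You instead split $\tilde{E}_{i:}v$ into a deterministic linear form $X_1=E_{i:}\mu$ and a bilinear term $X_2=A_{i:}c$, union-bound the two tails, apply Bernstein to each, and introduce an $A_{i:}$-measurable good event $\calG_i$ to control the conditional variance of $X_2$. This is more modular and elementary but loses constants in several places (a Bernstein constant in the $X_2$ bound, an extra power of $\alpha$, and the need for the $X_1$ and $\calG_i^c$ tails to be dominated), all of which you correctly observe are absorbed by the deliberately small constant $\frac{1}{12e}$ and extra factor $1/L$ in $\tilde{\Delta}^2$. The paper's single MGF computation is tighter and delivers the stated constant exactly, whereas your route requires verifying negligibility of auxiliary terms in the stated regime.

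Two small technical points worth repairing. First, your good event requires $\sum_{l:A_{il}=1}p_{j_k l}\le 2Q_{z_ik}$, but $Q_{z_ik}$ can be arbitrarily small under the assumptions (Assumption \ref{ass:diag_dom} only bounds $(\Pi\Pi^\top)_{kk'}$ from above, not below); when $Q_{z_ik}\ll p_{\max}$ a multiplicative Chernoff bound against the threshold $2Q_{z_ik}$ is not useful. The fix is cosmetic: define $\calG_i$ with the \emph{additive} threshold $C\alpha n_2 p_{\max}^2/L$ (which dominates $Q_{z_ik}$ by Assumption \ref{ass:eig_low_bd}); a one-sided Bernstein bound for $[0,p_{\max}]$-valued summands then gives $\prob(\calG_i^c)\le e^{-\Omega(n_2 p_{\max}/L)}$ uniformly in $Q_{z_ik}$, after which the conditional-variance estimate proceeds as you wrote it. Second, the $X_1$ tail exponent is $\Omega(\delta^2\beta^2 n_2 p_{\max}/(\alpha^5 L))$ rather than $\Omega(n_2 p_{\max})$; since $\delta=\Theta(1/(\eta\alpha))$ and all of $\eta,\alpha$ are treated as bounded, it is still $o(e^{-\tilde{\Delta}^2})$ because $n_1 p_{\max}=o(KL)$ in the high-dimensional regime, but the dependence should be stated as $e^{-\Omega(n_2 p_{\max}/L)}$ for accuracy.
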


\begin{proof}
The event $\ov$ holds if and only if \[ B_{i:}(W_{:k}-W_{:z_i}) \geq -\delta \Delta^2(z_i,k).\] But we can decompose this quantity as \begin{align*}
     B_{i:}(W_{:k}-W_{:z_i})&= \sum_{j\neq i} B_{ij}(W_{jk}-W_{jz_i})\\
    &= |\calC_k|^{-1}\sum_{j\in \calC_k\setminus \lbrace i \rbrace }\langle A_{i:},A_{j:} \rangle-|\calC_{z_i}|^{-1}\sum_{j\in \calC_{z_i}\setminus \lbrace i \rbrace }\langle A_{i:},A_{j:} \rangle\\
    &=\langle A_{i:}, \tilde{A}_k -\tilde{A}_{z_i} \rangle
\end{align*}
 where $\tilde{A}_k =  \frac{1}{|\calC_k|}\sum_{j\in \calC_k  } A_{j:}$ and $\tilde{A}_{z_i} = \frac{1}{|\calC_{z_i}|} \sum_{j\in \calC_{z_i}\setminus \lbrace i \rbrace  } A_{j:}$ are independent random variables. Since the index $i$ doesn't appear in the second sum, $\tilde{A}_{z_i}$ is independent from $A_{i:}$. By definition the entries of $\tilde{A}_k$ and $\tilde{A}_{z_i}$ are independent normalized binomial random variables whose parameters vary depending on the community associated with the entry.

 
We are now going to bound the moment generating function of $M:=\langle A_{i:}, \tilde{A}_k -\tilde{A}_{z_i} \rangle$, conditionally on $A_{i:}$. Observe that $M$ is a sum of independent random variables. Recall that if $X\overset{}{\sim} \mathcal{B}(p)$ then $\expec(e^{tX})=(e^tp+1-p)$. Hence, for $t>0$, conditionally on $A_{ij}$, each summand has a m.g.f equal to \begin{align*}
 \log \expec (e^{tA_{ij}(\tilde{A}_{kj}-\tilde{A}_{z_ij})}|A_{ij}) &= |\calC_{k}|\log(e^{\frac{tA_{ij}}{|\calC_{k}|}}\Pi_{k\zz_{j}}+1-\Pi_{k\zz_{j}})+(|\calC_{z_i}|-1)\log(e^{-\frac{tA_{ij}}{|\calC_{z_i}|}}\Pi_{z_i\zz_{j}}+1-\Pi_{z_i\zz_{j}}) \\
&\leq |\calC_{k}|\Pi_{k\zz_{j}}(e^{\frac{tA_{ij}}{|\calC_{k}|}}-1)+(|\calC_{z_i}|-1)\Pi_{z_i\zz_{j}}(e^{-\frac{tA_{ij}}{|\calC_{z_i}|}}-1)    
\end{align*} by using the fact that $\log(1+x) \leq x$ for all $x>-1$. 

Fix $t=t^*=\epsilon \frac{n_1}{\alpha K}$ for a parameter $\epsilon \in (0,1)$ that will be fixed later. We have by Taylor Lagrange formula for all $t\leq \frac{n_1}{\alpha K}$  \[e^{\frac{t}{|\calC_{k}|}}-1 \leq \frac{t}{|\calC_{k}|}+\frac{e}{2}\left(\frac{t}{|\calC_{k}|}\right)^2\] and  \[ e^{\frac{-t}{|\calC_{z_i}|}}-1 \leq -\frac{t}{|\calC_{z_i}|}+\frac{e}{2}\left(\frac{t}{|\calC_{z_i}|}\right)^2 .\]
By using these upper bounds we get \[ \log \expec (e^{t^*A_{ij}(\tilde{A}_{kl}-\tilde{A}_{z_ij})}|A_{ij}) \leq \left(\Pi_{k\zz_{j}}-\frac{|\calC_{z_i}|-1}{|\calC_{z_i}|}\Pi_{z_i\zz_{j}}\right)t^*A_{ij}+\frac{e}{2}(t^*)^2A_{ij}\left( \frac{\Pi_{k\zz_{j}}}{|\calC_k|}+\frac{\Pi_{z_i\zz_{j}}}{|\calC_{z_i}|}\right). \]
Hence by using independence we obtain \[ \log \expec(e^{t^*M}|A_{i:})\leq \sum_{j\in [n_2]}\left[ \underbrace{\left(\Pi_{k\zz_{j}}-\frac{|\calC_{z_i}|-1}{|\calC_{z_i}|}\Pi_{z_i\zz_{j}}\right)t^*+\frac{e}{2}(t^*)^2\left( \frac{\Pi_{k\zz_{j}}}{|\calC_k|}+\frac{\Pi_{z_i\zz_{j}}}{|\calC_{z_i}|}\right)}_{t_{ij}}\right]A_{ij}.\]
Using Markov inequality  and  the fact that $\Delta^2(z_i,k)\leq \eta \beta \alpha n_2p_{max}^2/L$ by  \eqref{eq:delta_zik} leads to \[\rev{\expec }\left(\prob(\ov|A_{i:})\right)\leq e^{\delta \epsilon \frac{n_1}{\alpha K}\Delta^2(z_i,k)}\rev{\expec}\left(\expec(e^{t^*M}|A_{i:})\right) \leq e^{\delta \epsilon \frac{\beta n_1n_2p_{max}^2}{ LK}} \prod_j\expec(e^{t_{ij}A_{ij}}).\]
But since $A_{ij}$ is a Bernoulli random variable with parameter $\Pi_{z_iz_{2j}}$ we have \[ \expec(e^{t_{ij}A_{ij}}) = (e^{t_{ij}}-1)\Pi_{z_iz_{2j}}+1.\]
By our choice of $t^*$, $t_{ij}=O(n_1p_{max})=o(1)$ so that \[ \prod_j\left((e^{t_{ij}}-1)\Pi_{z_iz_{2j}}+1\right)\leq e^{\sum_j (e^{t_{ij}}-1)\Pi_{z_iz_{2j}}}.\] Here we use the fact that for $x_1, \ldots, x_n>-1$ we have $\prod_{i\in [n]} (1+x_i)\leq e^{\sum_{i\in [n]}x_i}$.
But again, by using Taylor Lagrange formula, we have $e^{t_{ij}}-1=t_{ij}+O(t_{ij}^2)$. Consequently \[ \prod_j\expec(e^{t_{ij}A_{ij}}) \leq e^{\sum_j (t_{ij}+o(t_{ij}))\Pi_{z_iz_{2j}}}.\]
We can write $\sum_j t_{ij}\Pi_{z_iz_{2j}}$ as \[ t^* \underbrace{\sum_{l\in [L]} |\calC_{l}'|\left( \Pi_{z_il}\Pi_{kl} - \Pi_{z_il}^2\left(\frac{|\calC_{z_i}|-1}{|\calC_{z_i}|}\right)\right)}_{A1}+\frac{e(t^*)^2}{2}\underbrace{\sum_{l\in [L]}|\calC_{l}'|\left( \frac{ \Pi_{z_il}\Pi_{kl}}{|\calC_k|}+ \Pi_{z_il}^2\left(\frac{|\calC_{z_i}|-1}{|\calC_{z_i}|^2}\right)\right)}_{A_2}.\]
By Assumption \ref{ass:balanced_part} and \ref{ass:diag_dom}  we have \[ -A_1 \geq \frac{n_2}{\alpha L}\left(\beta p_{max}^2 + o(p_{max}^2)\right)\] and
\[ A_2 \leq \frac{\alpha K}{n_1}n_2 p_{max}^2.\]
Let $\epsilon =  \frac{ \beta}{3eL\alpha}$ (recall that $t^*=\epsilon \frac{n_1}{\alpha K}$). By this choice of $\epsilon$ we have \[ \frac{e\epsilon n_1}{2\alpha K}A_2 \leq \frac{|A_1|}{2}.\]

Consequently  we have for all $\delta \leq 1/(4\eta \alpha)$
\[ \prob(\ov)\leq e^{-\frac{\epsilon \beta }{2\alpha^2} \frac{n_1n_2p_{max}^2}{KL}+\delta \epsilon \frac{n_1}{\alpha K}\Delta^2(z_i,k)} \leq e^{-\frac{\epsilon \beta }{4\alpha^2} \frac{n_1n_2p_{max}^2}{KL}}\leq  e^{-\frac{ \beta^2 }{12eL\alpha^3} \frac{n_1n_2p_{max}^2}{KL}}.\] 
\end{proof}

\subsection{Proof of Lemma \ref{lem:cond_prob_minimax}}
\label{app:proof_lem_1}

The first inequality is a direct consequence of Hoeffding concentration inequality. For the second inequality, observe that  \[ \prob_{A_{-i:}|z_{-i}}(\calE_2)=\expec_{\zz}\prob_{A_{-i:}|z_{-i},\zz}(\calE_2) \] and 
\[\prob_{A_{-i:}|z_{-i},\zz}\left(\sum_{i'\in \calC_+}A_{i'j}-\sum_{i'\in \calC_-}A_{i'j}= 0\right)\geq \prob (A_{i'j}=0 \text{ for all }i')\gtrsim (1-p)^{n_1}=1-n_1p.\]
 To obtain an upper bound on this probability, we can use Paley-Zigmund inequality as follows. Let us denote $Z=|\sum_{i'\in \calC_+}A_{i'j}-\sum_{i'\in \calC_-}A_{i'j}|$. We then have \begin{align*}
     \prob_{A_{-i:}|z_{-i},\zz} (Z=0)=1-\prob_{A_{-i:}|z_{-i},\zz}(Z>0)&\leq 1- \prob_{A_{-i:}|z_{-i},\zz}(Z>\theta \expec Z)\\ &\leq 1-(1-\theta)^2\frac{(\expec_{A_{-i:}|z_{-i},\zz}( Z))^2}{\expec_{A_{-i:}|z_{-i},\zz} (Z^2)}
 \end{align*}
 for $\theta \in (0,1)$ by Paley-Zigmund inequality. It is easy to check that \[\expec_{A_{-i:}|z_{-i},\zz} Z\gtrsim n_1(p-q) \text{ and } \expec_{A_{-i:}|z_{-i},\zz}(Z^2)\asymp n_1p.\]  Consequently, $\prob_{A_{-i:}|z_{-i},\zz}(Z=0)= 1-\Theta(n_1p)$. By using independence over $j$ and Chernoff's multiplicative bound we obtain that \[ \prob_{A_{-i:}|z_{-i},\zz}\left(\sum_j \indic_{\lbrace \sum_{i'\in \calC_+}A_{i'j}-\sum_{i'\in \calC_-}A_{i'j}\neq 0 \rbrace} = \Theta( n_2n_1p)\right) \geq 1-e^{-cn_2n_1p}\] for any realization of $z_{-i}$ and $z'$. Hence the stated bound follows.
 
 Finally it remains to control the probability of $\calE_3$. Under $\tilde{\prob}_{A_{i:}|A_{-i:},z}$, for each $j\in J_b$, $A_{ij}$ are independent and distributed as a mixture of Bernoulli of parameters $p$ and $q$. The size of the set $J_b$ is also controlled by the assumption $A_{-i:}\in \calE_2$ (see equation \eqref{eq:jb}). Since under $\tilde{\prob}_{A_{i:}|A_{-i:},z}$ for all $j\in J_b$, $A_{ij}$ are independent r.v. with density $g$, we obtain by stochastic domination (we can replace a mixture of Bernoulli with parameters $p$ and $q$ by a Bernoulli of parameter $p$) and Chernoff's bound that $\tilde{\prob}_{A_{i:}|A_{-i:},z}(\calE_3)\geq 1-e^{-\Theta(n_2p)}$.

\end{document}